\numberwithin{equation}{section}
\newtheorem{Theorem}{Theorem}[section]
\newtheorem{Corollary}[Theorem]{Corollary}
\newtheorem{Conjecture}[Theorem]{Conjecture}
\newtheorem{Lemma}[Theorem]{Lemma}
\newtheorem{Proposition}[Theorem]{Proposition}
{ \theoremstyle{definition}
 \newtheorem{Definition}[Theorem]{Definition}
 \newtheorem{Example}[Theorem]{Example}
 }
\DeclareMathOperator{\Wr}{Wr}
\DeclareMathOperator{\He}{He}
\DeclareMathOperator{\sgn}{sgn}
\DeclareMathOperator{\Hor}{Hor}
\DeclareMathOperator{\Ver}{Ver}
\begin{document}

\newcommand{\arXivNumber}{1801.07980}

\renewcommand{\thefootnote}{}

\renewcommand{\PaperNumber}{048}

\FirstPageHeading

\ShortArticleName{Recurrence Relations for Wronskian Hermite Polynomials}

\ArticleName{Recurrence Relations\\ for Wronskian Hermite Polynomials\footnote{This paper is a~contribution to the Special Issue on Orthogonal Polynomials, Special Functions and Applications (OPSFA14). The full collection is available at \href{https://www.emis.de/journals/SIGMA/OPSFA2017.html}{https://www.emis.de/journals/SIGMA/OPSFA2017.html}}}

\Author{Niels BONNEUX and Marco STEVENS}
\AuthorNameForHeading{N.~Bonneux and M.~Stevens}
\Address{Department of Mathematics, University of Leuven,\\ Celestijnenlaan 200B box 2400, 3001 Leuven, Belgium}
\Email{\href{mailto:niels.bonneux@kuleuven.be}{niels.bonneux@kuleuven.be}, \href{mailto:marco.stevens@kuleuven.be}{marco.stevens@kuleuven.be}}

\ArticleDates{Received January 25, 2018, in final form May 09, 2018; Published online May 16, 2018}

\Abstract{We consider polynomials that are defined as Wronskians of certain sets of Hermite polynomials. Our main result is a recurrence relation for these polynomials in terms of those of one or two degrees smaller, which generalizes the well-known three term recurrence relation for Hermite polynomials. The polynomials are defined using partitions of natural numbers, and the coefficients in the recurrence relation can be expressed in terms of the number of standard Young tableaux of these partitions. Using the recurrence relation, we provide another recurrence relation and show that the average of the considered poly\-no\-mials with respect to the Plancherel measure is very simple. Furthermore, we show that some existing results in the literature are easy corollaries of the recurrence relation.}

\Keywords{Wronskian; Hermite polynomials; partitions; recurrence relation}

\Classification{05A17; 12E10; 26C05; 33C45; 65Q30}

\renewcommand{\thefootnote}{\arabic{footnote}}
\setcounter{footnote}{0}

\section{Introduction}\label{sectionintroduction}

Over the last decade, the theory of exceptional orthogonal polynomials emerged as a new research domain in classical analysis \cite{GomezUllate_Kamran_Milson-09,Odake_Sasaki-09}. These polynomials are defined just like classical orthogonal polynomials, albeit with the possibility that for a finite number of degrees no polynomial exists. In the primary examples, these exceptional orthogonal polynomials appear as Wronskians of a~set of classical orthogonal polynomials \cite{Szego}, and as such are called exceptional Hermite \cite{Duran-Hermite,GomezUllate_Grandati_Milson-14}, exceptional Laguerre \cite{Bonneux_Kuijlaars,Duran-Laguerre} and exceptional Jacobi \cite{Duran-Jacobi} polynomials.

An important property of the classical orthogonal polynomials is that they satisfy a three term recurrence relation, hence it is interesting to see whether exceptional orthogonal polynomials also do. The main issue in this regard is the fact that for exceptional orthogonal polynomials there is not a polynomial of every degree. Several papers are published concerning new recurrence relations for exceptional polynomials but none of these give general explicit formulas for the coefficients in the relation. Most work was done for exceptional Hermite polynomials, see for example \cite{Gomez-Ullate_Kasman_Kuijlaars_Milson} and the references therein.

In this paper, we focus on the class of exceptional Hermite polynomials. In fact, we forget about the orthogonality structure, and simply consider the polynomials that are defined as the Wronskian of an appropriate set of Hermite polynomials. Those appropriate sets are labeled by partitions of natural numbers and we refer to the resulting polynomials as \emph{Wronskian Hermite polynomials}. These polynomials also appear as building blocks in rational solutions of the Painlev\'e~IV equation \cite{Clarkson-3,Clarkson-1,Clarkson-2,Noumi_Yamada,Okamoto}.

We establish a generating recurrence relation with explicit coefficients for these Wronskian Hermite polynomials, see Theorem \ref{thm:RecurrenceRelation}. These coefficients are well-known objects in the combinatorics of partitions, namely the number of standard Young tableaux \cite{Young-1900,Young-1928}. Hence, this work leads us to believe that the use of partitions in the description of exceptional orthogonal polynomials is indeed the natural setting. Furthermore, when interpreting the Hermite polynomials in the Wronskian setting, we recover the original recurrence relation for Hermite polynomials.

Besides a recurrence relation, we also present two other main results concerning Wronskian Hermite polynomials. Firstly, there is another recurrence relation as described in Theorem~\ref{thm:topdownrecurrence}, which does not generate all the polynomials, but which does reveal some of the structure of these polynomials. Secondly, we present a result concerning taking averages of Wronskian Hermite polynomials. Namely, fix a degree and consider all partitions of this degree. Then the average with respect to the Plancherel measure \cite{Baik_Deift_Suidan} of the Wronskian Hermite polynomials associated to these partitions, is precisely the monomial of the degree we started with, see Theorem \ref{thm:Average}.

We also show that the generating recurrence relation can be used to prove some of the results in the existing literature in a very easy manner, whereas they used to have rather involved proofs. Precisely for this reason, we believe that the recurrence relation we provide will help to derive new results concerning Wronskian Hermite polynomials. For example, it is conjectured that all the non-zero roots of these polynomials are simple \cite{Felder_Hemery_Veselov,GarciaFerrero_GomezUllate,Kuijlaars_Milson}; perhaps this recurrence relation can inspire progress in this regard. Moreover, our recurrence relation provides a way to inductively prove explicit formulas for the coefficients of the polynomials.

In the existing literature, Wronskian Hermite polynomials are defined using classical Hermite polynomials~\cite{Szego}. In this paper, we use a slightly modified version of Hermite polynomials (that are sometimes referred to as \emph{probabilists' Hermite polynomials}~\cite{O'Donnell}) instead of the classical polynomials. The reason for doing this lies in the fact that when using the classical Hermite polynomials, there is not one natural choice of normalization for which all of our results hold. However, all of our results do have an interpretation in terms of Wronskians of classical Hermite polynomials. See Section~\ref{classicalHermite} for a detailed discussion.

\looseness=-1 This paper is constructed as follows. We start with some preliminaries about (Wronskian) Hermite polynomials, partitions and related concepts. In Section~\ref{sectionmainresults} we state our results: the two recurrence relations for Wronskian Hermite polynomials (Theorem~\ref{thm:RecurrenceRelation} and Theorem~\ref{thm:topdownrecurrence}) and the average with respect to the Plancherel measure (Theorem~\ref{thm:Average}). Moreover we add known results for Wronskian Hermite polynomials which follow straightforwardly from the recurrence relation. Section~\ref{sectionnotesandremarks} contains a discussion on the use of classical Hermite polynomials and we discuss some notes and remarks about the recurrence relation. We also state some special cases that are connected to primary examples in the existing literature. The proof of the generating recurrence relation is given in Section~\ref{sectionproofrecursion}. Subsequently, Section~\ref{sectiontopdownrecurrence} contains the proof of the second recurrence relation. In Section~\ref{sectionproofaverage} we prove the result regarding the average polynomial. In Section~\ref{sectionproofderivative}, we give a proof of an explicit expression for the derivative of Wronskian Hermite polynomials, which is one of the corollaries of the recurrence relation, see Proposition~\ref{prop:OmegaDer}. Finally, we added an appendix that contains proofs of technical lemmas that are needed in the proofs of our main results.

\section{Preliminaries}\label{sectionpreliminaries}
In this section we present all the necessary concepts and notation for the results in the next sections.

\subsection{Hermite polynomials}\label{subsectionstandardhermitepolynomials}
One of the most studied sets of polynomials is the set of Hermite polynomials. They arise in a natural way in many different fields; to name a few of the primary examples, they appear as the polynomial part of the eigenfunctions of the (quantum) harmonic oscillator but also in the statistics of normal distributions. Here, we use the polynomials denoted by~$\He_n$. These polynomials are sometimes referred as the \emph{probabilists'} Hermite polynomials~\cite{O'Donnell}, due to their use in the study of the normal distribution. Many published works are devoted to the study of Hermite polynomials, but for our purpose we introduce them by means of their recurrence relation \cite[formula~(22.7.14)]{Abramowitz_Stegun}.

We define $\He_0(x)=1$ and $\He_1(x)=x$. Furthermore, we recursively define
\begin{gather}\label{eq:recursionforstandardhermite}
\He_n(x)=x\He_{n-1}(x)-(n-1)\He_{n-2}(x), \qquad n\geq 2.
\end{gather}
These polynomials satisfy the orthogonality relations \cite[formula~(22.2.15)]{Abramowitz_Stegun}
\begin{gather*}
\frac{1}{\sqrt{2\pi}} \int_{-\infty}^{\infty} \He_n(x)\He_m(x) e^{-\frac{x^2}{2}}{\rm d}x= n! \delta_{n,m},
\end{gather*}
where $\delta_{n,m}$ is the Kronecker delta symbol. Indeed, these polynomials are orthogonal with respect to the standard normal distribution on the real line.

We note that this definition of Hermite polynomials is not the most used form in classical analysis \cite{Szego}; the \emph{classical Hermite polynomials} are often denoted by $H_n$. For more details, see Section~\ref{classicalHermite} where we translate the upcoming results in terms of classical Hermite polynomials.

\subsection{Partitions and standard Young tableaux}\label{subsectionpartitions}
A \textit{partition} $\lambda$ of a natural number $n$, denoted by $\lambda \vdash n$, is a sequence of natural numbers $\left(\lambda_i\right)_{i=1}^r$ such that $\lambda_1\geq \lambda_2 \geq \dots \geq \lambda_r>0$ and $\sum\limits_{i=1}^r \lambda_i=n$. We denote $\lvert \lambda \rvert=n$ and call $r$ the \emph{length} of the partition $\lambda$. A standard reference for the theory of partitions is the book of Andrews \cite{Andrews}.

Partitions can be visualized by \emph{Young diagrams}: for a partition $\lambda=(\lambda_1,\dots,\lambda_r)$, we draw a~diagram of $r$ rows, where the $i$'th row consists of $\lambda_i$ boxes \cite{Young-1900,Young-1928}. For example, for the partition $\lambda=(4,2,1)$, the Young diagram looks like:
\begin{center}
 \ydiagram{4,2,1}
\end{center}

For a partition $\lambda$, the \emph{conjugate partition} $\lambda'$ is defined by
\begin{gather*}
\lambda'_j=\# \{i \,|\, \lambda_i\geq j\}, \qquad j=1,\dots,\lambda_1.
\end{gather*}
On the level of Young diagrams, this means that one reflects the Young diagram in the diagonal, as shown in the following diagram.
\begin{center}
 \ydiagram{4,2,1}
 $\qquad \xrightarrow{\text{conjugation}} \qquad$
 \ydiagram{3,2,1,1}
\end{center}

Building further upon this visualization, if $\lambda$ is a partition of $n$, we can fill the boxes of the associated Young diagram of \emph{shape} $\lambda$ by the integers $1$ to $n$ in such a way that the numbers in every row and column are increasing. For example, for the Young diagram above of shape $(4,2,1)$, we can have:
\begin{center}
 \begin{ytableau}
 1 & 3 & 4 & 7\\
 2 & 6 \\
 5
 \end{ytableau}
\end{center}
Every possible way of filling the Young diagram under these constraints is called a \emph{standard Young tableau}. The number of standard Young tableaux of shape $\lambda$ is denoted by $F_\lambda$ \cite[Chapter~3]{Baik_Deift_Suidan}. By convention, we set $F_{\varnothing}=1$ where $\varnothing$ refers to the partition of 0.

These numbers $F_\lambda$ have an interpretation in the setting of the partial ordering on all partitions. This ordering is (canonically) defined as
\begin{gather*}\mu \leq \lambda \iff \mu_i \leq \lambda_i \quad \textrm{for all} \ i,\end{gather*}
where we set $\lambda_i=0$ if $i$ is bigger than the length of $\lambda$. Note that on the level of Young diagrams, $\mu\leq \lambda$ precisely means that $\mu$ is a subdiagram of $\lambda$.

There are explicit formulas available for $F_\lambda$. For this, we first need to define the \emph{degree sequence} associated to a partition. Namely, if we let $\lambda=(\lambda_1,\dots,\lambda_r)$ be a partition, then
\begin{gather}\label{eq:DegreeSequence}
n_\lambda:=(\lambda_1+r-1,\lambda_2+r-2,\dots,\lambda_r)
\end{gather}
is called the \textit{degree sequence} of $\lambda$. Indeed, we have $(n_{\lambda})_i=\lambda_i+r-i$ for $i=1,\dots,r$. Therefore the degree sequence is strictly decreasing (whereas the partition is weakly decreasing). If there is no ambiguity which partition is associated to the degree sequence, we simply write~$n_i$ instead of~$(n_\lambda)_i$. Subsequently, we denote the \textit{Vandermonde determinant} of the degree sequence of~$\lambda$ by~$\Delta(n_{\lambda})$, i.e.,
\begin{gather*}
\Delta(n_{\lambda}) = \prod_{1\leq i < j \leq r} (n_j-n_i).
\end{gather*}
Using this notation, the explicit formula for $F_\lambda$ that is used most often in this article is
\begin{gather}\label{eq:Flambda}
F_{\lambda} = (-1)^{\frac{r(r-1)}{2}} \frac{|\lambda|! \Delta(n_{\lambda})}{\prod\limits_{i=1}^{r}n_i!}.
\end{gather}
Note that the sign of the Vandermonde determinant is precisely given by $(-1)^{\frac{r(r-1)}{2}}$ and therefore the right hand side of~\eqref{eq:Flambda} is positive. There are also two other well-known formulas for $F_\lambda$, i.e., a~determinantal formula and a hook formula. Proofs and comments can be found in \cite[Section~3.2]{Baik_Deift_Suidan}.

The numbers $F_\lambda$ have a further remarkable property. Namely, for any fixed natural num\-ber~$n$, we have
\begin{gather*}
\sum_{\lambda \vdash n} F_\lambda^2 = n!,
\end{gather*}
see \cite[formulas (1.30) and~(3.17)]{Baik_Deift_Suidan}. Therefore, we find a probability distribution on the set $\{\lambda \colon \lambda \vdash n\}$ where every partition $\lambda$ has weight $\frac{F_\lambda^2}{n!}$. This probability distribution is called the \textit{Plancherel measure} \cite{Baik_Deift_Suidan}.

\subsection{Wronskian Hermite polynomials}\label{subsectiongeneralizedhermitepolynomials}
For a set of (sufficiently differentiable) functions $\{f_i\}_{i=1}^r$, the \textit{Wronskian} $\Wr[f_1,\dots,f_r]$ is defined as the determinant of the $r\times r$ matrix $M$, where
\begin{gather*}
M_{ij}=f_j^{(i-1)}, \qquad 1\leq i,j\leq r.
\end{gather*}
In the setting where all the functions $\{f_i\}_{i=1}^r$ are polynomials, we have the following well-known result in terms of partitions. This can be proven directly by evaluating the determinant.

\begin{Lemma}\label{lem:WronskianPolynomial}
 Suppose that $\lambda$ is a partition with degree sequence $n_{\lambda}$ and let $P_1,\dots,P_r$ be monic polynomials such that $\deg P_i = n_i$ for $i=1,\dots,r$. Then the Wronskian $\Wr[P_1,\dots,P_r]$ is a~polynomial of degree $|\lambda|$ and its leading coefficient is the Vandermonde determinant $\Delta(n_{\lambda})$.
\end{Lemma}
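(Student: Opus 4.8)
The plan is to compute the Wronskian determinant directly and identify both the degree and the leading coefficient by a careful bookkeeping of monomials. Write each monic polynomial as $P_i(x) = \sum_{k=0}^{n_i} c_{i,k} x^k$ with $c_{i,n_i} = 1$. Then the $(i,j)$ entry of the Wronskian matrix is $P_j^{(i-1)}(x) = \sum_{k\geq i-1} c_{j,k} \frac{k!}{(k-i+1)!} x^{k-i+1}$. Expanding the determinant via the Leibniz formula over permutations $\sigma \in S_r$, each term is a product $\prod_{i=1}^r P_{\sigma(i)}^{(i-1)}(x)$, and I would analyze the degree contributed by such a product.

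The key observation is that differentiating $P_j$ a total of $i-1$ times lowers its degree from $n_j$ to $n_j - (i-1)$ (when $n_j \geq i-1$; otherwise the entry vanishes in the relevant top-degree bookkeeping). So the product $\prod_{i=1}^r P_{\sigma(i)}^{(i-1)}$ has degree at most $\sum_{i=1}^r \big(n_{\sigma(i)} - (i-1)\big) = \sum_{j=1}^r n_j - \binom{r}{2}$, independent of $\sigma$. Since $\sum_{j} n_j = \sum_j (\lambda_j + r - j) = |\lambda| + \binom{r}{2}$, this upper bound equals exactly $|\lambda|$. Thus $\Wr[P_1,\dots,P_r]$ has degree at most $|\lambda|$, and the coefficient of $x^{|\lambda|}$ is obtained by taking, from each factor $P_{\sigma(i)}^{(i-1)}$, only its top-degree term, which is $\frac{n_{\sigma(i)}!}{(n_{\sigma(i)}-i+1)!} x^{n_{\sigma(i)}-i+1}$ (using that $P_{\sigma(i)}$ is monic), with the convention that this is $0$ whenever $n_{\sigma(i)} < i-1$.

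Collecting the top-degree terms, the coefficient of $x^{|\lambda|}$ equals
\begin{gather*}
\sum_{\sigma \in S_r} \sgn(\sigma) \prod_{i=1}^r \frac{n_{\sigma(i)}!}{(n_{\sigma(i)}-i+1)!},
\end{gather*}
which is precisely the determinant of the $r \times r$ matrix whose $(i,j)$ entry is $\frac{n_j!}{(n_j-i+1)!}$, i.e.\ the falling factorial $(n_j)_{i-1} \cdot$ (reindexed appropriately). Pulling the factor $n_j!$ out of column $j$, this determinant becomes $\big(\prod_{j=1}^r n_j!\big) \cdot \det\!\big[ 1/(n_j-i+1)! \big]_{i,j}$. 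The remaining step is to recognize the latter determinant: with $m := i-1$ running from $0$ to $r-1$, the $(m,j)$ entry is $1/(n_j - m)!$, and this is a standard evaluation equal to $\Delta(n_\lambda)/\prod_{j=1}^r n_j!$ — indeed, each entry $1/(n_j-m)! = \binom{n_j}{m} \cdot (m!)^{-1} \cdot (n_j - m)! / n_j! \cdot \dots$; more cleanly, $\det[1/(n_j-m)!]_{0 \le m \le r-1} = \big(\prod_j n_j!\big)^{-1}\det[n_j(n_j-1)\cdots(n_j-m+1)]$, and since $n_j(n_j-1)\cdots(n_j-m+1)$ is a monic polynomial of degree $m$ in $n_j$, row operations reduce this last determinant to the Vandermonde $\prod_{1\le i<j\le r}(n_j - n_i) = \Delta(n_\lambda)$. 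Multiplying through, the leading coefficient is exactly $\Delta(n_\lambda)$.

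Two points deserve care. First, one must justify that no permutation $\sigma$ contributes a monomial of degree higher than $|\lambda|$; this follows from the degree bound above being uniform in $\sigma$, but one should note that terms with $n_{\sigma(i)} < i - 1$ contribute a genuinely smaller degree (or zero), so they do not affect the leading coefficient — and the combinatorial identity $\det[1/(n_j-m)!] = \Delta(n_\lambda)/\prod_j n_j!$ automatically assigns value $0$ to such vanishing entries via the convention $1/(\text{negative})! = 0$, so the formula is consistent. Second, one must confirm $\Delta(n_\lambda) \neq 0$ so that the degree is exactly $|\lambda|$; this is immediate because the degree sequence $n_\lambda$ is strictly decreasing, as noted after \eqref{eq:DegreeSequence}, so all factors $n_j - n_i$ with $i < j$ are nonzero. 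The main obstacle is purely the bookkeeping in the determinant evaluation — correctly tracking falling factorials and reducing to the Vandermonde determinant — rather than any conceptual difficulty; the rest is routine.
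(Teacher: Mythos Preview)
Your proof is correct and follows exactly the approach the paper indicates: the paper does not give a detailed proof but simply remarks that the lemma ``can be proven directly by evaluating the determinant,'' and your Leibniz expansion together with the reduction of the falling-factorial determinant to the Vandermonde is precisely such a direct evaluation. The bookkeeping in the middle (pulling out $\prod_j n_j!$ and then re-inserting it) is a slight detour---one can more directly note that $(n_j)_{i-1}$ is a monic polynomial of degree $i-1$ in $n_j$ and row-reduce to $\det[n_j^{i-1}]$---but you arrive at the correct identity either way.
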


Now we specify this to the Wronskian of Hermite polynomials. This definition is a slight modification of the one used in the existing literature \cite{Felder_Hemery_Veselov, GomezUllate_Grandati_Milson-14,GomezUllate_Grandati_Milson-16,Kuijlaars_Milson}.

\begin{Definition}\label{def:WronskianHermite}
 For any partition $\lambda$, we define the \textit{Wronskian Hermite polynomial} associated to $\lambda$ as
 \begin{gather}\label{eq:OmegaHe}
 \He_{\lambda}:=\frac{\Wr [\He_{n_1},\dots,\He_{n_r}]}{\Delta(n_\lambda)},
 \end{gather}
 where $n_{\lambda}=(n_1,\dots,n_r)$ is the degree sequence of $\lambda$, see \eqref{eq:DegreeSequence}.
\end{Definition}

By Lemma \ref{lem:WronskianPolynomial}, we have that $\He_{\lambda}$ is a monic polynomial of degree $|\lambda|$. We also note that this definition generalizes the concept of Hermite polynomials. Namely, for the partition $\lambda=(n)$, we have $\He_{(n)}\equiv\He_n$.

\subsection{New notation}
\label{subsectionnotation}
In Section~\ref{sectionmainresults} we state our main results. For this, we need some (new) notation which we introduce here.

Using the partial order on all partitions defined in Section~\ref{subsectionpartitions}, we can define the sets
\begin{gather}\label{eq:Tk}
T_k(\lambda)=\{\mu\leq \lambda \,|\, \lvert \mu \rvert=\lvert \lambda\rvert -k\}
\end{gather}
for all partitions $\lambda$ and all $k\leq \lvert \lambda \rvert$. We give special attention to some elements of $T_2(\lambda)$: we define
\begin{gather*}
T_2^h(\lambda)=\{\rho\in T_2(\lambda) \,|\, \exists\, i\colon \rho_i=\lambda_i-2\}
\end{gather*}
and
\begin{gather*}
T_2^v(\lambda)=\{\rho \in T_2(\lambda) \,|\, \exists \, i\colon \rho_i=\rho_{i+1}=\lambda_{i}-1=\lambda_{i+1}-1\}.
\end{gather*}
Again, whenever $i$ is bigger than the length of $\mu$, we naturally set $\mu_i=0$. These sets can be understood most easily by considering Young diagrams as illustrated in Example~\ref{ex:644211}. Indeed, the~$h$ in $T_2^{h}$ represents the fact that the two deleted cells in the Young diagram of $\lambda$ are horizontally adjacent. Likewise, the elements in~$T_2^{v}$ are obtained by deleting two vertically adjacent cells.
We define
\begin{gather*}
\tilde{T}_2(\lambda)=T_2^h(\lambda)\cup T_2^v(\lambda).
\end{gather*}
From the above definition, it is also clear that one has the relation
\begin{gather}\label{eq:verticalhorizontalconjugate}
\rho \in T_2^v(\lambda) \iff \rho' \in T_2^h(\lambda')
\end{gather}
for every partition $\lambda$. Finally, we define the symbol $\sgn(\rho,\lambda)$ for every partition $\rho\in \tilde{T}_2(\lambda)$ as
\begin{gather}\label{eq:sgn}
\sgn(\rho,\lambda)
= \begin{cases}
-1, & \rho\in T_2^h(\lambda), \\
\hphantom{-}1, & \rho \in T_2^v(\lambda).
\end{cases}
\end{gather}

\begin{Example}\label{ex:644211}
 Consider the partition $\lambda=(6,4,4,2,1,1)$. In Fig.~\ref{fig:1}, the crosses and bullets indicate which cells can be removed to obtain elements in $T_1(\lambda)$, $T_2^h(\lambda)$ and $T_2^v(\lambda)$. To be precise, in the first illustration removing precisely one of the cells containing a cross yields an element of~$T_1(\lambda)$. In the second illustration, either the two cells indicated by bullets or the two cells indicated by crosses can be removed to obtain an element in~$T_2^h(\lambda)$. Similarly for the third picture, removing the two cells indicated by either a cross or a bullet gives an element from~$T_2^v(\lambda)$.
 \begin{figure}[t]
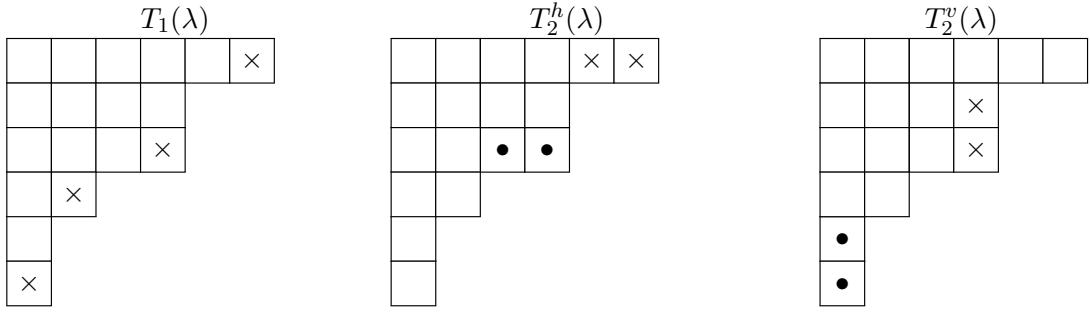

 \centering
 $T_1(\lambda)$ \hspace{4cm} $T_2^h(\lambda)$ \hspace{4cm} $T_2^v(\lambda)$ \\
 \ydiagram[*(white) \times]{5+1,4+0,3+1,1+1,1+0,0+1}*[*(white)]{5,4,3,1,1,0}
 \qquad \qquad
 \ydiagram[*(white) \times]{4+2,4+0,4+0,2+0,1+0,1+0}*[*(white)]{4,4,2,2,1,1}*[*(white) \bullet]{6+0,4+0,2+2,2+0,1+0,1+0}
 \qquad \qquad
 \ydiagram[*(white) \times]{6+0,3+1,3+1,2+0,1+0,1+0}*[*(white)]{6,3,3,2,0,0}*[*(white) \bullet] {6+0,4+0,4+0,2+0,0+1,0+1}\\
 \caption{Indication of $T_1(\lambda)$, $T_2^h(\lambda)$ and $T_2^v(\lambda)$ for the partition $\lambda=(6,4,4,2,1,1)$.}
 \label{fig:1}
 \end{figure}
 Hence the sets of partitions are given by
 \begin{gather*}
 T_1(\lambda) = \{(5,4,4,2,1,1),(6,4,3,2,1,1),(6,4,4,1,1,1),(6,4,4,2,1)\}, \\
 T_2^h(\lambda) = \{(4,4,4,2,1,1),(6,4,2,2,1,1)\}, \\
T_2^v(\lambda) = \{(6,3,3,2,1,1),(6,4,4,2)\}.
 \end{gather*}
 Note that not all elements of $T_1(\lambda)$ and $\tilde{T}_2(\lambda)$ have the same length.
\end{Example}

\section{Main results}\label{sectionmainresults}

We state our new results in Section~\ref{sec:NewResults}. We postpone the proofs until later sections. In Section~\ref{sec:Corollaries}, we give some corollaries of the generating recurrence relation. Some of these corollaries are already known in the existing literature but now have easy direct proofs using our recurrence relation.

\subsection{New results}\label{sec:NewResults}
Our main result is a generalization of the defining recurrence relation for Hermite polyno\-mials~\eqref{eq:recursionforstandardhermite} to Wronskian Hermite polynomials. The proof is given in Section~\ref{sectionproofrecursion}.

\begin{Theorem}[generating recurrence relation]\label{thm:RecurrenceRelation}
 Let $\lambda$ be a partition such that $\lvert \lambda \rvert \geq 2$. Then
 \begin{gather}\label{eq:RecurrenceRelationHe}
 F_{\lambda} \He_{\lambda}(x)  = x \sum_{\mu\in T_1(\lambda)} F_{\mu} \He_{\mu}(x) + (|\lambda|-1) \sum_{\rho\in \tilde{T}_2(\lambda)} \sgn(\rho,\lambda) F_{\rho} \He_{\rho}(x),
 \end{gather}
 where $T_1(\lambda)$, $\tilde{T}_2(\lambda)$ and $\sgn(\rho,\lambda)$ are as defined in Section~{\rm \ref{subsectionnotation}}.
\end{Theorem}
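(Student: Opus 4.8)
The plan is to derive~\eqref{eq:RecurrenceRelationHe} from a single identity between Wronskians of Hermite polynomials. Namely, for every partition $\lambda$ with degree sequence $n_\lambda=(n_1,\dots,n_r)$ I would first prove
\begin{gather*}
|\lambda|\,\Wr[\He_{n_1},\dots,\He_{n_r}] = x\sum_{k=1}^{r} n_k\,\Wr[\He_{n_1},\dots,\He_{n_k-1},\dots,\He_{n_r}] - \sum_{k=1}^{r} n_k(n_k-1)\,\Wr[\He_{n_1},\dots,\He_{n_k-2},\dots,\He_{n_r}],
\end{gather*}
where in the $k$-th summand only the $k$-th entry of the sequence is modified. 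Granting this, Theorem~\ref{thm:RecurrenceRelation} follows by dividing through by $\Delta(n_\lambda)$ and translating both sides by means of Definition~\ref{def:WronskianHermite}, Lemma~\ref{lem:WronskianPolynomial} and formula~\eqref{eq:Flambda}.

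To prove the Wronskian identity, fix $k$ and let $C(m)=(\He_m,\He_m',\dots,\He_m^{(r-1)})^{\top}$ denote the $k$-th column of the Wronskian matrix. Substituting~\eqref{eq:recursionforstandardhermite} in the form $x\He_{n_k-1}=\He_{n_k}+(n_k-1)\He_{n_k-2}$ and using multilinearity in the $k$-th column,
\begin{gather*}
\Wr[\dots,x\He_{n_k-1},\dots] = \Wr[\He_{n_1},\dots,\He_{n_r}] + (n_k-1)\,\Wr[\dots,\He_{n_k-2},\dots].
\end{gather*}
Separately, the Leibniz rule $(x\He_{n_k-1})^{(i-1)} = x\He_{n_k-1}^{(i-1)} + (i-1)\He_{n_k-1}^{(i-2)}$ and multilinearity give
\begin{gather*}
\Wr[\dots,x\He_{n_k-1},\dots] = x\,\Wr[\dots,\He_{n_k-1},\dots] + \det\bigl[\,C(n_1)\mid\dots\mid w_k\mid\dots\mid C(n_r)\,\bigr],
\end{gather*}
where $w_k$ is the column whose $i$-th entry is $(i-1)\He_{n_k-1}^{(i-2)}$. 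Since $\He_m'=m\He_{m-1}$ (an immediate induction from~\eqref{eq:recursionforstandardhermite}), we have $\He_{n_k-1}^{(i-2)}=n_k^{-1}\He_{n_k}^{(i-1)}$, so $n_kw_k=\tilde N C(n_k)$ with $\tilde N=\mathrm{diag}(0,1,\dots,r-1)$. In the Leibniz expansion of $\det[\,C(n_1)\mid\dots\mid\tilde N C(n_k)\mid\dots\mid C(n_r)\,]$ the term indexed by a permutation $\sigma$ carries the extra factor $\sigma(k)-1$; summing over $k$ and using $\sum_{k=1}^r(\sigma(k)-1)=\frac{r(r-1)}{2}$ for every $\sigma$ gives $\sum_{k=1}^{r}\det[\,C(n_1)\mid\dots\mid\tilde N C(n_k)\mid\dots\mid C(n_r)\,]=\frac{r(r-1)}{2}\,\Wr[\He_{n_1},\dots,\He_{n_r}]$. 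Eliminating $\Wr[\dots,x\He_{n_k-1},\dots]$ between the two displays above, multiplying by $n_k$, summing over $k$ and using $\sum_k n_k=|\lambda|+\frac{r(r-1)}{2}$ yields the identity.

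Next I would identify the Wronskians on the right-hand side. The sequence obtained from $n_\lambda$ by replacing $n_k$ with $n_k-1$ is strictly decreasing exactly when $\lambda_k>\lambda_{k+1}$, and then it is the degree sequence of the partition $\mu$ obtained by deleting one cell from row $k$, so $\Wr[\dots,\He_{n_k-1},\dots]=\Delta(n_\mu)\He_\mu$; otherwise it has a repeated entry and the Wronskian vanishes. This matches the non-vanishing terms bijectively with $T_1(\lambda)$. Replacing $n_k$ with $n_k-2$: the sequence is strictly decreasing when $\lambda_k\ge\lambda_{k+1}+2$, giving the $T_2^h(\lambda)$-element obtained by deleting a horizontal domino from row $k$; it becomes strictly decreasing after one transposition (contributing a sign $-1$) when $\lambda_k=\lambda_{k+1}>\lambda_{k+2}$, giving the $T_2^v(\lambda)$-element obtained by deleting a vertical domino from rows $k,k+1$; and it vanishes otherwise. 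Since $\sgn(\rho,\lambda)$ equals $-1$ on $T_2^h$ and $+1$ on $T_2^v$, these two cases combine, and dividing the identity by $\Delta(n_\lambda)$ gives
\begin{gather*}
|\lambda|\,\He_\lambda = x\sum_{\mu\in T_1(\lambda)}\frac{n_{k(\mu)}\Delta(n_\mu)}{\Delta(n_\lambda)}\,\He_\mu + \sum_{\rho\in\tilde T_2(\lambda)}\sgn(\rho,\lambda)\,\frac{n_{k(\rho)}\bigl(n_{k(\rho)}-1\bigr)\Delta(n_\rho)}{\Delta(n_\lambda)}\,\He_\rho,
\end{gather*}
where $k(\nu)$ is the row (respectively the top row of the domino) that was deleted.

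Finally it remains to check the coefficient identities $n_{k(\mu)}\Delta(n_\mu)=|\lambda|\,F_\mu\,\Delta(n_\lambda)/F_\lambda$ and $n_{k(\rho)}(n_{k(\rho)}-1)\Delta(n_\rho)=|\lambda|(|\lambda|-1)\,F_\rho\,\Delta(n_\lambda)/F_\lambda$. Substituting~\eqref{eq:Flambda}, the Vandermonde factors cancel and one is left with the elementary computations $\prod_i (n_\lambda)_i!\big/\prod_i (n_\mu)_i!=n_{k(\mu)}$, $\prod_i (n_\lambda)_i!\big/\prod_i (n_\rho)_i!=n_{k(\rho)}(n_{k(\rho)}-1)$, together with $|\lambda|!/|\mu|!=|\lambda|$ and $|\lambda|!/|\rho|!=|\lambda|(|\lambda|-1)$. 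Multiplying the displayed recurrence by $F_\lambda/|\lambda|$ then produces~\eqref{eq:RecurrenceRelationHe}. I expect the real difficulty — and presumably the content of the technical appendix lemmas — to lie in the boundary cases where deleting a cell or domino shortens the partition (for instance $k=r$ with $\lambda_r=1$, or a domino forming the whole bottom row): there the altered Wronskian contains $\He_0\equiv 1$ and must be re-evaluated by a cofactor expansion, which produces both an extra factor such as $(-1)^{r+1}\prod_{j<r}n_j$ and the replacement of $(-1)^{r(r-1)/2}$ by $(-1)^{(r-1)(r-2)/2}$ in~\eqref{eq:Flambda}; one verifies by direct computation that these cancel precisely, so that the coefficient identities continue to hold.
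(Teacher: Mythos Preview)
Your proof is correct. The Wronskian identity you state is exactly the paper's Proposition~\ref{prop:recurrenceforPhi} after unpacking the definition of $\Phi$, so the overall architecture---prove a single ``raw'' identity for arbitrary index sequences, then sort out which terms survive as partitions---is the same as the paper's. What differs is how the raw identity is established. The paper expands the Wronskian as a sum over permutations~\eqref{eq:Phioverpermutations}, applies the Hermite recurrence to one factor $\He_{\sigma(n)_j}$ at a time, and regroups via the little combinatorial identity of Lemma~\ref{lem:reducingn}(4), namely $\sum_j a[n[j]]!/\prod_i\sigma(n[j])_i! = a[n]!/\prod_i\sigma(n)_i!$. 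You instead compute $\Wr[\dots,x\He_{n_k-1},\dots]$ in two ways and exploit the observation $n_kw_k=\tilde N\,C(n_k)$ together with $\sum_k(\sigma(k)-1)=\tfrac{r(r-1)}{2}$; this is a genuinely different trick, staying at the level of column manipulations rather than fully unpacking the determinant.

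For the translation step the case analysis is essentially the same as the paper's nine cases. The paper packages the boundary cases (where the modified sequence acquires a zero or a negative entry, or ceases to be strictly decreasing) into Propositions~\ref{prop:fundamentalsPhi} and~\ref{prop:zeroinPhi}, which makes them uniform; your cofactor expansion with $\He_0\equiv1$ achieves the same thing more directly, and your sign/factorial bookkeeping for the length-drop case is right. The advantage of your route is that it avoids introducing the auxiliary $\Phi$ polynomials; the advantage of the paper's route is that the $\Phi$-formalism absorbs all edge cases into two clean lemmas and is reusable for the later proofs.
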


The above recurrence relation generates all Wronskian Hermite polynomials using the fact that we know that
\begin{gather*}
\He_{\varnothing}(x)=\He_0(x)=1, \qquad \He_{(1)}=\He_1(x)=x.
\end{gather*}

We also have the following recurrence relation, which is not generating but gives a lot of structure. The proof of this theorem is given in Section~\ref{sectiontopdownrecurrence} and uses the generating recurrence relation.

\begin{Theorem}[top-down recurrence relation]\label{thm:topdownrecurrence} For any partition $\lambda$ we have
 \begin{gather}\label{eq:topdownrecurrenceforHe}
 x (\lvert \lambda\rvert+1) F_\lambda \He_\lambda(x)=\sum_{\substack{\gamma \\ \lambda \in T_1(\gamma)}} F_\gamma \He_\gamma(x),
 \end{gather}
 where $T_1(\gamma)$ is as defined in Section~{\rm \ref{subsectionnotation}}.
\end{Theorem}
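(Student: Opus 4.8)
The plan is to derive the top-down recurrence relation \eqref{eq:topdownrecurrenceforHe} from the generating recurrence relation \eqref{eq:RecurrenceRelationHe} by summing the latter over all partitions $\gamma$ with $\lambda \in T_1(\gamma)$, i.e., over all $\gamma$ obtained from $\lambda$ by adding a single box. First I would write down \eqref{eq:RecurrenceRelationHe} for each such $\gamma$ (noting $|\gamma| = |\lambda|+1 \geq 2$, so the hypothesis is met whenever $|\lambda|\geq 1$; the case $|\lambda|=0$ must be checked separately and is immediate since $\He_\varnothing = 1$, $\He_{(1)} = x$, and the only $\gamma$ with $\varnothing \in T_1(\gamma)$ is $\gamma = (1)$). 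Summing over all admissible $\gamma$ gives
\begin{gather*}
\sum_{\substack{\gamma \\ \lambda \in T_1(\gamma)}} F_\gamma \He_\gamma(x) = x \sum_{\substack{\gamma \\ \lambda \in T_1(\gamma)}} \sum_{\mu \in T_1(\gamma)} F_\mu \He_\mu(x) + |\lambda| \sum_{\substack{\gamma \\ \lambda \in T_1(\gamma)}} \sum_{\rho \in \tilde T_2(\gamma)} \sgn(\rho,\gamma) F_\rho \He_\rho(x).
\end{gather*}

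The heart of the argument is a bookkeeping identity on partitions: I would reorganize both double sums on the right by the inner index. For the first term, the pairs $(\gamma,\mu)$ with $\lambda \in T_1(\gamma)$ and $\mu \in T_1(\gamma)$ are: either $\mu = \lambda$ (and then $\gamma$ ranges over all box-additions of $\lambda$, contributing a factor equal to the number of addable boxes of $\lambda$, times $F_\lambda \He_\lambda$), or $\mu \neq \lambda$, in which case $\gamma$ covers both $\lambda$ and $\mu$, so $\gamma = \lambda \cup \mu$ is obtained from $\lambda$ by adding a box in a position where $\mu$ differs from $\lambda$; such $\gamma$ is unique given the pair, and the contribution is $\sum_{\mu} c_\mu F_\mu \He_\mu$ where $c_\mu$ counts the number of $\gamma$ containing both. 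The key structural fact I expect to need is that for $\mu \neq \lambda$ with $|\mu| = |\lambda|$, the partitions $\gamma$ with $\mu, \lambda \in T_1(\gamma)$ are in bijection with... actually the cleaner route is to match the $\mu \neq \lambda$ contributions from the first sum against the $\tilde T_2(\gamma)$ contributions from the second sum, exploiting the sign $\sgn(\rho,\gamma)$: when one removes two boxes from $\gamma$ to reach $\rho$, the two intermediate one-box-removed partitions are relevant, and the $+1/-1$ signs for vertical/horizontal pairs are exactly engineered so that the cross terms telescope or cancel.

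The main obstacle, and where I would spend the most care, is precisely this cancellation/recombination of the mixed terms: one must show that $x \sum_{\gamma}\sum_{\mu \in T_1(\gamma),\, \mu \neq \lambda} F_\mu \He_\mu$ together with $|\lambda|\sum_\gamma \sum_{\rho \in \tilde T_2(\gamma)} \sgn(\rho,\gamma) F_\rho \He_\rho$ collapses to leave only a multiple of $x F_\lambda \He_\lambda$ — with total coefficient $(|\lambda|+1)$. To control this I would use the explicit formula \eqref{eq:Flambda} for $F_\lambda$ in terms of degree sequences and Vandermonde determinants to compute the combinatorial coefficients $c_\mu$ exactly, and use that the number of addable boxes of $\lambda$ minus the number of removable boxes of $\lambda$ equals $1$ (a standard fact about Young diagrams), which is the source of the "$+1$" in $(|\lambda|+1)$. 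Relations \eqref{eq:verticalhorizontalconjugate} and the conjugation symmetry $\He_{\lambda'} \leftrightarrow$ may be invoked to halve the casework between horizontal and vertical configurations. Once the combinatorial identity is established, the theorem follows by collecting coefficients; I would isolate the purely partition-theoretic lemmas (counting $\gamma$'s and evaluating the $F$-ratios) and relegate their verification to the appendix as the paper does with its other technical lemmas.
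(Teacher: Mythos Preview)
Your opening move---summing the generating recurrence \eqref{eq:RecurrenceRelationHe} over all $\gamma$ with $\lambda\in T_1(\gamma)$---is exactly what the paper does, and your use of the differential-poset fact (addable boxes minus removable boxes equals $1$) is also the right structural input for the first double sum. However, there is a genuine gap in how you plan to handle the ``cross terms''.

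You expect the contributions from $\mu\neq\lambda$ in the first double sum to cancel against the $\tilde T_2(\gamma)$ terms by a \emph{purely combinatorial} identity on the numbers $F_\mu$ (via~\eqref{eq:Flambda}). This cannot work: after reorganizing, the first double sum contains terms $F_\mu\He_\mu$ for the ``sibling'' partitions $\mu\in S(\lambda)$ with $|\mu|=|\lambda|$, while the second double sum involves $F_\rho\He_\rho$ with $|\rho|=|\lambda|-1$. These are polynomials of different degrees in different (conjecturally linearly independent) families, so no identity among the scalar coefficients $F_\bullet$ alone will make them collapse to a multiple of $F_\lambda\He_\lambda$. Concretely, for $\lambda=(2)$ the needed cancellation is $-\He_{(2)}+\He_{(1,1)}=2$, a genuine polynomial identity, not a statement about $F$-values.

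The missing idea is \emph{induction on $|\lambda|$}. The paper reorganizes the first double sum as
\[
\psi_\lambda+\sum_{\gamma\in T_1(\lambda)}\sum_{\mu:\gamma\in T_1(\mu)}\psi_\mu,
\]
so that the inner sum is exactly the top-down recurrence for the smaller partition $\gamma$; the induction hypothesis then collapses it to $x|\lambda|\sum_{\gamma\in T_1(\lambda)}\psi_\gamma$. For the second double sum, a sign-bookkeeping lemma (the paper's Lemma~\ref{lem:lemmafortopdown}) lets one swap the order to obtain $\sum_{\rho\in\tilde T_2(\lambda)}\sgn(\rho,\lambda)\sum_{\gamma:\rho\in T_1(\gamma)}\psi_\gamma$, and the induction hypothesis again applies. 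After both collapses one recognizes the generating recurrence for $\lambda$ once more, yielding $x(|\lambda|+1)\psi_\lambda$. So keep your setup, but replace the hoped-for $F$-identity by induction plus the interchange lemma.
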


We give an explicit example to illustrate the results in Theorems~\ref{thm:RecurrenceRelation} and~\ref{thm:topdownrecurrence}.

\begin{Example}
 Take $\lambda=(4,2,1)$. We have
 \begin{gather*}
 35 \He_{(4,2,1)}(x) = x \left( 16 \He_{(3,2,1)}(x) + 10 \He_{(4,1,1)}(x) + 9 \He_{(4,2)}(x) \right)
 - 30 \He_{(2,2,1)}(x).
 \end{gather*}
 and
 \begin{gather*}
 280x \He_{(4,2,1)}(x)= 64 \He_{(5,2,1)}(x) + 70 \He_{(4,3,1)}(x) +56 \He_{(4,2,2)}(x) + 90 \He_{(4,2,1,1)}(x).
 \end{gather*}
\end{Example}

A remarkable consequence of the generating recurrence relation is that the average polynomial, with respect to the Plancherel measure, of all Wronskian Hermite polynomials of a fixed degree is the monomial of that degree.

\begin{Theorem}\label{thm:Average} Suppose that $n$ is a non-negative integer. Then
\begin{gather*}
\sum_{\lambda\vdash n} \frac{F_\lambda^2}{n!} \He_{\lambda}(x)= x^n.
\end{gather*}
\end{Theorem}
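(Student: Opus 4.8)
The plan is to prove the statement by induction on $n$, using the generating recurrence relation of Theorem~\ref{thm:RecurrenceRelation} together with the identity $\sum_{\lambda\vdash n}F_\lambda^2=n!$. The base cases $n=0$ and $n=1$ are immediate since $\He_\varnothing=1$ and $\He_{(1)}=x$, and the sums over partitions of $0$ and $1$ each have a single term with $F_\lambda=1$. For the inductive step, I would start from the left-hand side $\sum_{\lambda\vdash n}\frac{F_\lambda^2}{n!}\He_\lambda(x)$, multiply through by $n!$, and substitute the expression for $F_\lambda\He_\lambda(x)$ from \eqref{eq:RecurrenceRelationHe}. This yields
\begin{gather*}
n!\sum_{\lambda\vdash n}\frac{F_\lambda^2}{n!}\He_\lambda(x)
= x\sum_{\lambda\vdash n}F_\lambda\sum_{\mu\in T_1(\lambda)}F_\mu\He_\mu(x)
+(n-1)\sum_{\lambda\vdash n}F_\lambda\sum_{\rho\in\tilde T_2(\lambda)}\sgn(\rho,\lambda)F_\rho\He_\rho(x).
\end{gather*}

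The key combinatorial step is to interchange the order of summation in both double sums, grouping by the smaller partition rather than by $\lambda$. For the first sum, $\mu\in T_1(\lambda)$ means $\mu$ is obtained from $\lambda$ by removing one cell, equivalently $\lambda$ is obtained from $\mu$ by adding one cell; so $\sum_{\lambda\vdash n}\sum_{\mu\in T_1(\lambda)}F_\lambda F_\mu\He_\mu(x)=\sum_{\mu\vdash n-1}F_\mu\He_\mu(x)\big(\sum_{\lambda\colon\mu\in T_1(\lambda)}F_\lambda\big)$. Here I would invoke the branching rule for $F_\lambda$: the sum of $F_\lambda$ over all $\lambda$ obtained from $\mu$ by adding one cell equals $(|\mu|+1)F_\mu = nF_\mu$. (This is the standard fact that $F_\lambda$ counts paths in Young's lattice, or equivalently follows from the representation-theoretic branching rule for symmetric groups; it also underlies Theorem~\ref{thm:topdownrecurrence}.) Hence the first term becomes $x\cdot n\sum_{\mu\vdash n-1}F_\mu^2\He_\mu(x) = x\cdot n\cdot (n-1)!\, x^{n-1} = n!\,x^n$ by the inductive hypothesis applied at degree $n-1$. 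So it remains to show that the second sum vanishes.

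The main obstacle is therefore showing that $\sum_{\lambda\vdash n}F_\lambda\sum_{\rho\in\tilde T_2(\lambda)}\sgn(\rho,\lambda)F_\rho\He_\rho(x)=0$. Again I interchange summation to group by $\rho\vdash n-2$: the inner coefficient becomes $\sum_{\lambda\colon\rho\in\tilde T_2(\lambda)}\sgn(\rho,\lambda)F_\lambda F_\rho$, and it suffices to prove that for each fixed $\rho\vdash n-2$,
\begin{gather*}
\sum_{\lambda\colon\rho\in T_2^h(\lambda)}F_\lambda \;=\; \sum_{\lambda\colon\rho\in T_2^v(\lambda)}F_\lambda .
\end{gather*}
In words: the partitions $\lambda$ of which $\rho$ is obtained by deleting two horizontally adjacent cells, and those obtained by deleting two vertically adjacent cells, contribute equal total $F_\lambda$-weight. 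The plan to establish this is to use the conjugation symmetry \eqref{eq:verticalhorizontalconjugate}, which gives a bijection $\lambda\leftrightarrow\lambda'$ between the two index sets, combined with the fact that $F_\lambda=F_{\lambda'}$ (conjugating a standard Young tableau is a bijection on standard Young tableaux). This immediately matches the two sums term by term, so their difference is zero. Putting the two pieces together gives $n!\sum_{\lambda\vdash n}\frac{F_\lambda^2}{n!}\He_\lambda(x)=n!\,x^n$, completing the induction. I expect the only subtlety is making the branching-rule identity $\sum_{\lambda\colon\mu\in T_1(\lambda)}F_\lambda=(|\mu|+1)F_\mu$ precise (one should be careful that the sum is over all valid cell-additions, including the possibility of a new row), but this is classical and, in any case, is exactly the content that Theorem~\ref{thm:topdownrecurrence} packages, so one could alternatively cite that theorem directly.
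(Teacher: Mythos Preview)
Your overall strategy---induction on $n$, substitution of the generating recurrence~\eqref{eq:RecurrenceRelationHe}, interchanging the order of summation, and invoking the branching identity $\sum_{\lambda:\mu\in T_1(\lambda)}F_\lambda=(|\mu|+1)F_\mu$ for the first double sum---is exactly what the paper does. That part is fine.

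The gap is in your treatment of the second double sum. You correctly reduce the problem to showing, for each fixed $\rho\vdash n-2$,
\[
\sum_{\lambda:\,\rho\in T_2^h(\lambda)}F_\lambda \;=\; \sum_{\lambda:\,\rho\in T_2^v(\lambda)}F_\lambda,
\]
which is precisely the paper's Lemma~\ref{lem:Fidentitytwohigher}. But your conjugation argument does not prove this. The relation~\eqref{eq:verticalhorizontalconjugate} says $\rho\in T_2^v(\lambda)\iff \rho'\in T_2^h(\lambda')$, so the map $\lambda\mapsto\lambda'$ sends $\{\lambda:\rho\in T_2^v(\lambda)\}$ bijectively to $\{\mu:\rho'\in T_2^h(\mu)\}$, \emph{not} to $\{\mu:\rho\in T_2^h(\mu)\}$. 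Unless $\rho$ is self-conjugate, these are different sets. Concretely, for $\rho=(2)$ one has $\{\lambda:\rho\in T_2^h(\lambda)\}=\{(4),(2,2)\}$ and $\{\lambda:\rho\in T_2^v(\lambda)\}=\{(2,1,1)\}$; these sets do not even have the same cardinality, so no bijection between them exists. (The identity still holds numerically here, $1+2=3$, but not by term-matching.) What conjugation \emph{does} give you is that the bracketed quantity is an odd function of $\rho$ under $\rho\mapsto\rho'$; however, the accompanying factor $F_\rho\He_\rho(x)$ is not invariant under conjugation (cf.\ Proposition~\ref{prop:OmegaConj}), so this does not make the full sum vanish either.

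The paper establishes Lemma~\ref{lem:Fidentitytwohigher} by a separate inductive combinatorial argument (Appendix~\ref{appendixcombinatoriallemmas}), and that is the missing ingredient in your proposal.
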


The observation of the statement in this theorem was in fact the starting point for the research conducted for this paper. In Section~\ref{sectionproofaverage} we prove this claim inductively by using the generating recurrence relation.

\subsection{Other corollaries of the generating recurrence relation}\label{sec:Corollaries}
The recurrence relation for Hermite polynomials \eqref{eq:recursionforstandardhermite} has remarkable consequences for the Hermite polynomials itself, namely:
\begin{itemize}\itemsep=0pt
 \item[(a)] The derivative of the $n^\textrm{th}$ Hermite polynomial is given in terms of the Hermite polynomial of a degree lower \cite[formula~(22.8.8)]{Abramowitz_Stegun}, i.e.,
 \begin{gather}\label{eq:DerHe}
 \He'_n(x)=n\He_{n-1}(x).
 \end{gather}
 \item[(b)] $\He_n$ is an even (odd) polynomial if the degree $n$ is even (odd).
 \item[(c)] $\He_n$ is a monic polynomial with integer coefficients.
\end{itemize}
Note that all of these properties can be deduced directly from using the recurrence relation \eqref{eq:recursionforstandardhermite}. Now, we generalize these properties to the setting of Wronskian Hermite polynomials. First, we deal with statement (a): the derivative.

\begin{Proposition}\label{prop:OmegaDer} For any partition $\lambda$ we have
 \begin{gather*}
 \left(F_\lambda \He_{\lambda}\right)'  = |\lambda| \sum_{\mu\in T_{1}(\lambda)} F_{\mu} \He_{\mu}.
 \end{gather*}
\end{Proposition}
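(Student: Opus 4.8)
The plan is to prove Proposition~\ref{prop:OmegaDer} by induction on $|\lambda|$, using the generating recurrence relation (Theorem~\ref{thm:RecurrenceRelation}) as the engine, exactly as the introduction advertises. The base cases $|\lambda|\leq 1$ are immediate: for $\lambda=\varnothing$ and $\lambda=(1)$ one checks $\He_\varnothing'=0$, $\He_{(1)}'=1$, which matches the right-hand side (empty sum, resp.\ $1\cdot F_\varnothing\He_\varnothing=1$). So assume $|\lambda|\geq 2$ and that the formula holds for all partitions of smaller size.

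The first step is to differentiate both sides of \eqref{eq:RecurrenceRelationHe}. On the left we get $(F_\lambda\He_\lambda)'$, which is what we want to control. On the right, differentiating the $x$-factor in the first sum produces $\sum_{\mu\in T_1(\lambda)}F_\mu\He_\mu$ plus $x\sum_{\mu\in T_1(\lambda)}(F_\mu\He_\mu)'$, and differentiating the second sum gives $(|\lambda|-1)\sum_{\rho\in\tilde T_2(\lambda)}\sgn(\rho,\lambda)F_\rho\He_\rho'$. Since every $\mu\in T_1(\lambda)$ and every $\rho\in\tilde T_2(\lambda)$ is strictly smaller than $\lambda$, the induction hypothesis applies to each of these derivative terms: $(F_\mu\He_\mu)'=|\mu|\sum_{\nu\in T_1(\mu)}F_\nu\He_\nu=(|\lambda|-1)\sum_{\nu\in T_1(\mu)}F_\nu\He_\nu$, and similarly $(F_\rho\He_\rho)'=(|\lambda|-2)\sum_{\sigma\in T_1(\rho)}F_\sigma\He_\sigma$.

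The second step is a bookkeeping/combinatorial identity. After substituting, the claimed formula $(F_\lambda\He_\lambda)'=|\lambda|\sum_{\mu\in T_1(\lambda)}F_\mu\He_\mu$ becomes equivalent, via \eqref{eq:RecurrenceRelationHe} applied once more to rewrite $F_\mu\He_\mu$ wherever convenient, to an identity purely among the $F$'s and the polynomials $\He_\nu$ with $|\nu|=|\lambda|-2$ (and a leftover $x$-linear term that should cancel). Concretely, one wants to show
\begin{gather*}
x\sum_{\mu\in T_1(\lambda)}(|\lambda|-1)\sum_{\nu\in T_1(\mu)}F_\nu\He_\nu
+(|\lambda|-1)\sum_{\rho\in\tilde T_2(\lambda)}\sgn(\rho,\lambda)(|\lambda|-2)\sum_{\sigma\in T_1(\rho)}F_\sigma\He_\sigma
\end{gather*}
equals $|\lambda|\sum_{\mu\in T_1(\lambda)}F_\mu\He_\mu - \sum_{\mu\in T_1(\lambda)}F_\mu\He_\mu$, and then apply \eqref{eq:RecurrenceRelationHe} to the partition $\mu$ (each $\mu\in T_1(\lambda)$ has size $|\lambda|-1\geq 1$; if some $\mu$ has size $1$ handle it directly) to expand $(|\lambda|-1)F_\mu\He_\mu = x\sum_{\nu\in T_1(\mu)}F_\nu\He_\nu + (|\lambda|-2)\sum_{\tau\in\tilde T_2(\mu)}\sgn(\tau,\mu)F_\tau\He_\tau$. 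Matching the $x$-linear parts is then trivial, and the whole claim reduces to the purely set-theoretic statement that, for each $\sigma$ with $|\sigma|=|\lambda|-2$,
\begin{gather*}
\sum_{\substack{\rho\in\tilde T_2(\lambda)\\ \sigma\in T_1(\rho)}}\sgn(\rho,\lambda)
=\sum_{\substack{\mu\in T_1(\lambda)\\ \sigma\in\tilde T_2(\mu)}}\sgn(\sigma,\mu),
\end{gather*}
i.e.\ that removing two cells from $\lambda$ to reach $\sigma$ in the two possible orders (cell-at-a-time versus domino) produces matching signed counts. I expect the main obstacle to be precisely this last identity: one must carefully analyze the skew shape $\lambda/\sigma$ (two cells), splitting into the cases where the two removed cells form a horizontal domino, a vertical domino, or are disconnected, and verify that in each case the number of intermediate one-cell-removals of each type, weighted by $\pm1$, balances. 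This is a finite local check on Young diagrams, but getting all the sign conventions in \eqref{eq:sgn} and the edge cases (cells in the same row/column but not adjacent, corner interactions) right is the delicate part; it is likely this is where the paper invokes one of the technical lemmas deferred to the appendix.
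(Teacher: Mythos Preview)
Your approach is exactly the paper's: induct on $|\lambda|$, differentiate \eqref{eq:RecurrenceRelationHe}, apply the induction hypothesis to the lower-degree pieces, and then reassemble by applying \eqref{eq:RecurrenceRelationHe} to each $\mu\in T_1(\lambda)$. The combinatorial identity you isolate is precisely the paper's Lemma~\ref{lem:lemmaforderivative}, proved in the appendix by the kind of local case analysis you describe.

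There is, however, a bookkeeping slip in your write-up that would make the case analysis go wrong if carried out as you describe it. If $\rho\in\tilde T_2(\lambda)$ and $\sigma\in T_1(\rho)$ (equivalently, $\mu\in T_1(\lambda)$ and $\sigma\in\tilde T_2(\mu)$), then $|\sigma|=|\lambda|-3$, not $|\lambda|-2$. Consequently the skew shape $\lambda/\sigma$ consists of \emph{three} cells, not two. Your displayed identity is correct as written, but the description that follows it (``splitting into the cases where the two removed cells form a horizontal domino, a vertical domino, or are disconnected'') is the wrong case analysis: one must instead examine how three cells can sit in $\lambda/\sigma$ (three pairwise non-adjacent cells; a domino plus a loose cell; a connected three-cell configuration in one of its four orientations), exactly as in the appendix proof of Lemma~\ref{lem:lemmaforderivative}. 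Also, a minor typo: you wrote $(|\lambda|-1)F_\mu\He_\mu=\ldots$ when applying the recurrence to $\mu$; the left-hand side should simply be $F_\mu\He_\mu$.
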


Up to a combinatorial lemma, this statement follows from the generating recurrence relation by induction. The full proof is given in Section~\ref{sectionproofderivative}.

Applying the result in Proposition \ref{prop:OmegaDer} multiple times, we arrive at an expression for the higher-order derivatives. This yields a multiple sum expression, which can be interpreted in terms of skew-Young diagrams~\cite{Bona}.

Next, we consider the evenness and oddness of these polynomials as in statement (b).

\begin{Lemma}\label{lem:Omega-x}For any partition $\lambda$ we have
 \begin{gather*}
 \He_{\lambda}(-x) = (-1)^{|\lambda|} \He_{\lambda}(x).
 \end{gather*}
\end{Lemma}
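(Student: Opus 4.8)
The plan is to prove Lemma~\ref{lem:Omega-x} directly from the definition \eqref{eq:OmegaHe} of $\He_\lambda$ as a normalized Wronskian, using the elementary parity property of the probabilists' Hermite polynomials together with the scaling behaviour of the Wronskian determinant under the substitution $x\mapsto -x$. This is cleaner than an induction via Theorem~\ref{thm:RecurrenceRelation}, because all degrees in $T_1(\lambda)$ and $\tilde T_2(\lambda)$ change parity in incompatible ways, so one would have to track signs carefully; the determinant argument avoids that entirely.

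First I would record that each Hermite polynomial satisfies $\He_n(-x) = (-1)^n \He_n(x)$, which follows immediately from the recurrence \eqref{eq:recursionforstandardhermite} by induction (base cases $\He_0=1$, $\He_1(x)=x$; the recurrence preserves the alternating parity since $x\He_{n-1}$ and $(n-1)\He_{n-2}$ both pick up the sign $(-1)^n$). Next, for a differentiable function $f$ write $g(x) = f(-x)$; then $g^{(k)}(x) = (-1)^k f^{(k)}(-x)$. Applying this to the matrix $M$ whose $(i,j)$ entry is $\He_{n_j}^{(i-1)}$, the substitution $x \mapsto -x$ multiplies row $i$ of $M$ by $(-1)^{i-1}$ and multiplies the $\He_{n_j}$ factor in column $j$ by $(-1)^{n_j}$ (before differentiation, i.e.\ it replaces the argument), so that
\begin{gather*}
\Wr[\He_{n_1},\dots,\He_{n_r}](-x)
= \left(\prod_{i=1}^r (-1)^{i-1}\right)\left(\prod_{j=1}^r (-1)^{n_j}\right) \Wr[\He_{n_1},\dots,\He_{n_r}](x).
\end{gather*}
The first product equals $(-1)^{\frac{r(r-1)}{2}}$ and the second equals $(-1)^{\sum_j n_j}$. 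Since the Vandermonde determinant $\Delta(n_\lambda)$ is constant in $x$, dividing by it does not affect the parity, so $\He_\lambda(-x) = (-1)^{\frac{r(r-1)}{2} + \sum_j n_j}\He_\lambda(x)$.

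Finally I would simplify the exponent using the degree sequence \eqref{eq:DegreeSequence}: $\sum_{j=1}^r n_j = \sum_{j=1}^r (\lambda_j + r - j) = |\lambda| + \frac{r(r-1)}{2}$, whence $\frac{r(r-1)}{2} + \sum_j n_j = |\lambda| + r(r-1) \equiv |\lambda| \pmod 2$, since $r(r-1)$ is always even. This gives exactly $\He_\lambda(-x) = (-1)^{|\lambda|}\He_\lambda(x)$, as claimed. The only mildly delicate point — the ``main obstacle'', such as it is — is bookkeeping the two independent sources of signs (row index from differentiation, and the parity of each Hermite polynomial's argument) and checking they combine to $|\lambda| \bmod 2$; everything else is routine. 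One could alternatively phrase the computation as the statement that $\Wr[f_1,\dots,f_r](-x) = (-1)^{\binom r2}\Wr[\tilde f_1,\dots,\tilde f_r](x)$ where $\tilde f_i(x) = f_i(-x)$, and then invoke the parity of each $\He_{n_j}$, but the one-shot matrix argument above is the most transparent.
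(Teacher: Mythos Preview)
Your proof is correct and complete. The sign bookkeeping is accurate: each entry $\He_{n_j}^{(i-1)}(-x)$ picks up a factor $(-1)^{n_j+i-1}$, so rows contribute $(-1)^{r(r-1)/2}$ and columns contribute $(-1)^{\sum_j n_j}$, and the identity $\sum_j n_j = |\lambda| + \tfrac{r(r-1)}{2}$ from \eqref{eq:DegreeSequence} collapses the exponent to $|\lambda|$ modulo~$2$.

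However, your approach differs from the paper's. The paper does not give a Wronskian computation here; it simply remarks that the lemma follows from the generating recurrence relation \eqref{eq:RecurrenceRelationHe} by induction. Contrary to your parenthetical worry, that induction is entirely straightforward: for $\mu\in T_1(\lambda)$ one has $|\mu|=|\lambda|-1$, so the induction hypothesis gives $(-x)\He_\mu(-x)=(-1)^{|\lambda|}x\He_\mu(x)$; for $\rho\in\tilde T_2(\lambda)$ one has $|\rho|=|\lambda|-2$, so $\He_\rho(-x)=(-1)^{|\lambda|}\He_\rho(x)$ directly. Both groups of terms in \eqref{eq:RecurrenceRelationHe} therefore acquire the \emph{same} sign $(-1)^{|\lambda|}$, and the induction step is immediate --- there are no ``incompatible parities'' to track. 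So the comparison is: your determinant argument is self-contained and does not rely on Theorem~\ref{thm:RecurrenceRelation}, while the paper's route is a two-line induction that showcases the recurrence as a practical tool. Both are short; neither is materially harder than the other.
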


In \cite[formula~(10)]{GomezUllate_Grandati_Milson-14} and \cite[Section~2]{Felder_Hemery_Veselov}, this result was already shown, but now it follows directly from the generating recurrence relation by induction. As a direct consequence of Lemma~\ref{lem:Omega-x}, we see that these polynomials are either even or odd, depending on their degree.

It can be seen directly from the generating recurrence relation that the Wronskian Hermite polynomials have rational coefficients. However, we have an even stronger conjecture which is related to statement (c) for the ordinary Hermite polynomials.

\begin{Conjecture}\label{conj:integercoefficients} For every partition $\lambda$ we have $\He_{\lambda}(x) \in \mathbb{Z}[x]$.
\end{Conjecture}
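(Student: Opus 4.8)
The plan is to prove Conjecture~\ref{conj:integercoefficients} by strong induction on $|\lambda|$ using the generating recurrence relation~\eqref{eq:RecurrenceRelationHe}, but the naive induction immediately runs into trouble: the recurrence computes $F_\lambda \He_\lambda$ as a $\mathbb{Z}$-linear combination of the $F_\mu \He_\mu$ and $F_\rho \He_\rho$ with $|\mu|,|\rho|<|\lambda|$, so by induction $F_\lambda \He_\lambda \in \mathbb{Z}[x]$; the difficulty is dividing by $F_\lambda$. So the real content is showing that $F_\lambda$ divides the polynomial $x \sum_{\mu\in T_1(\lambda)} F_\mu \He_\mu + (|\lambda|-1)\sum_{\rho\in\tilde T_2(\lambda)}\sgn(\rho,\lambda)F_\rho\He_\rho$ coefficient-wise in $\mathbb{Z}$. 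One route is to strengthen the induction hypothesis so that it carries enough arithmetic information about the coefficients of $\He_\mu$ to make the divisibility by $F_\lambda$ transparent — for instance, conjecturing an explicit formula (or explicit denominators bounded by divisors of products of factorials from the degree sequence via~\eqref{eq:Flambda}) for the coefficients, and checking the recurrence respects it.

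Concretely, I would proceed as follows. First, set up the induction: the base cases $\He_\varnothing = 1$ and $\He_{(1)} = x$ lie in $\mathbb{Z}[x]$. For the inductive step with $|\lambda|\geq 2$, assume $\He_\mu\in\mathbb{Z}[x]$ for all $\mu$ with $|\mu|<|\lambda|$; then the right-hand side of~\eqref{eq:RecurrenceRelationHe}, call it $R_\lambda(x)$, lies in $\mathbb{Z}[x]$, and $\He_\lambda = R_\lambda / F_\lambda$. Second, use Lemma~\ref{lem:WronskianPolynomial} and Definition~\ref{def:WronskianHermite}: since $\He_\lambda$ is monic of degree $|\lambda|$ with rational coefficients, write $\He_\lambda(x) = \sum_{k} c_k x^k$ with $c_{|\lambda|}=1$; the task is $c_k \in \mathbb{Z}$ for all $k$. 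Third — and this is where the genuine work lies — I would try to pin down the primes and prime powers that could appear in the denominators of $c_k$. Using~\eqref{eq:Flambda}, $F_\lambda$ equals $|\lambda|!\,\Delta(n_\lambda)/\prod_i n_i!$ up to sign, so any prime $p$ dividing $F_\lambda$ satisfies $v_p(F_\lambda)\le v_p(|\lambda|!)$; combined with the known integrality of the classical Hermite coefficients and the Wronskian/Vandermonde structure, one can hope to bound $v_p$ of each coefficient of $\Wr[\He_{n_1},\dots,\He_{n_r}]$ and show it is at least $v_p(\Delta(n_\lambda))$, so that division by $\Delta(n_\lambda)$ in~\eqref{eq:OmegaHe} already yields $p$-integral coefficients.

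The main obstacle I anticipate is precisely this $p$-adic valuation estimate for the coefficients of the Wronskian determinant against the Vandermonde determinant of the degree sequence: the determinant expansion has $r!$ terms with delicate sign cancellation, and a term-by-term bound is too weak, so one needs to exploit the structure — perhaps by writing the Wronskian in terms of Schur-like polynomials in the $\He_{n_i}$, or by a clever change of basis (e.g.\ to falling factorials, where the Wronskian of monomials is literally a Vandermonde) that makes the factor $\Delta(n_\lambda)$ visibly divide out. An alternative, possibly cleaner, attack is purely combinatorial: iterate the generating recurrence down to the base cases to obtain $\He_\lambda$ as a sum over lattice paths (chains) in the partition order from $\lambda$ to $\varnothing$, with each path contributing a product of ratios $F_\mu/F_\nu$ and powers of $x$ and factors $(|\nu|-1)$; one then argues that, after summing over all paths, the total is integral — a statement reminiscent of the integrality of numbers like $\binom{n}{k}$ or Catalan numbers, provable by showing the relevant rational number counts something. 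I expect neither approach to be short, which is consistent with the authors leaving it as a conjecture; I would record the reduction to the divisibility $F_\lambda \mid R_\lambda$ in $\mathbb{Z}[x]$ as the cleanest partial result, and note the hook-length formula for $F_\lambda$ as the most promising tool for controlling its prime factorization.
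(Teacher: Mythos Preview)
The paper does not prove this statement: it is explicitly labeled a \emph{Conjecture}, and the authors only report numerical verification for all partitions with $|\lambda|\le 35$. There is no proof in the paper to compare against.

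Your proposal is likewise not a proof but a plan, and you are candid about this. You correctly identify the crux: the recurrence~\eqref{eq:RecurrenceRelationHe} immediately gives $F_\lambda \He_\lambda \in \mathbb{Z}[x]$ by induction, and the entire difficulty is the divisibility $F_\lambda \mid R_\lambda(x)$ in $\mathbb{Z}[x]$. Both strategies you sketch --- a $p$-adic estimate showing $v_p(\Delta(n_\lambda))$ is bounded by the $p$-valuation of each coefficient of the Wronskian, and a combinatorial path-sum argument --- are reasonable lines of attack, but neither is carried out, and you yourself note that the determinant expansion has sign cancellation that defeats a term-by-term bound. The reduction you record (integrality of $\He_\lambda$ is equivalent to $F_\lambda \mid R_\lambda$) is a correct and useful reformulation, but it does not advance beyond what the authors already knew when they stated the conjecture. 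In short: there is no gap to point to because there is no argument, only an outline of where a proof would have to go; and this matches the status of the problem in the paper, where it remains open.
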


We have verified this conjecture by using the computer software Maple for every partition $\lambda$ with degree $\lvert \lambda \rvert \leq 35$.

Besides the generalization of the properties of the standard Hermite polynomials, there are also properties of the Wronskian Hermite polynomials that only make sense in this setting. For example, there is an intimate relation between the polynomial associated to a partition $\lambda$ and the one associated to its conjugate $\lambda'$.

\begin{Proposition}\label{prop:OmegaConj}
 Let $\lambda$ be a partition and denote its conjugated partition by $\lambda'$. Then
 \begin{gather*}
 \He_{\lambda}(x) = i^{|\lambda|}\He_{\lambda'}(-ix).
 \end{gather*}
\end{Proposition}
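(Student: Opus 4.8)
\textbf{Proof proposal for Proposition~\ref{prop:OmegaConj}.}

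The plan is to reduce the statement to a property of the ordinary Hermite polynomials $\He_n$ together with the explicit definition of $\He_\lambda$ as a normalized Wronskian. First I would recall the classical fact that the probabilists' Hermite polynomials satisfy $\He_n(-ix) = i^{-n}\He_n^{*}(x)$ for a suitable companion family; more precisely, from the recursion \eqref{eq:recursionforstandardhermite} one checks by induction that $i^{-n}\He_n(ix)$ is a polynomial in $x$, and in fact equals the modified Hermite polynomial obtained by flipping the sign in the recurrence, i.e.\ the polynomial $\widetilde{\He}_n$ defined by $\widetilde{\He}_n(x) = x\widetilde{\He}_{n-1}(x) + (n-1)\widetilde{\He}_{n-2}(x)$. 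The key point I actually need is cleaner: substituting $x \mapsto -ix$ into the Wronskian and tracking how derivatives scale. If $f(x) = g(-ix)$, then $f^{(k)}(x) = (-i)^k g^{(k)}(-ix)$, so each row $k$ (counting from $0$) of the Wronskian matrix picks up a factor $(-i)^k$; over an $r\times r$ matrix this contributes $(-i)^{0+1+\cdots+(r-1)} = (-i)^{\binom{r}{2}}$ overall.

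Next I would set up the computation precisely. Let $\lambda$ have length $r$ and degree sequence $n_\lambda = (n_1,\dots,n_r)$. The conjugate $\lambda'$ has length $\lambda_1$ and its own degree sequence $n_{\lambda'}$. The cleanest route is to use the known identity relating Wronskians of Hermite polynomials for conjugate partitions: there is a classical result (see e.g.\ \cite{GomezUllate_Grandati_Milson-14}) stating that $\Wr[\He_{n_1},\dots,\He_{n_r}]$ and $\Wr[\He_{m_1},\dots,\He_{m_s}]$ (with $m_j$ the degree sequence of $\lambda'$) agree up to the substitution $x \mapsto -ix$ and an explicit constant. Concretely, I would prove that
\begin{gather*}
\Wr[\He_{n_1},\dots,\He_{n_r}](x) = c_\lambda\, \Wr[\He_{m_1},\dots,\He_{m_s}](-ix)
\end{gather*}
for an explicit constant $c_\lambda$, by expressing both sides via Lemma~\ref{lem:WronskianPolynomial} and comparing: both are polynomials, the degrees match since $|\lambda| = |\lambda'|$, and the leading coefficients are the respective Vandermonde determinants $\Delta(n_\lambda)$ and $\Delta(n_{\lambda'})$. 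Dividing through by $\Delta(n_\lambda)$ as in \eqref{eq:OmegaHe}, the constant is forced to be exactly what makes $\He_\lambda(x) = i^{|\lambda|}\He_{\lambda'}(-ix)$, since both sides are monic of degree $|\lambda|$ (the factor $i^{|\lambda|}$ being precisely what restores monicity after $x\mapsto -ix$ multiplies the leading term by $(-i)^{|\lambda|}$).

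An alternative and perhaps more self-contained route, which I would prefer if the classical conjugate-Wronskian identity is not readily citable, is an induction using Theorem~\ref{thm:RecurrenceRelation}. The recurrence expresses $F_\lambda \He_\lambda$ via $\He_\mu$ for $\mu \in T_1(\lambda)$ and $\He_\rho$ for $\rho \in \tilde{T}_2(\lambda)$. Conjugation is an involution on partitions that sends $T_1(\lambda)$ bijectively to $T_1(\lambda')$ and, by \eqref{eq:verticalhorizontalconjugate}, swaps $T_2^h(\lambda)$ with $T_2^v(\lambda')$ and vice versa, hence flips $\sgn(\rho,\lambda)$; it also preserves $F_\lambda = F_{\lambda'}$ and $|\lambda| = |\lambda'|$. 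Applying the induction hypothesis $\He_\mu(x) = i^{|\mu|}\He_{\mu'}(-ix)$ to each term on the right, the factor $x$ becomes $i \cdot(-ix)$ up to sign bookkeeping, and one checks the $i$-powers and the sign flip from $\sgn$ conspire to reproduce the recurrence for $\lambda'$ evaluated at $-ix$, scaled by $i^{|\lambda|}$. The base cases $\He_\varnothing = 1$ and $\He_{(1)}(x) = x$ are immediate since $(1)' = (1)$ and $i^1\cdot(-ix) = x$.

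\textbf{Main obstacle.} The delicate part in the inductive approach is the sign and power-of-$i$ bookkeeping in the $\tilde{T}_2$ sum: one must verify that the change from $\sgn(\rho,\lambda) = \mp 1$ to $\sgn(\rho',\lambda') = \pm 1$ exactly cancels against the $(|\lambda|-1)$ coefficient and the $i^{|\rho|} = i^{|\lambda|-2} = -i^{|\lambda|}$ factor, so that after dividing by the common $i^{|\lambda|-2}$ from the $x$-term the identity closes. In the Wronskian-identity approach, the obstacle is instead pinning down the constant $c_\lambda$ cleanly — but the monicity argument via Lemma~\ref{lem:WronskianPolynomial} sidesteps any explicit computation, which is why I would lead with that route and only fall back on the induction if needed.
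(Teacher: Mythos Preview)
Your inductive route via Theorem~\ref{thm:RecurrenceRelation} is exactly the approach the paper takes, and your sketch is correct: conjugation bijects $T_1(\lambda)$ with $T_1(\lambda')$, swaps $T_2^h$ with $T_2^v$ via~\eqref{eq:verticalhorizontalconjugate} (hence flips $\sgn$), preserves $F_\cdot$ and $|\cdot|$, and the powers of $i$ line up just as you describe (with $x = i\cdot(-ix)$ and $i^{|\lambda|-2} = -i^{|\lambda|}$ absorbing the sign flip in the $\tilde T_2$ sum).

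Your first route, however, has a genuine gap as written. You say you would prove the Wronskian identity
\[
\Wr[\He_{n_1},\dots,\He_{n_r}](x) = c_\lambda\, \Wr[\He_{m_1},\dots,\He_{m_s}](-ix)
\]
``by expressing both sides via Lemma~\ref{lem:WronskianPolynomial} and comparing''. But Lemma~\ref{lem:WronskianPolynomial} only records the degree and leading coefficient of such a Wronskian; it says nothing about the lower-order terms. Two polynomials sharing degree and leading coefficient need not be proportional, so this comparison cannot establish the identity --- it can only determine the constant $c_\lambda$ \emph{once} the identity is already known. If instead you mean to \emph{cite} the conjugate-Wronskian identity from the literature and use monicity just to fix $c_\lambda$, that is legitimate but no longer a proof from the tools of this paper; it is essentially the external result of~\cite{Curbera_Duran} to which the paper also refers. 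Since your inductive route is self-contained, correct, and matches the paper's own argument, you should lead with it rather than treat it as a fallback.
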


This result was essentially already proven in \cite[Theorem 1.2]{Curbera_Duran}, but is now an easy corollary of the generating recurrence relation; indeed, one proves this claim inductively and uses the relation in~\eqref{eq:verticalhorizontalconjugate}.

\section{Notes and remarks}\label{sectionnotesandremarks}
In this section we make a few notes and remarks on the results described above. First of all, we discuss our results in terms of classical Hermite polynomials. Subsequently, we provide some thoughts on the derivatives of Wronskian Hermite polynomials. Furthermore, we describe how our results should be interpreted for some special cases of Wronskians Hermite polynomials that are studied in the existing literature. We end this section by a~brief discussion of numerical implementation of the generating recurrence relation and a description of how this recurrence formula was obtained.

\subsection{The results in terms of classical Hermite polynomials}\label{classicalHermite}

In the preceding sections, we stated our main results on Wronskian Hermite polynomials. These polynomials are defined in Definition~\ref{def:WronskianHermite} using the $\He$ polynomials. In the literature, Wronskian Hermite polynomials are usually defined using the \textit{classical Hermite polynomials}, i.e., the $H$ polynomials. As the~$\He$ polynomials are just a rescaling of the classical $H$ polynomials, see \eqref{eq:HandHe} below, all our results can easily be translated in terms of $H$ polynomials.

\smallskip

{\bf Definition with $\boldsymbol{H}$ polynomials.}
The classical Hermite polynomials are defined \cite[formula~(22.7.13)]{Abramowitz_Stegun} by putting $H_0(x)=1$, $H_1(x)=2x$ and subsequently
\begin{gather}\label{eq:recursionforclassicalhermite}
H_n(x)=2xH_{n-1}(x) - \frac{n-1}{2} H_{n-2}(x), \qquad n\geq 2.
\end{gather}
If we compare this recurrence relation with the recurrence relation for $\He$ polynomials~\eqref{eq:recursionforstandardhermite}, we find the identity
\begin{gather}\label{eq:HandHe}
H_n(x)=2^{\frac{n}{2}} \He_n\big(\sqrt{2}x\big),
\end{gather}
for all $n$, see also \cite[formula~(22.5.19)]{Abramowitz_Stegun}. Similar to Definition \ref{def:WronskianHermite}, we define \textit{classical Wronskian Hermite polynomials} as
\begin{gather}\label{eq:OmegaH}
H_{\lambda}:=\frac{\Wr [H_{n_1},\dots,H_{n_r}]}{\Delta(n_\lambda)}.
\end{gather}
If we apply Lemma \ref{lem:WronskianPolynomial}, we get that $H_\lambda$ is a polynomial of degree $|\lambda|$ and has leading coefficient $2^{|\lambda|+\frac{r(r-1)}{2}}$. Indeed, as $H$ polynomials are not monic, the leading coefficient of $H_{\lambda}$ is obtained by combining all leading coefficients of the Hermite polynomials in the Wronskian of \eqref{eq:OmegaH}. Furthermore, taking the trivial partition $\lambda=(n)$, we have that $H_\lambda=H_n$. Finally, using elementary properties of Wronskians and \eqref{eq:HandHe}, we have that for any partition $\lambda$, it holds that
\begin{align}\label{eq:OmegaHandOmegaHe}
H_{\lambda}(x)
= 2^{\frac{|\lambda|+r(r-1)}{2}} \He_{\lambda}\big(\sqrt{2}x\big), \qquad x \in \mathbb{C}.
\end{align}
By this relation we see that all results concerning $\He_{\lambda}$ polynomials can be translated to $H_{\lambda}$ polynomials.

\smallskip

{\bf Results in terms of $\boldsymbol{H}$ polynomials.}
In Section~\ref{sectionmainresults}, we stated the generating recurrence relation for Wronskian Hermite polynomials, see Theorem~\ref{thm:RecurrenceRelation}. Since the prefactor in~\eqref{eq:OmegaHandOmegaHe} depends on~$r$, the length of the partition $\lambda$, we see that combining the above relation bet\-ween~$\He_\lambda$ and~$H_\lambda$ with the generating recurrence relation does yield a recurrence relation, but the recurrence coefficients depend on the length of the involved partitions. Indeed, not all partitions that appear in the generating recurrence relation have the same length. These complications are circumvented when one studies the monic version of the polynomials~$H_\lambda$, i.e.,
\begin{gather*}
\widehat{H}_{\lambda}:=\frac{H_\lambda}{2^{|\lambda|+\frac{r(r-1)}{2}}}.
\end{gather*}
These polynomials are certainly not generalizations of the classical Hermite polynomials, and certainly do not have integer coefficients, i.e., they violate the analogous statement of Conjecture~\ref{conj:integercoefficients}. However, if we rewrite~\eqref{eq:OmegaHandOmegaHe} in terms of $\widehat{H}_{\lambda}$, we get
\begin{gather*}
\widehat{H}_{\lambda}(x)
= 2^{-\frac{|\lambda|}{2}} \He_{\lambda}\big(\sqrt{2}x\big), \qquad x \in \mathbb{C},
\end{gather*}
which can be used to translate our results for the polynomials $\He_\lambda$ to the polynomials $\widehat{H}_\lambda$.
\begin{enumerate}\itemsep=0pt
 \item The generating recurrence relation in Theorem \ref{thm:RecurrenceRelation}:
 \begin{gather*}
 F_{\lambda} \widehat{H}_{\lambda}(x) = x \sum_{\mu\in T_1(\lambda)} F_{\mu} \widehat{H}_{\mu}(x)
 + \frac{|\lambda|-1}{2} \sum_{\rho\in \tilde{T}_2(\lambda)} \sgn(\rho,\lambda) F_{\rho} \widehat{H}_{\rho}(x).
 \end{gather*}
 Together with the constraints
 \begin{gather*}
 \widehat{H}_\varnothing=H_0\equiv 1, \qquad \widehat{H}_{(1)}(x)=\tfrac{1}{2} H_1(x)=x,
 \end{gather*}
 this generates all the polynomials $\widehat{H}_\lambda$.

 \item The top-down recurrence relation in Theorem \ref{thm:topdownrecurrence}:
 \begin{gather*}
 x (\lvert \lambda\rvert+1) F_\lambda \widehat{H}_\lambda(x)=\sum_{\substack{\gamma \\ \lambda \in T_1(\gamma)}} F_\gamma \widehat{H}_\gamma(x).
 \end{gather*}
 \item The average in Theorem \ref{thm:Average}:
 \begin{gather*}
 \sum_{\lambda\vdash n} \frac{F_\lambda^2}{n!} \widehat{H}_{\lambda}(x) = x^n.
 \end{gather*}
\end{enumerate}
The other corollaries of the generating recurrence relation, i.e., Proposition~\ref{prop:OmegaDer}, Lemma~\ref{lem:Omega-x} and Proposition~\ref{prop:OmegaConj}, also hold when replacing the polynomial $\He_\lambda$ by $\widehat{H}_\lambda$. Regarding Conjecture~\ref{conj:integercoefficients} in terms of $H$ polynomials, we believe that $H_\lambda\in \mathbb{Z}[x]$ for any partition $\lambda$, which we checked by the computer software Maple up to all partitions~$\lambda$ such that $|\lambda|\leq35$. It is easy to see that $\widehat{H}_\lambda \not\in \mathbb{Z}[x]$, take for example the trivial partition $\lambda=(n)$, with $n > 1$.

The authors believe that it is more natural to consider the results in this article for the monic polynomials~$\He_\lambda$. Indeed, the polynomials~$\He_\lambda$ generalize the ordinary Hermite polynomials~$\He_n$ and are expected to have integer coefficients. When working with the classical Hermite polynomials, one has to make the distinction between the non-monic polynomials~$H_\lambda$ and the monic polynomials~$\widehat{H}_\lambda$.

\subsection{Derivatives of Wronskian Hermite polynomials}\label{subsectionnotesderivative}
In Proposition \ref{prop:OmegaDer} we provided an expression for the derivative of the Wronskian Hermite polynomial associated to the partition $\lambda$ in terms of the set $T_1(\lambda)$. There are a few remarks to be made.

\smallskip

{\bf The appearance of the derivative in the generating recurrence relation.}
Combining Theorem \ref{thm:RecurrenceRelation} and Proposition \ref{prop:OmegaDer} yields that we have \begin{gather}\label{eq:RecurrenceWithDerivative}
 F_{\lambda} \He_{\lambda}(x)
 = \frac{x}{|\lambda|} F_{\lambda} \He'_{\lambda}(x)
 + (|\lambda|-1) \sum_{\rho\in \tilde{T}_2(\lambda)} \sgn(\rho,\lambda) F_{\rho} \He_{\rho}(x).
\end{gather}
This should be compared to the alternative form of the recurrence for the ordinary Hermite polynomials
\begin{gather*}
 \He_n(x)=\frac{x}{n}\He'_{n}(x)-(n-1)\He_{n-2}(x).
\end{gather*}
Identity \eqref{eq:RecurrenceWithDerivative} might be useful for tackling the Veselov conjecture. This open conjecture states that $\He_{\lambda}(x)$ only has simple zeros, except possibly at the origin \cite{Felder_Hemery_Veselov}. The (possibly zero) multiplicity of the zero at the origin is well-understood, see for example \cite{GarciaFerrero_GomezUllate}.

\smallskip

{\bf The first derivative.} The proof of Proposition \ref{prop:OmegaDer} in Section~\ref{sectionproofderivative} uses the generating recurrence relation and a combinatorial lemma. However, one can also see in a direct manner that the derivative of the Wronskian associated to the partition $\lambda$ can be described by the set $T_1(\lambda)$. Namely, observe that a determinant is multi-linear in the columns, whence the derivative of the Wronskian of a set of polynomials is a sum of Wronskians, i.e.,
\begin{gather*}
\left(\Wr[P_{n_1},P_{n_2},\dots,P_{n_r}]\right)'=\sum_{i=1}^{r} \Wr[P_{n_1},\dots,P_{n_{i-1}}, P'_{n_{i}},P_{n_{i+1}},\dots,P_{n_r}],
\end{gather*}
for polynomials $P_{n_1},P_{n_2},\dots,P_{n_r}$. Since we are working with Hermite polynomials (which satis\-fy~\eqref{eq:DerHe}), in our case each term in this sum has exactly the same degree sequence as the original degree sequence except for one coordinate. In this coordinate the degree is precisely one smaller. If one carries out the calculations, one indeed sees the coefficients as stated in Proposition~\ref{prop:OmegaDer}.

\smallskip

{\bf Uniqueness of the coefficients.} In Proposition \ref{prop:OmegaDer} we claim two things. Firstly, the derivative of the Wronskian associated to the partition $\lambda$ is in the span of the polynomials associated to the $\mu\in T_1(\lambda)$. Secondly, we provide explicit coefficients for a linear combination. However, we have not claimed that the polynomials associated to the $\mu\in T_1(\lambda)$ are linearly independent. Hence, there might be other coefficients such that the same result applies, although we do conjecture that these polynomials are indeed linearly independent for all $\lambda$. We have checked this conjecture by using the computer software Maple for all partitions of degree $\lvert \lambda \rvert \leq 35$.

\subsection{Special cases of Wronskian Hermite polynomials}\label{subsectionnotesspecialcases}

{\bf Hermite polynomials.} The ordinary Hermite polynomials correspond to the trivial partitions $\lambda=(n)$. One can check that the generating recurrence relation in Theorem~\ref{thm:RecurrenceRelation} reduces to the well-known three term recurrence relation~\eqref{eq:recursionforstandardhermite} (or~\eqref{eq:recursionforclassicalhermite}) by noting that $T_1((n))=\{(n-1)\}$, $T_2^h((n))=\{(n-2)\}$ and $T_2^v((n))=\varnothing$. Likewise, all the other results discussed in Section~\ref{sec:Corollaries} reduce to the well-known properties of Hermite polynomials, except for the top-down recurrence relation. Indeed, we have that the partition $(n)$ is also in $T_1((n,1))$ and the Wronskian Hermite polynomial associated to the partition $(n,1)$ does not have a classical counterpart.

\smallskip

{\bf Rational solutions of the fourth Painlev\'e equation.} In the literature \cite{Clarkson-3,Clarkson-1,Clarkson-2,Noumi_Yamada,Okamoto,VanAssche} Wronskian Hermite polynomials have been studied for their connection to the fourth Painlev\'e equation. Those polynomials are in fact special cases of the Wronskian Hermite polynomials defined in this paper, i.e., they correspond to the choice of specific partitions. We discuss two of such specific partitions.

The first example is the case where the Young diagram of the partition $\lambda$ has a rectangular shape, i.e., $\lambda\vdash rn$ with $\lambda=(n,n,\dots,n)$. These polynomials are discussed in \cite[Section~4]{Clarkson-2} and are often called generalized Hermite polynomials. Moreover, for these specific partitions, the Wronskian can be written as a Hankel determinant, see for example equation (6.12) in \cite{VanAssche}. We have the following result due to the generating recurrence relation.

\begin{Proposition}
 Let $\lambda=(n,n,\dots,n)$ be a partition of length $r\geq1$ with $n\geq3$. Then
 \begin{gather*}
 \He_{\lambda}(x) = x \cdot \He_{(n,\dots,n,n-1)}(x) + \frac{(r-1)(n+1)}{2} \He_{(n,\dots,n,n-1,n-1)}(x) \\
\hphantom{\He_{\lambda}(x) =}{} - \frac{(r+1)(n-1)}{2} \He_{(n,\dots,n,n-2)}(x).
 \end{gather*}
\end{Proposition}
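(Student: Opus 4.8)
The plan is to apply the generating recurrence relation of Theorem~\ref{thm:RecurrenceRelation} directly to the rectangular partition $\lambda=(n,n,\dots,n)$ of length $r$, and to identify explicitly the sets $T_1(\lambda)$, $T_2^h(\lambda)$, $T_2^v(\lambda)$ together with the relevant $F_\mu$ values. The first step is combinatorial: for a rectangular Young diagram, the only removable corner is the bottom-right cell, so $T_1(\lambda)=\{(n,\dots,n,n-1)\}$ is a singleton. Removing two horizontally adjacent cells can only be done in the last row, giving $T_2^h(\lambda)=\{(n,\dots,n,n-2)\}$ (this requires $n\geq 2$, and the hypothesis $n\geq 3$ guarantees the result is still a genuine partition with positive parts). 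Removing two vertically adjacent cells can only be done in the last column, i.e.\ from the bottom two rows, giving $T_2^v(\lambda)=\{(n,\dots,n,n-1,n-1)\}$ when $r\geq 2$, and $T_2^v(\lambda)=\varnothing$ when $r=1$ (which is consistent with the stated formula, since the coefficient $(r-1)(n+1)/2$ then vanishes). With $\sgn=-1$ on $T_2^h$ and $+1$ on $T_2^v$, and $|\lambda|-1=rn-1$, Theorem~\ref{thm:RecurrenceRelation} reads
\begin{gather*}
F_\lambda \He_\lambda(x) = x\,F_{(n,\dots,n,n-1)}\He_{(n,\dots,n,n-1)}(x)
 + (rn-1)\bigl(F_{(n,\dots,n,n-1,n-1)}\He_{(n,\dots,n,n-1,n-1)}(x) - F_{(n,\dots,n,n-2)}\He_{(n,\dots,n,n-2)}(x)\bigr).
\end{gather*}

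The second step is to compute the four numbers $F_\lambda$, $F_{(n,\dots,n,n-1)}$, $F_{(n,\dots,n,n-1,n-1)}$, $F_{(n,\dots,n,n-2)}$ and simplify the ratios. I would use formula~\eqref{eq:Flambda}, $F_\mu = (-1)^{r'(r'-1)/2}\,|\mu|!\,\Delta(n_\mu)/\prod_i (n_\mu)_i!$, where $r'$ is the length of $\mu$. For the rectangle, the degree sequence is $n_\lambda=(n+r-1,n+r-2,\dots,n+1,n)$, a block of consecutive integers, which makes $\Delta(n_\lambda)$ and $\prod (n_\lambda)_i!$ very clean. The partition $(n,\dots,n,n-1)$ has the same length $r$ and degree sequence $(n+r-1,\dots,n+1,n-1)$, differing from $n_\lambda$ only in the last entry ($n-1$ instead of $n$); similarly $(n,\dots,n,n-2)$ has last entry $n-2$. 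The partition $(n,\dots,n,n-1,n-1)$ has length $r$ as well (assuming $r\geq 2$), with degree sequence $(n+r-1,\dots,n+2,n,n-1)$. In each case the ratio $F_\mu/F_\lambda$ collapses to a short product: changing one entry of the degree sequence from $a$ to $b$ multiplies $\Delta$ by a product of factors $(b-n_j)/(a-n_j)$ over the other entries and multiplies $1/\prod (n_i)!$ by $a!/b!$, and because the degree sequence consists of consecutive integers these telescope. Carrying this out should yield
\[
\frac{F_{(n,\dots,n,n-1)}}{F_\lambda}=1,\qquad
\frac{(rn-1)F_{(n,\dots,n,n-1,n-1)}}{F_\lambda}=\frac{(r-1)(n+1)}{2},\qquad
\frac{(rn-1)F_{(n,\dots,n,n-2)}}{F_\lambda}=\frac{(r+1)(n-1)}{2},
\]
after which substituting back gives exactly the claimed identity.

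The main obstacle is purely bookkeeping in the third ratio computation: the partition $(n,\dots,n,n-1,n-1)$ has a degree sequence that differs from $n_\lambda$ in its last \emph{two} entries (not one), so its Vandermonde factor involves a double change and one must be careful about the interaction term between the two altered entries, as well as about the factorials in the denominator. One has to verify that the factor $(rn-1)$ coming from $|\lambda|-1$ cancels correctly against the denominators $\prod (n_i)!$ to produce the clean rational coefficients $(r-1)(n+1)/2$ and $(r+1)(n-1)/2$; it is here that the hypothesis $n\geq 3$ matters, ensuring all partitions involved are well-defined and that no degenerate coincidence of degree-sequence entries occurs. An alternative, essentially equivalent route would be to apply Theorem~\ref{thm:RecurrenceRelation} with the degree sequence entries written out and match leading coefficients to fix normalizations, but the $F_\mu$-ratio approach via~\eqref{eq:Flambda} is the most direct. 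Finally I would note as a sanity check that for $r=1$ the identity reduces to $\He_{(n)}(x)=x\He_{(n-1)}(x)-(n-1)\He_{(n-2)}(x)$, i.e.\ the ordinary Hermite recurrence~\eqref{eq:recursionforstandardhermite}, confirming the formula in the base case.
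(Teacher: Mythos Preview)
Your proposal is correct and follows exactly the approach of the paper: identify $T_1(\lambda)$ and $\tilde T_2(\lambda)$ for the rectangular partition, apply Theorem~\ref{thm:RecurrenceRelation}, and compute the $F$-ratios via~\eqref{eq:Flambda}. The paper's proof is in fact just a two-line sketch (``the corresponding coefficients can be easily checked by using~\eqref{eq:Flambda}''), so your write-up is more detailed than what the authors provide, but the method is identical.
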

\begin{proof}
 For a rectangular shaped Young diagram, we have
 \begin{gather*}
 T_1(\lambda)=\{(n,\dots,n,n-1)\}, \\
 \tilde{T}_2(\lambda)=\{(n,\dots,n,n-1,n-1),(n,\dots,n,n-2)\}.
 \end{gather*}
 The corresponding coefficients can be easily checked by using \eqref{eq:Flambda}.
\end{proof}

We remark that the previous recurrence formula does not describe the Wronskian Hermite polynomials associated to rectangular partitions in terms of other rectangular partitions; i.e., this class is not stable under the recurrence relation. The result for Wronskians Hermite polynomials using the $H$ polynomials, is given by
\begin{gather*}
\widehat{H}_{\lambda}(x) = x \cdot \widehat{H}_{(n,\dots,n,n-1)}(x)+ \frac{(r-1)(n+1)}{4} \widehat{H}_{(n,\dots,n,n-1,n-1)}(x)\\
\hphantom{\widehat{H}_{\lambda}(x) =}{} - \frac{(r+1)(n-1)}{4} \widehat{H}_{(n,\dots,n,n-2)}(x).
\end{gather*}

The second example of specific Wronskian Hermite polynomials that appear in rational solutions of the Painlev\'e $IV$ equation are the generalized Okamoto polynomials \cite{Clarkson-2,Noumi_Yamada,Okamoto}. These Wronskian Hermite polynomials correspond to partitions of one of the two following types; either
\begin{gather*}
\lambda =(m+2n,m+2n-2,\dots,m+2,m,m,m-1,m-1,\dots,1,1),
\end{gather*}
or
\begin{gather*}
\lambda =(m+2n-1,m+2n-3,\dots,m+1,m,m,m-1,m-1,\dots,1,1),
\end{gather*}
where $m$ is a positive integer and $n$ is a non-negative integer. In both cases, the elements in $\tilde{T}_2(\lambda)$ are easily described, but again, the obtained recurrence relation does not express generalized Okamoto polynomials in terms of generalized Okamoto polynomials of lower degree.

\smallskip

{\bf Triangular Young diagram.} The last special case that we cover here is the case of \emph{staircase} partitions; these are partitions of the form
\begin{gather*}
\lambda=(r,r-1,r-2,\dots,1).
\end{gather*}
A special feature of these partitions is that $\tilde{T}_2(\lambda)=\varnothing$. Moreover, those partitions are the only ones with this property. The set $T_1(\lambda)$ is maximal and consists of $r$ elements. As a final remark, we note that these polynomials have a straightforward explicit expression.

\begin{Lemma}\label{lem:OmegaTrap}
 For any positive integer $r$ we have
 \begin{gather*}
 \He_{(r,r-1,\dots,1)}(x)=\widehat{H}_{(r,r-1,\dots,1)}(x)= x^{\frac{r(r+1)}{2}}.
 \end{gather*}
\end{Lemma}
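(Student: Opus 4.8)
The plan is to prove the statement by induction on $r$, using the generating recurrence relation of Theorem~\ref{thm:RecurrenceRelation}. The base case $r=1$ is immediate: $\He_{(1)}(x)=\He_1(x)=x=x^{\frac{1\cdot 2}{2}}$. For the inductive step, suppose the claim holds for all staircase partitions of length less than $r$, and consider $\lambda=(r,r-1,\dots,1)$, which has $|\lambda|=\frac{r(r+1)}{2}$. First I would establish the combinatorial fact claimed in the surrounding text, namely that $\tilde T_2(\lambda)=\varnothing$ for a staircase partition: since $\lambda_i-\lambda_{i+1}=1$ for all $i$, removing two horizontally adjacent cells from row $i$ would require $\lambda_i\ge 2+\lambda_{i+1}$ (to keep a valid partition), which fails; and removing two vertically adjacent cells in column $j$ from rows $i,i+1$ would require $\lambda_i=\lambda_{i+1}$, which also fails. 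Hence both $T_2^h(\lambda)$ and $T_2^v(\lambda)$ are empty, so the second sum in~\eqref{eq:RecurrenceRelationHe} vanishes.

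Next I would identify $T_1(\lambda)$. Removing a single corner cell from the staircase diagram: the only removable cell in row $i$ is the last one, and removing it yields a valid partition precisely when $\lambda_i-1\ge\lambda_{i+1}$, i.e., always (with equality, so the cell in row $i$ of the staircase is always a corner), for $i=1,\dots,r$. Removing the cell from row $i$ produces the partition $\mu^{(i)}=(r,r-1,\dots,r-i+2,\,r-i,\,r-i,\,r-i-1,\dots,1)$ — that is, the staircase with one row shortened by one. Thus $T_1(\lambda)$ has exactly $r$ elements. With the second sum gone, the recurrence~\eqref{eq:RecurrenceRelationHe} reads $F_\lambda\He_\lambda(x)=x\sum_{i=1}^r F_{\mu^{(i)}}\He_{\mu^{(i)}}(x)$.

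The main obstacle, and the real content of the argument, is that the partitions $\mu^{(i)}$ are \emph{not} staircase partitions for $i<r$ (only $\mu^{(r)}=(r,r-1,\dots,2)$, shifted, relates cleanly to a smaller staircase), so the induction hypothesis does not apply to them directly. The cleanest route I see is to avoid needing $\He_{\mu^{(i)}}$ individually and instead use the complementary \textbf{top-down recurrence} of Theorem~\ref{thm:topdownrecurrence} applied to the smaller staircase $\nu=(r-1,r-2,\dots,1)$, for which the induction hypothesis gives $\He_\nu(x)=x^{\frac{(r-1)r}{2}}$. The partitions $\gamma$ with $\nu\in T_1(\gamma)$ are obtained from $\nu$ by \emph{adding} one cell; adding a cell to row $i$ of the staircase $\nu$ (for $i=1,\dots,r-1$, where it is addable) yields exactly the partitions $\mu^{(i)}$ above with the roles reindexed, and adding a new row of length $1$ yields $\lambda=(r,r-1,\dots,1)$ itself. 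So Theorem~\ref{thm:topdownrecurrence} for $\nu$ gives
\begin{gather*}
x\left(\tfrac{(r-1)r}{2}+1\right)F_\nu\,x^{\frac{(r-1)r}{2}}
= F_\lambda\He_\lambda(x)+\sum_{i}F_{\gamma_i}\He_{\gamma_i}(x),
\end{gather*}
where the $\gamma_i$ are the non-staircase one-cell extensions. To close the induction I would then combine this with the $T_1$-expansion of $\He_\lambda$ above, using~\eqref{eq:Flambda} to compute all the $F$-values explicitly (the degree sequence of a staircase is $(2r-1,2r-3,\dots,1)$, so $\Delta$ and the factorials are completely explicit), and check the scalar identity. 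In fact the slickest finish may be purely inductive on the $\mu^{(i)}$'s after all, proving by a \emph{secondary} induction a formula for $\He_{\mu^{(i)}}$; but I expect the bookkeeping to collapse, because every term must ultimately be a monomial. The crux is therefore organizing the induction so that one never has to evaluate a genuinely non-monomial intermediate polynomial — either by leveraging the top-down recurrence as above, or by a direct Wronskian computation: $\Wr[\He_1,\He_3,\He_5,\dots,\He_{2r-1}]$ divided by its Vandermonde leading coefficient, where one shows by column operations (using $\He_n'=n\He_{n-1}$ from~\eqref{eq:DerHe}) that the Wronskian is a monomial times that Vandermonde. I would present the Wronskian-plus-column-operations argument as the primary proof, since it is self-contained and makes the monomial conclusion transparent, and mention the recurrence-based derivation as the promised corollary of Theorem~\ref{thm:RecurrenceRelation}.
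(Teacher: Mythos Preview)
Your top-down recurrence step contains a genuine error. You claim that the one-cell extensions $\gamma$ of the smaller staircase $\nu=(r-1,r-2,\dots,1)$ are exactly the partitions $\mu^{(i)}\in T_1(\lambda)$, together with $\lambda$ itself. But this is impossible on size grounds: the $\gamma$'s have $|\gamma|=|\nu|+1=\tfrac{(r-1)r}{2}+1$, whereas the $\mu^{(i)}$ have $|\mu^{(i)}|=|\lambda|-1=\tfrac{r(r+1)}{2}-1$ and $|\lambda|=\tfrac{r(r+1)}{2}$. These three numbers are all different for $r\ge 2$. In particular, adding a new row of length $1$ to $\nu$ gives $(r-1,r-2,\dots,1,1)$, not $\lambda$. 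So Theorem~\ref{thm:topdownrecurrence} applied to $\nu$ produces a relation among polynomials of degree $\tfrac{(r-1)r}{2}+1$ that has no overlap with the $T_1(\lambda)$-sum you obtained from Theorem~\ref{thm:RecurrenceRelation}; the two identities cannot be combined as you suggest, and no amount of bookkeeping with the $F$-values will repair this.

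Your observation that $\tilde T_2(\lambda)=\varnothing$ is correct and is actually the whole story if you feed it into the right identity: by \eqref{eq:RecurrenceWithDerivative} (which combines Theorem~\ref{thm:RecurrenceRelation} with Proposition~\ref{prop:OmegaDer}) one gets $F_\lambda\He_\lambda(x)=\tfrac{x}{|\lambda|}F_\lambda\He_\lambda'(x)$, i.e.\ the Euler-type ODE $x\He_\lambda'(x)=|\lambda|\,\He_\lambda(x)$, whose only monic polynomial solution of degree $|\lambda|$ is $x^{|\lambda|}$. That is the clean recurrence-based proof you were reaching for. The paper itself omits the proof, indicating only a direct determinant computation (your final ``Wronskian-plus-column-operations'' suggestion), so your fallback approach matches the paper's, but the recurrence-based route you spent most of the proposal on does not go through as written.
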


The proof consists of simple computations using elementary properties of determinants and is therefore omitted.

\subsection{Exceptional Hermite polynomials}
Exceptional Hermite polynomials \cite{Duran-Hermite,GomezUllate_Grandati_Milson-14,Haese-Hill_Hallnas_Veselov} appeared as a class of exceptional orthogonal polynomials. They are defined as
\begin{gather*}
H_{\lambda,n}
= \Wr[H_{n-|\lambda|+r},H_{n_1},\dots,H_{n_r}],
\qquad
n\in\mathbb{N}_{\lambda},
\end{gather*}
for any partition $\lambda$ with degree sequence $n_{\lambda}=(n_1,\dots,n_r)$ and where
\begin{gather*}
\mathbb{N}_{\lambda}
= \{ n \in \mathbb{N} \,|\, n \geq |\lambda|-r \text{ and } n-|\lambda|+r \neq n_i \text{ for all $i=1,\dots,r$} \}.
\end{gather*}
By Lemma \ref{lem:WronskianPolynomial} the polynomials $H_{\lambda,n}$ has degree $n$ for every $n\in\mathbb{N}_{\lambda}$. The set $\mathbb{N}_{\lambda}$ is cofinite, i.e., its complement within $\mathbb{N}$ is finite. The remarkable property (for \emph{even} partitions \cite{GomezUllate_Grandati_Milson-14}) is that although there is only a cofinite number of polynomials, the set $\{H_{\lambda,n}\}_{n\in \mathbb{N}_\lambda}$ is a (complete) orthogonal basis of the underlying Hilbert space. This is precisely what makes them \emph{exceptional}. An open and interesting problem within this class of polynomials is to find efficient ways to produce recurrence formulas for these polynomials. The aim of this section is to point out the implications of our main result for the theory of these polynomials. To this end, let $\widehat{H}_{\lambda,n}$ denote the monic exceptional Hermite polynomial. We can apply our generating recurrence relation to the class of exceptional Hermite polynomials. Take $n\in\mathbb{N}_{\lambda}$ such that $n-|\lambda|\geq \lambda_1+2$. Then $n-1,n-2\in\mathbb{N}_{\lambda}$. Define $(\lambda,n)$ as the partition of the elements of $\lambda$ and $n-|\lambda|$, i.e., $(\lambda,n)=(n-|\lambda|,\lambda_1,\lambda_2,\dots,\lambda_r)$. Furthermore, note that for any $\mu\in T_1(\lambda)$ and any $\rho \in \tilde{T}_2(\lambda)$, we have $n-1\in \mathbb{N}_\mu$ and $n-2\in \mathbb{N}_\rho$. Applying the generating recurrence relation gives
\begin{gather}
F_{(\lambda,n)} \widehat{H}_{\lambda,n}(x) = x F_{(\lambda,n-1)} \widehat{H}_{\lambda,n-1}(x)- \frac{n-1}{2} F_{(\lambda,n-2)} \widehat{H}_{\lambda,n-2}(x)\nonumber\\
\hphantom{F_{(\lambda,n)} \widehat{H}_{\lambda,n}(x) =}{}+ x \sum_{\mu\in T_1(\lambda)} F_{(\mu,n-1)} \widehat{H}_{\mu,n-1}(x) \nonumber\\
\hphantom{F_{(\lambda,n)} \widehat{H}_{\lambda,n}(x) =}{}+ \frac{n-1}{2} \sum_{\rho\in \tilde{T}_2(\lambda)} \sgn(\rho,\lambda) F_{(\rho,n-2)} \widehat{H}_{\rho,n-2}(x).\label{eq:RecurrenceExceptionalH}
\end{gather}
So, we see that the exceptional Hermite polynomial associated to the partition $\lambda$ of degree $n$ can be expressed in terms of a set of exceptional Hermite polynomials of degrees $n-1$ and $n-2$, but not only those in the family associated to the partition $\lambda$, but also those in the family associated to the partitions in $T_1(\lambda)$ and $\tilde{T}_2(\lambda)$.

Finally, if $n-|\lambda| < \lambda_1+2$, similar expressions as \eqref{eq:RecurrenceExceptionalH} can be derived. As they are more complicated to write down, we do not give more details here.

Our recurrence relation in terms of exceptional Hermite polynomials differs from the already known recurrence relations \cite{Duran-Recurrence,GomezUllate_Grandati_Milson-14,Gomez-Ullate_Kasman_Kuijlaars_Milson,Miki_Tsujimoto} as we approach in the general setting of Wronskian Hermite polynomials and then specify to exceptional Hermite polynomials. Therefore, we can approach purely combinatorical and we do not anything of the theory of exceptional orthogonal polynomials. The advantage of the recurrence relation presented in this article is that we provide a closed form for the recurrence coefficients.

\subsection{Implementation of the generating recurrence relation}\label{sec:Implementation}
We like to point out an upper bound for the number of elements in the generating recurrence relation, see Theorem \ref{thm:RecurrenceRelation}. Suppose that $\lambda$ is a partition of length $r$. Then, clearly, the number of elements in $T_1(\lambda)$ is bounded by $r$. Moreover, one straightforwardly sees that $|T_1(\lambda)|=|T_1(\lambda')|$. Hence $T_1(\lambda)$ is bounded by $\min\{r,\lambda_1\}$. Likewise, the number of elements in $T_2^h(\lambda)$ is also bounded by $r$. A slightly deeper analysis shows that whenever $\mu\in T_2^v(\lambda)$ is obtained by removing a cell in the Young diagram of $\lambda$ in both row $i$ and $i+1$, we have that $\lambda_i=\lambda_{i+1}$, so that we cannot remove two blocks from $\lambda$ in row $i$ to obtain an element in $T_2^h(\lambda)$. Therefore, we have that the number of elements of $\tilde{T}_2(\lambda)$ is in fact bounded by $r$. Again, then also $\lvert \tilde{T}_2(\lambda)\rvert = \lvert \tilde{T}_2(\lambda')\rvert \leq \lambda_1$, so indeed $\lvert \tilde{T}_2(\lambda)\rvert \leq \min\{r,\lambda_1\}$ Therefore, there are at most $2\min\{r,\lambda_1\}$ elements in the generating recurrence relation. For many partitions, this number is in fact still a big overestimation, but we omit further analysis.

\subsection{Obtaining the generating recurrence relation}\label{sec:Obtaining}
As mentioned in Section~\ref{sectionmainresults}, the starting point of this research was the observation that the average of all monic Wronskians of a fixed degree (with respect to the Plancherel measure) is simply the monomial of that degree. These observations were based on explicit computations of the examples for low degrees. At some point during the search for a proof of this fact, which was verified to a high degree by using the computer software Maple, the authors wondered which of the properties of standard Hermite polynomials would still hold in a generalized version.

For a generalization of the three term recurrence relation~\eqref{eq:recursionforstandardhermite}, one first has to determine the natural generalization of the concept of 'one degree smaller' and 'two degrees smaller' as in~\eqref{eq:recursionforstandardhermite} or~\eqref{eq:recursionforclassicalhermite}. A~natural thing to do is to consider all partitions which yield a polynomial that is one or two degrees smaller. A first check using the aforementioned computer software Maple, showed that using the sets $T_1(\lambda)$ and~$T_2(\lambda)$ one can indeed find a recurrence relation. In fact, there are some degrees of freedom left when one works with the bigger set~$T_2(\lambda)$ instead of~$\tilde{T}_2(\lambda)$. A~natural choice for the coefficients for~$T_1(\lambda)$, namely the ones according to the derivative as indicated in~\eqref{eq:RecurrenceWithDerivative}, then showed that one can restrict the generating recurrence relation to using~$\tilde{T}_2(\lambda)$ instead of the whole set~$T_2(\lambda)$.

\section{Proof of Theorem \ref{thm:RecurrenceRelation}: the generating recurrence relation}\label{sectionproofrecursion}

In this section we prove our first main result: the generating recurrence relation presented in Theorem~\ref{thm:RecurrenceRelation}. A close inspection of~\eqref{eq:RecurrenceRelationHe} shows that each polynomial $\He_\lambda$ appears with the prefactor $F_\lambda$. Moreover, in the definition of the Wronskian Hermite polynomial associated to the partition $\lambda$, see~\eqref{eq:OmegaHe}, we use the degree sequence $n_\lambda$ of the partition as in~\eqref{eq:DegreeSequence}. Now we define a polynomial $\Phi[n_1,\dots,n_r]$ for a broader class of finite sequence $(n_1,\dots,n_r)\in \mathbb{Z}^r$, such that in the special case that $(n_1,\dots,n_r)$ is in fact the degree sequence of some partition $\lambda$, we have
\begin{gather}\label{eq:fromHetoPhi}
F_\lambda \He_\lambda=\Phi[n_1,\dots,n_r].
\end{gather}
Before we turn to the actual proof of the generating recurrence relation, we therefore first define the polynomials $\Phi$ such that the above is satisfied, and subsequently derive some essential basic of these polynomials. These two things are both technical, but essential for the proof of the recurrence relation.

For the definition of the $\Phi$ polynomials, we first note that it is an easy undertaking to rewrite~\eqref{eq:Flambda} for $F_\lambda$ in terms of the degree sequence $n_\lambda$ of $\lambda$; indeed, we have that
\begin{gather*}
\lvert \lambda \rvert= \sum\limits_{i=1}^r (n_\lambda)_i - \frac{r(r-1)}{2}.
\end{gather*}
 Using this, we have the rather cumbersome expression
\begin{gather*}
F_\lambda=(-1)^{\frac{r(r-1)}{2}} \cdot \frac{\left(\sum\limits_{i=1}^r (n_\lambda)_i - \frac{r(r-1)}{2}\right)! \cdot \Delta(n_{\lambda})}{\prod\limits_{i=1}^{r}(n_\lambda)_i!}.
\end{gather*}
We can now generalize this to finite sequences $(n_1,\dots,n_r)$ that are not necessarily associated to partitions.

\begin{Definition} For every $r\in \mathbb{N}$ and any finite sequence $(n_1,\dots,n_r)\in \mathbb{Z}^r$, we define the number
 \begin{gather}\label{eq:defa}
 a[n_1,\dots,n_r]:=\sum_{i=1}^r n_i - \frac{r(r-1)}{2} = \sum_{i=1}^r (n_i-r+i),
 \end{gather}
 i.e., if $(n_1,\dots,n_r)$ is the degree sequence of a partition $\lambda$, then $a[n_1,\dots,n_r]=\lvert \lambda\rvert$. Next, for any finite sequence $(n_1,\dots,n_r)$ such that $a[n_1,\dots,n_r]\geq 0$, we define the polynomial
 \begin{gather}\label{eq:defPhi}
 \Phi[n_1,\dots,n_r]=(-1)^{\frac{r(r-1)}{2}} \cdot \frac{a[n_1,\dots,n_r]!}{\prod\limits_{i=1}^{r} n_i!} \Wr[\He_{n_1},\dots,\He_{n_r}].
 \end{gather}
\end{Definition}

In the above definition, we use the convention that $\He_n\equiv 0$ whenever $n<0$, and $1/k!=0$ whenever $k<0$. Comparing the definitions~\eqref{eq:OmegaHe} and~\eqref{eq:defPhi}, we now have defined the polyno\-mials~$\Phi$ in such a way that \eqref{eq:fromHetoPhi} holds if $(n_1,\dots,n_r)$ is the degree sequence of $\lambda$, i.e., we have extended the class of polynomials that we set out to study in the generating recurrence relation.

By the definition this new class of polynomials, the following properties are trivial and therefore we omit the proof.

\begin{Proposition}\label{prop:fundamentalsPhi}
 Suppose that $r\in \mathbb{N}$ and let $(n_1,\dots,n_r)\in \mathbb{Z}^r$ such that $a[n_1,\dots,n_r]\geq 0$. Then we have the following results:
 \begin{enumerate}\itemsep=0pt
 \item[$1.$] If there is an $i\in \{1,\dots,r\}$ such that $n_i<0$, then $\Phi[n_1,\dots,n_r]\equiv 0$.
 \item[$2.$] The function $\Phi$ is anti-symmetric, i.e., if $\sigma$ is a permutation of $\{1,\dots,r\}$, then
 \begin{gather*}\Phi[n_1,\dots,n_r]=\sgn(\sigma) \Phi[n_{\sigma(1)},n_{\sigma(2)},\dots,n_{\sigma(r)}].\end{gather*}
 \item[$3.$] If $n_i=n_j$ for $i\neq j$, then $\Phi[n_1,\dots,n_r]\equiv 0$.
 \end{enumerate}
\end{Proposition}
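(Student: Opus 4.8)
The plan is to read all three statements directly off the defining formula~\eqref{eq:defPhi}, exactly as the word ``trivial'' suggests. Write $\Phi[n_1,\dots,n_r] = c[n_1,\dots,n_r]\cdot \Wr[\He_{n_1},\dots,\He_{n_r}]$ with scalar prefactor $c[n_1,\dots,n_r] = (-1)^{r(r-1)/2}\, a[n_1,\dots,n_r]!/\prod_{i=1}^r n_i!$. The whole argument is then a matter of tracking how each factor behaves under the relevant operation.

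For statement~1, suppose $n_i < 0$ for some $i$. By the stated convention $\He_{n_i}\equiv 0$, so the $i$-th column of the matrix whose determinant defines $\Wr[\He_{n_1},\dots,\He_{n_r}]$ is identically zero; hence that Wronskian is the zero polynomial and $\Phi[n_1,\dots,n_r]\equiv 0$. (Equivalently one may invoke the convention $1/n_i! = 0$, which already annihilates the prefactor $c$.)

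For statement~2, observe that $a[n_1,\dots,n_r]$ from~\eqref{eq:defa} and the product $\prod_{i=1}^r n_i!$ are both unchanged under any permutation of the entries, so $c[n_1,\dots,n_r]$ is symmetric in $(n_1,\dots,n_r)$. On the other hand, permuting the arguments of the Wronskian permutes the columns of the underlying matrix accordingly, and a column permutation multiplies a determinant by the sign of the permutation. Combining these two facts gives $\Phi[n_1,\dots,n_r] = \sgn(\sigma)\,\Phi[n_{\sigma(1)},\dots,n_{\sigma(r)}]$ for every $\sigma$, which is precisely the claimed anti-symmetry.

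Statement~3 is then immediate: if $n_i = n_j$ with $i\neq j$, applying the transposition $\sigma = (i\ j)$ leaves the sequence $(n_1,\dots,n_r)$ unchanged, so statement~2 forces $\Phi[n_1,\dots,n_r] = -\,\Phi[n_1,\dots,n_r]$ and hence $\Phi[n_1,\dots,n_r]\equiv 0$. (Alternatively, the defining matrix simply has two identical columns.) There is no genuine obstacle here; the only point demanding a little care is keeping the conventions $\He_n \equiv 0$ and $1/k! = 0$ for negative arguments straight, and with those in hand all three parts drop out in one line each — which is why the paper omits the proof.
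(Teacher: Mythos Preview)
Your proof is correct and is exactly the kind of direct verification from the definition~\eqref{eq:defPhi} that the paper has in mind; the paper in fact omits the proof entirely, remarking only that these properties are trivial consequences of the definition. Your write-up simply makes explicit the one-line observations (zero column, symmetric prefactor times antisymmetric determinant, repeated column) that justify this.
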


In the above proposition, all the involved degree sequences $(n_1,\dots,n_r)$ have the same length. It is also possible to reduce the length, as in the following statement.

\begin{Proposition}\label{prop:zeroinPhi}
 Suppose that $r\in \mathbb{N}$ and let $(n_1,\dots,n_r)\in \mathbb{Z}^r$ such that $a[n_1,\dots,n_r]\geq 0$. Then we have that
 \begin{gather*}\Phi[n_1,\dots,n_r,0]=\Phi[n_1-1,\dots,n_r-1].\end{gather*}
\end{Proposition}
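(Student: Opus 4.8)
The plan is to unravel both sides using the definition \eqref{eq:defPhi} and the elementary behaviour of Wronskians, and then match the combinatorial prefactors. First I would observe that on the right-hand side the sequence $(n_1-1,\dots,n_r-1)$ has $a[n_1-1,\dots,n_r-1] = \sum_{i=1}^r(n_i-1) - \tfrac{r(r-1)}{2} = a[n_1,\dots,n_r] - r + \tfrac{(r+1)r}{2} - \tfrac{r(r-1)}{2} - \tfrac{(r+1)r}{2} + \tfrac{r(r-1)}{2}$; more simply, on the left the augmented sequence $(n_1,\dots,n_r,0)$ has length $r+1$, so $a[n_1,\dots,n_r,0] = \sum_{i=1}^r n_i - \tfrac{(r+1)r}{2} = a[n_1,\dots,n_r] - r$, while on the right the length-$r$ sequence gives $a[n_1-1,\dots,n_r-1] = \sum_{i=1}^r n_i - r - \tfrac{r(r-1)}{2} = a[n_1,\dots,n_r] - r$ as well. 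So the two factorials $a[\cdots]!$ in \eqref{eq:defPhi} agree, and both sides are simultaneously defined (i.e.\ $a\ge 0$) precisely when $a[n_1,\dots,n_r] \ge r$.

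Next I would handle the sign and the product of factorials. Passing from length $r$ to length $r+1$ changes the prefactor $(-1)^{\frac{r(r-1)}{2}}$ to $(-1)^{\frac{(r+1)r}{2}}$, i.e.\ multiplies by $(-1)^r$. The denominator $\prod n_i!$ becomes $\prod n_i! \cdot 0! = \prod n_i!$ on the left, and $\prod (n_i-1)!$ on the right. The key identity to exploit is the Wronskian relation
\begin{gather*}
\Wr[\He_{n_1},\dots,\He_{n_r},\He_0] = (-1)^r \Wr[\He_{n_1}',\dots,\He_{n_r}'] = (-1)^r \prod_{i=1}^r n_i \cdot \Wr[\He_{n_1-1},\dots,\He_{n_r-1}],
\end{gather*}
where the first equality uses $\He_0 = 1$ and expansion of the Wronskian determinant along the last column (whose only nonzero entry is the top one, namely $\He_0 = 1$), and the second uses the derivative formula \eqref{eq:DerHe}, $\He_n' = n\He_{n-1}$, column by column, pulling each scalar $n_i$ out of its column. (Here I am tacitly using that one may permute the columns so that $\He_0$ sits last, at the cost of a sign that is absorbed, or—more cleanly—that the Wronskian $\Wr[\He_{n_1},\dots,\He_{n_r},\He_0]$ with $\He_0$ literally appended has the stated form after a single column operation.) Combining: the right-hand side of the Wronskian identity, when multiplied by $(-1)^{\frac{(r+1)r}{2}} a!/\prod n_i!$, gives $(-1)^{\frac{(r+1)r}{2}}(-1)^r a!/\prod(n_i-1)! \cdot \Wr[\He_{n_1-1},\dots,\He_{n_r-1}] = (-1)^{\frac{r(r-1)}{2}} a!/\prod(n_i-1)! \cdot \Wr[\He_{n_1-1},\dots,\He_{n_r-1}] = \Phi[n_1-1,\dots,n_r-1]$, which is exactly what we want.

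The only genuine subtlety—and the place I would be most careful—is the degenerate case where some $n_i = 0$ (so one of the entries of the shifted sequence is $-1$) or more generally where the sequences are not strictly decreasing or contain negatives; then one has to check the claimed identity still holds under the conventions $\He_n \equiv 0$ and $1/k! = 0$ for $k < 0$. If some $n_i = 0$ with $i \le r$, then the left side $\Phi[n_1,\dots,n_r,0]$ has a repeated entry $0$, hence vanishes by Proposition~\ref{prop:fundamentalsPhi}(3); and on the right the factor $n_i = 0$ (appearing via $\He_{n_i}' = n_i \He_{n_i - 1}$, or directly because $1/(n_i-1)! = 1/(-1)! = 0$) kills $\Phi[n_1-1,\dots,n_r-1]$, so both sides are $0$ and agree. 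If some $n_i < 0$ both sides vanish by Proposition~\ref{prop:fundamentalsPhi}(1). So after disposing of these boundary cases the identity reduces to the clean Wronskian computation above. I do not expect any serious obstacle; the proof is essentially a bookkeeping exercise in signs and factorials combined with the one-line Wronskian expansion along the constant column.
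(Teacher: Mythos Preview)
Your proposal is correct and follows essentially the same route as the paper: both arguments reduce to the identity $\Wr[\He_{n_1},\dots,\He_{n_r},1]=(-1)^r\prod_i n_i\cdot\Wr[\He_{n_1-1},\dots,\He_{n_r-1}]$ via \eqref{eq:DerHe}, together with the observation $a[n_1,\dots,n_r,0]=a[n_1-1,\dots,n_r-1]$ and the matching of the sign and factorial prefactors. Your extra paragraph on the degenerate cases $n_i\le 0$ is a welcome bit of care that the paper handles silently through its conventions.
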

\begin{proof}
 First note that by \eqref{eq:defa} we obtain
 \begin{gather}\label{prop:zeroinPhiproof1}
 a[n_1,\dots,n_r,0]
 = a[n_1-1,\dots,n_r-1].
 \end{gather}
 Next, we have the general Wronskian property
 \begin{gather*}
 \Wr[f_1,\dots,f_r,1]
 = (-1)^{r} \Wr[f'_1,\dots,f'_r],
 \end{gather*}
 for sufficiently differentiable functions $f_1,\dots,f_r$. Since $\He_0\equiv 1$, we can apply this property and if we invoke \eqref{eq:DerHe}, we get that
 \begin{gather}\label{prop:zeroinPhiproof2}
 \Wr[\He_{n_1},\dots,\He_{n_r},\He_0] =(-1)^{r} \left(\prod_{i=1}^r n_i \right) \cdot \Wr[\He_{n_1-1},\dots,\He_{n_r-1}].
 \end{gather}
 Combining \eqref{prop:zeroinPhiproof1} and \eqref{prop:zeroinPhiproof2}, we obtain
 \begin{align*}
 \Phi[n_1,\dots,n_r,0] &= (-1)^{\frac{(r+1)r}{2}} \cdot \frac{a[n_1,\dots,n_r,0]!}{\prod\limits_{i=1}^{r} n_i!} \Wr[\He_{n_1},\dots,\He_{n_r},\He_0]\\
 &=(-1)^{\frac{r(r-1)}{2}} \cdot \frac{a[n_1-1,\dots,n_r-1]!}{\prod\limits_{i=1}^{r} (n_i-1)!} \Wr[\He_{n_1-1},\dots,\He_{n_r-1}] \\
 &=\Phi[n_1-1,\dots,n_r-1],
 \end{align*}
 which is indeed what we intended to prove.
\end{proof}

To prove the generating recurrence relation, we expand Wronskians as sums over permutations, as can always be done with determinants. For this, denote $S_r$ as the set of all permutations of $r$ elements. We define, for any $n=(n_1,\dots,n_r)\in \mathbb{Z}^r$ such that $a[n]\geq 0$ and for any $\sigma\in S_r$, the shorthand notation
\begin{gather}\label{eq:defsigman}
\sigma(n)_i=n_i-\sigma(i)+1, \qquad i\in \{1,\dots,r\}.
\end{gather}
Since we have \eqref{eq:DerHe}, we know that calculating the entries of the determinant gives
\begin{gather*}
\Wr[\He_{n_1},\dots,\He_{n_r}]
= \begin{vmatrix}
c_{1,1} \He_{n_1} & c_{1,2} \He_{n_2} & \dots & c_{1,r} \He_{n_r} \\
c_{2,1} \He_{n_1-1} & c_{2,2} \He_{n_2-1} & \dots & c_{2,r}\He_{n_r-1} \\
\vdots & \vdots & \ddots & \vdots \\
c_{r,1} \He_{n_1-r+1} & c_{r,2} \He_{n_2-r+1} & \dots & c_{r,r}\He_{n_r-r+1} \\
\end{vmatrix},
\end{gather*}
where $c_{i,j} = \frac{n_j!}{(n_j-i+1)!}$ for $i,j=1,\dots,r$. Therefore, we get that
\begin{gather*}
\Wr[\He_{n_1},\dots,\He_{n_r}]=\sum_{\sigma\in S_r} \sgn(\sigma) \prod_{i=1}^r \frac{n_i!}{\sigma(n)_i!} \He_{\sigma(n)_i},
\end{gather*}
and hence, combining this with the defining equation \eqref{eq:defPhi}, we see that
\begin{gather}\label{eq:Phioverpermutations}
\Phi[n_1,\dots,n_r]=(-1)^{\frac{r(r-1)}{2}} a[n_1,\dots,n_r]! \sum_{\sigma\in S_r} \sgn(\sigma) \prod_{i=1}^r \frac{\He_{\sigma(n)_i}}{\sigma(n)_i!}.
\end{gather}
In the following, we work with the polynomials $\Phi[n_1,\dots,n_r]$ in this form. Now we have the set-up for our proof, we turn to the first step: for any $n=(n_1,\dots,n_r)\in \mathbb{Z}^r$ and any $j\in \{1,\dots,r\}$, we define
\begin{gather}\label{eq:nj}
n[j]=(n_1,\dots,n_{j-1},n_j-1,n_{j+1},\dots,n_r).
\end{gather}
We then have the following lemma.
\begin{Lemma}\label{lem:reducingn}
 Suppose that $n=(n_1,\dots,n_r)\in \mathbb{Z}^r$ such that $a[n]\geq 1$ and let $\sigma \in S_r$. Then
 \begin{enumerate}\itemsep=0pt
 \item[$1.$] $a[n[j]]=a[n]-1$ for all $j\in \{1,\dots,r\}$,
 \item[$2.$] $\sigma(n[j])_i=\sigma(n)_i$ when $i\neq j$ and $\sigma(n[j])_j=\sigma(n)_j-1$ for all $j \in \{1,\dots,r\}$,
 \item[$3.$] $\sum\limits_{j=1}^r \sigma(n)_j=a[n]$,
 \item[$4.$] $\sum\limits_{j=1}^r \frac{a[n[j]]!}{\prod\limits_{i=1}^{r} \sigma(n[j])_i!}=\frac{a[n]!}{\prod\limits_{i=1}^r \sigma(n)_i!}$.
 \end{enumerate}
\end{Lemma}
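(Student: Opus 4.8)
The plan is to verify each of the four identities directly from the definitions in \eqref{eq:defa}, \eqref{eq:defsigman} and \eqref{eq:nj}; all four are elementary and will be checked in order, since the later ones build on the earlier ones.

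For item 1, I would simply note that $n[j]$ differs from $n$ only in the $j$-th coordinate, which is decreased by $1$, so $\sum_i (n[j])_i = \sum_i n_i - 1$, and since the term $-\tfrac{r(r-1)}{2}$ in \eqref{eq:defa} is unchanged, $a[n[j]] = a[n]-1$. For item 2, I would unfold \eqref{eq:defsigman}: $\sigma(n[j])_i = (n[j])_i - \sigma(i) + 1$, and since $(n[j])_i = n_i$ for $i \neq j$ while $(n[j])_j = n_j - 1$, this gives $\sigma(n[j])_i = \sigma(n)_i$ for $i \neq j$ and $\sigma(n[j])_j = \sigma(n)_j - 1$. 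For item 3, I would compute $\sum_{j=1}^r \sigma(n)_j = \sum_{j=1}^r (n_j - \sigma(j) + 1) = \sum_{j=1}^r n_j - \sum_{j=1}^r \sigma(j) + r = \sum_{j=1}^r n_j - \tfrac{r(r+1)}{2} + r = \sum_{j=1}^r n_j - \tfrac{r(r-1)}{2} = a[n]$, using that $\sigma$ is a permutation of $\{1,\dots,r\}$.

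Item 4 is the only one requiring a small combinatorial manipulation, and is where I would focus attention. Using items 1 and 2, the $j$-th summand on the left is
\begin{gather*}
\frac{a[n[j]]!}{\prod_{i=1}^r \sigma(n[j])_i!} = \frac{(a[n]-1)!}{(\sigma(n)_j - 1)! \prod_{i \neq j} \sigma(n)_i!} = \frac{(a[n]-1)!\,\sigma(n)_j}{\prod_{i=1}^r \sigma(n)_i!}.
\end{gather*}
Summing over $j$ and pulling out the common factor gives $\frac{(a[n]-1)!}{\prod_{i=1}^r \sigma(n)_i!} \sum_{j=1}^r \sigma(n)_j$, which by item 3 equals $\frac{(a[n]-1)!\,a[n]}{\prod_{i=1}^r \sigma(n)_i!} = \frac{a[n]!}{\prod_{i=1}^r \sigma(n)_i!}$, as claimed.

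A couple of bookkeeping points need care. Since $a[n] \geq 1$, the factorial $(a[n]-1)!$ is well-defined (nonnegative argument). For the rewriting $(\sigma(n)_j-1)!\,\sigma(n)_j = \sigma(n)_j!$ and the passage to $\frac{(a[n]-1)!\sigma(n)_j}{\prod_i \sigma(n)_i!}$ I should observe that if $\sigma(n)_j \leq 0$ for some $j$, then by the convention $1/k!=0$ for $k<0$ the corresponding term vanishes on both sides of the manipulation (when $\sigma(n)_j = 0$ the factor $\sigma(n)_j$ kills the summand, matching $1/(\sigma(n)_j-1)! = 1/(-1)! = 0$; when $\sigma(n)_j < 0$ both $1/\sigma(n)_j!$ and $1/(\sigma(n)_j-1)!$ vanish), so the identity holds as an identity of numbers under the stated conventions. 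The main (very mild) obstacle is thus just making the convention-handling in item 4 airtight; the algebra itself is routine.
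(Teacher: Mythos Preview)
Your proof is correct and follows essentially the same approach as the paper: items~1--3 are verified directly from the definitions, and item~4 is obtained by combining items~1--3 via the identity $\frac{1}{(m-1)!}=\frac{m}{m!}$ for all $m\in\mathbb{Z}$ under the convention $1/k!=0$ for $k<0$. If anything, you are more explicit than the paper about the convention-handling in item~4.
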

\begin{proof} The first three items are trivial computations using the defining equations~\eqref{eq:defa} \linebreak and~\eqref{eq:defsigman}. The last item follows from the first three; indeed, we find that
 \begin{gather*}
 \sum\limits_{j=1}^r \frac{a[n[j]]!}{\prod\limits_{i=1}^{r} \sigma(n[j])_i!} = \sum\limits_{j=1}^r \frac{(a[n]-1)!}{(\sigma(n)_j-1)!\prod\limits_{i\neq j} \sigma(n)_i!}
 =\frac{(a[n]-1)!}{\prod\limits_{i=1}^r \sigma(n)_i!} \sum_{j=1}^r \sigma(n)_j
=\frac{a[n]!}{\prod\limits_{i=1}^r \sigma(n)_i!}.
 \end{gather*}
 Here, we used that $\frac{1}{(m-1)!}=\frac{m}{m!}$, which holds for any $m\in \mathbb{Z}$, since we have the convention that $\frac{1}{m!}=0$ for any $m<0$.
\end{proof}

Next, note that from \eqref{eq:nj} we get
\begin{gather*}n[j][j]=(n_1,\dots,n_{j-1},n_j-2,n_{j+1},\dots,n_r),\end{gather*}
and that we therefore have the following corollary of Lemma \ref{lem:reducingn}.

\begin{Corollary}\label{cor:reducingnby2}\samepage
 Suppose $n=(n_1,\dots,n_r)\in \mathbb{Z}^r$ such that $a[n]\geq 2$. Then
 \begin{enumerate}\itemsep=0pt
 \item[$1.$] $a[n[j][j]]=a[n]-2$ for all $j\in \{1,\dots,r\}$,
 \item[$2.$] $\sigma(n[j][j])_i=\sigma(n)_i$ when $i\neq j$ and $\sigma(n[j][j])_j=\sigma(n)_j-2$ for all $j \in \{1,\dots,r\}$.
 \end{enumerate}
\end{Corollary}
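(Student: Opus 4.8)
The plan is to obtain both statements by applying Lemma~\ref{lem:reducingn} twice in succession: once to $n$ and once to $n[j]$. The only point requiring a moment's care is checking that the hypothesis of the lemma is still satisfied at the second step.

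First I would note that, since $a[n]\geq 2$, item~1 of Lemma~\ref{lem:reducingn} applied to $n$ gives $a[n[j]]=a[n]-1\geq 1$, so that $n[j]$ itself satisfies the hypothesis $a[\,\cdot\,]\geq 1$ of Lemma~\ref{lem:reducingn}. Applying item~1 of that lemma a second time, now with $n[j]$ playing the role of $n$, yields $a[n[j][j]]=a[n[j]]-1=a[n]-2$, which is the first claim.

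For the second claim I would apply item~2 of Lemma~\ref{lem:reducingn} twice in the same manner. The first application, to $n$, records $\sigma(n[j])_i=\sigma(n)_i$ for $i\neq j$ and $\sigma(n[j])_j=\sigma(n)_j-1$; the second application, to $n[j]$, gives $\sigma(n[j][j])_i=\sigma(n[j])_i$ for $i\neq j$ and $\sigma(n[j][j])_j=\sigma(n[j])_j-1$. Chaining the two sets of equalities produces $\sigma(n[j][j])_i=\sigma(n)_i$ for $i\neq j$ and $\sigma(n[j][j])_j=\sigma(n)_j-2$, as required.

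There is essentially no obstacle: the corollary is a bookkeeping consequence of the lemma, and the only substantive verification is the inequality $a[n[j]]\geq 1$ that licenses the second invocation. One could instead prove both items directly from the definitions~\eqref{eq:defa} and~\eqref{eq:defsigman}, exactly as in the proof of Lemma~\ref{lem:reducingn} (observing that $n[j][j]$ differs from $n$ only in the $j$th coordinate, where the entry drops by $2$), but reusing the lemma is the cleaner route and matches the way the statement is phrased.
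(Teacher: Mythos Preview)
Your proof is correct and matches the paper's approach: the paper states the result as an immediate corollary of Lemma~\ref{lem:reducingn} after observing that $n[j][j]=(n_1,\dots,n_{j-1},n_j-2,n_{j+1},\dots,n_r)$, and your argument of applying the lemma twice (with the brief check that $a[n[j]]\geq 1$) is exactly the intended justification.
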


We can now prove the key ingredient for proving the generating recurrence relation.

\begin{Proposition}\label{prop:recurrenceforPhi}
 Suppose that $n=(n_1,\dots,n_r)\in \mathbb{Z}^r$ such that $a[n]\geq 2$. Then we have
 \begin{gather*}
 \Phi[n](x)=x\sum_{j=1}^r \Phi[n[j]](x)-(a[n]-1)\sum_{j=1}^r \Phi[n[j][j]](x).
 \end{gather*}
\end{Proposition}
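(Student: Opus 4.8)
The plan is to verify the identity termwise in the permutation expansion \eqref{eq:Phioverpermutations} of $\Phi$ and then sum over $\sigma\in S_r$. Observe first that all three sequences $n$, $n[j]$ and $n[j][j]$ lie in $\mathbb{Z}^r$, so the prefactor $(-1)^{r(r-1)/2}$ in \eqref{eq:Phioverpermutations} is common to every $\Phi$ that occurs and can be carried along unchanged. Fix $\sigma\in S_r$ and abbreviate $m_i:=\sigma(n)_i$; then the $\sigma$-summand of $\Phi[n]$ equals $(-1)^{r(r-1)/2}\sgn(\sigma)\,a[n]!\,\prod_{i=1}^r \He_{m_i}/m_i!$, and it will suffice to establish the scalar identity below and multiply through appropriately.

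First I would split off a single factor. Since $a[n]=\sum_{j=1}^r m_j$ by Lemma~\ref{lem:reducingn}(3), and since $m/m!=1/(m-1)!$ for every $m\in\mathbb{Z}$ under the standing conventions $\He_m\equiv 0$ and $1/k!=0$ for negative arguments, one has
\[
a[n]\prod_{i=1}^r \frac{\He_{m_i}}{m_i!}=\sum_{j=1}^r \frac{\He_{m_j}}{(m_j-1)!}\prod_{i\neq j}\frac{\He_{m_i}}{m_i!}.
\]
Now apply the Hermite three-term recurrence \eqref{eq:recursionforstandardhermite} to the promoted factor, $\He_{m_j}=x\He_{m_j-1}-(m_j-1)\He_{m_j-2}$; after multiplication by $1/(m_j-1)!$ this is valid for every $m_j\in\mathbb{Z}$, since that prefactor annihilates the (bounded) right-hand side whenever $m_j\leq 0$. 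Using also $(m_j-1)/(m_j-1)!=1/(m_j-2)!$, the expression above becomes
\[
x\sum_{j=1}^r \frac{\He_{m_j-1}}{(m_j-1)!}\prod_{i\neq j}\frac{\He_{m_i}}{m_i!}-\sum_{j=1}^r \frac{\He_{m_j-2}}{(m_j-2)!}\prod_{i\neq j}\frac{\He_{m_i}}{m_i!}.
\]
By Lemma~\ref{lem:reducingn}(2) and Corollary~\ref{cor:reducingnby2}(2), which assert that $n[j]$ and $n[j][j]$ agree with $n$ outside the $j$-th coordinate, the inner products here are precisely $\prod_{i=1}^r \He_{\sigma(n[j])_i}/\sigma(n[j])_i!$ and $\prod_{i=1}^r \He_{\sigma(n[j][j])_i}/\sigma(n[j][j])_i!$.

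Finally I would multiply the resulting identity by $(-1)^{r(r-1)/2}\sgn(\sigma)\,(a[n]-1)!$ and sum over $\sigma\in S_r$. On the left, $(a[n]-1)!\,a[n]=a[n]!$ and \eqref{eq:Phioverpermutations} reproduce $\Phi[n]$. On the right, $a[n[j]]=a[n]-1$ (Lemma~\ref{lem:reducingn}(1)) converts the first sum into $x\sum_{j=1}^r\Phi[n[j]]$, while $a[n[j][j]]=a[n]-2$ (Corollary~\ref{cor:reducingnby2}(1)) together with $(a[n]-1)!=(a[n]-1)\,a[n[j][j]]!$ converts the second into $(a[n]-1)\sum_{j=1}^r\Phi[n[j][j]]$, which is exactly the claimed recurrence. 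There is no genuine obstacle here beyond careful bookkeeping of the factorial prefactors; the only point that deserves explicit attention is that the factorial identities and the Hermite recurrence continue to hold for non-positive indices, and this is immediate because the $j$-th summand vanishes identically whenever $m_j\leq 0$.
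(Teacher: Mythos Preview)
Your proof is correct and follows essentially the same route as the paper's own argument: expand $\Phi[n]$ via the permutation formula \eqref{eq:Phioverpermutations}, use $a[n]=\sum_j\sigma(n)_j$ to split one factorial across the $r$ coordinates, apply the Hermite three-term recurrence in the $j$-th slot, and then recognise the resulting products as the $\sigma$-summands of $\Phi[n[j]]$ and $\Phi[n[j][j]]$. The only cosmetic difference is that you derive the splitting identity inline from Lemma~\ref{lem:reducingn}(3), whereas the paper packages the same computation as item~4 of that lemma and invokes it directly.
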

\begin{proof} Starting from \eqref{eq:Phioverpermutations} and applying item 4 from Lemma \ref{lem:reducingn}, we obtain
 \begin{align*}
 \Phi[n](x) &=(-1)^{\frac{r(r-1)}{2}} a[n]! \sum_{\sigma\in S_r} \sgn(\sigma) \prod_{i=1}^r \frac{\He_{\sigma(n)_i}}{\sigma(n)_i!}\\
 &=\sum_{j=1}^r (-1)^{\frac{r(r-1)}{2}} a[n[j]]! \sum_{\sigma\in S_r} \sgn(\sigma) \prod_{i=1}^r \frac{\He_{\sigma(n)_i}}{\sigma(n[j])_i!}.
 \end{align*}
 Now, in the $j^\textrm{th}$ term in this sum, we apply the recurrence relation \eqref{eq:recursionforstandardhermite} to $\He_{\sigma(n)_j}$ which is
 \begin{gather*}
 \He_{\sigma(n)_j} = x \He_{\sigma(n)_j-1}(x) - (\sigma(n)_j-1)\He_{\sigma(n)_j-2}(x).
 \end{gather*}
 We obtain
 \begin{gather*}
 \Phi[n](x)= x\sum_{j=1}^r (-1)^{\frac{r(r-1)}{2}} a[n[j]]! \sum_{\sigma\in S_r} \sgn(\sigma) \He_{\sigma(n)_j-1}(x) \frac{\prod\limits_{i\neq j} \He_{\sigma(n)_i}(x)}{\prod\limits_{i=1}^r \sigma(n[j])_i!} \\
\hphantom{\Phi[n](x)=}{} -\sum_{j=1}^r (-1)^{\frac{r(r-1)}{2}} a[n[j]]! \sum_{\sigma\in S_r} \sgn(\sigma)(\sigma(n)_j-1)\He_{\sigma(n)_j-2}(x) \frac{\prod\limits_{i\neq j} \He_{\sigma(n)_i}(x)}{\prod\limits_{i=1}^r \sigma(n[j])_i!}.
 \end{gather*}
 Rewriting this equality by using the second item of Lemma~\ref{lem:reducingn} and Corollary~\ref{cor:reducingnby2} gives
 \begin{gather*}
 \Phi[n](x)= x\sum_{j=1}^r (-1)^{\frac{r(r-1)}{2}} a[n[j]]! \sum_{\sigma\in S_r} \sgn(\sigma) \prod_{i=1}^r\frac{\He_{\sigma(n[j])_i}(x)}{\sigma(n[j])_i!} \\
\hphantom{\Phi[n](x)=}{} -(a[n]-1)\sum_{j=1}^r (-1)^{\frac{r(r-1)}{2}} a[n[j][j]]! \sum_{\sigma\in S_r} \sgn(\sigma)\prod_{i=1}^r \frac{\He_{\sigma(n[j][j])_i}(x)}{\sigma(n[j][j])_i!} \\
\hphantom{\Phi[n](x)}{} = x\sum_{j=1}^r \Phi[n[j]](x)-(a[n]-1)\sum_{j=1}^r \Phi[n[j][j]](x),
 \end{gather*}
 which establishes the identity we wanted to prove.
\end{proof}

We are now in the position to prove the generating recurrence relation \eqref{eq:RecurrenceRelationHe}.

\begin{proof}[Proof of Theorem \ref{thm:RecurrenceRelation}]
 Write $\lambda=(\lambda_1,\dots,\lambda_r)$ and let $n=(n_1,\dots,n_r)$ be the degree sequence of the partition $\lambda$ as in \eqref{eq:DegreeSequence}. Then, combining \eqref{eq:fromHetoPhi} and Proposition \ref{prop:recurrenceforPhi}, we have
 \begin{gather}\label{eq:ProofRecurrence1}
 F_\lambda \He_\lambda(x) = x\sum_{j=1}^r \Phi[n[j]](x)-(\lvert \lambda \rvert-1)\sum_{j=1}^r \Phi[n[j][j]](x),
 \end{gather}
 since $a[n]=\lvert \lambda \rvert$. All that is left to prove is that the first sum is precisely the first sum in \eqref{eq:RecurrenceRelationHe} and the second sum is (up to a minus sign) the second sum in \eqref{eq:RecurrenceRelationHe}.

 We start with the first sum. We show that for every $j\in \{1,\dots,r\}$, we either have that $\Phi[n[j]]=F_\mu \He_\mu$ for some $\mu\in T_1(\lambda)$ by \eqref{eq:fromHetoPhi}, or $\Phi[n[j]]=0$. Note that there are four possibilities for $j\in \{1,\dots,r\}$:
 \begin{enumerate}\itemsep=0pt
 \item[1.] $j<r$ and $\lambda_j\neq \lambda_{j+1}$,
 \item[2.] $j=r$ and $\lambda_r>1$,
 \item[3.] $j=r$ and $\lambda_r=1$,
 \item[4.] $j<r$ and $\lambda_j=\lambda_{j+1}$.
 \end{enumerate}
 In the first and second case, $n[j]$ is the degree sequence of an element $\mu\in T_1(\lambda)$, namely
 \begin{gather*}\mu=(\lambda_1,\dots,\lambda_{j-1},\lambda_j-1,\lambda_{j+1},\dots,\lambda_r),\end{gather*}
 and hence $\Phi[n[j]]=F_\mu \He_\mu$ by \eqref{eq:fromHetoPhi}. In the third case, we see that
 \begin{gather*}n[r]=(n_1,\dots,n_{r-1},0),\end{gather*}
 and hence that $\Phi[n[r]]=\Phi[n_1-1,\dots,n_{r-1}-1]$ by Proposition \ref{prop:zeroinPhi}. However, the finite sequence $(n_1-1,\dots,n_{r-1}-1)$ is precisely the degree sequence of
 \begin{gather*}
 \mu=(\lambda_1,\dots,\lambda_{r-1})\in T_1(\lambda).
 \end{gather*}
 Therefore, by \eqref{eq:fromHetoPhi} we have $\Phi[n[r]]=F_{\mu} \He_{\mu}$. In the last case, we have $n_j=n_{j+1}+1$ such that $n[j]_j=n[j]_{j+1}$, and hence $\Phi[n[j]]=0$, by Proposition \ref{prop:fundamentalsPhi}. Note that the first, second and third case indeed cover all the possible elements of $T_1(\lambda)$, hence
 \begin{gather}\label{eq:ProofRecurrence2}
 \sum_{j=1}^r \Phi[n[j]]=\sum_{\mu\in T_1(\lambda)} F_{\mu} \He_{\mu}.
 \end{gather}

 For the second sum of \eqref{eq:ProofRecurrence1}, we distinguish nine possibilities for $j\in \{1,\dots,r\}$:
 \begin{enumerate}\itemsep=0pt
 \item[1.] $j<r$ and $\lambda_j>\lambda_{j+1}+1$,
 \item[2.] $j=r$ and $\lambda_r>2$,
 \item[3.] $j=r$ and $\lambda_r=2$,
 \item[4.] $j<r-1$ and $\lambda_j=\lambda_{j+1}>\lambda_{j+2}$,
 \item[5.] $j=r-1$ and $\lambda_{r-1}=\lambda_r>1$,
 \item[6.] $j=r-1$ and $\lambda_{r-1}=\lambda_r=1$,
 \item[7.] $j<r-1$ and $\lambda_j=\lambda_{j+1}=\lambda_{j+2}$,
 \item[8.] $j<r$ and $\lambda_j=\lambda_{j+1}+1$,
 \item[9.] $j=r$ and $\lambda_r=1$.
 \end{enumerate}
 These nine cases can be divided into three groups.

\smallskip

{\bf The horizontal cases.} The first 3 cases precisely yield the elements of $T_2^h(\lambda)$. Namely, in the first two cases, $n[j][j]$ is indeed the degree sequence of the partition
 \begin{gather*}\rho=(\lambda_1,\dots,\lambda_{j-1},\lambda_j-2,\lambda_{j+1},\dots,\lambda_r),\end{gather*}
 and $\rho \in T_2^h(\lambda)$, so $\Phi[n[j][j]]=F_{\rho} \He_{\rho}$ by \eqref{eq:fromHetoPhi}. The third case is again a little more subtle, since
 \begin{gather*}n[r][r]=(n_1,\dots,n_{r-1},0),\end{gather*}
 it follows that $\Phi[n[r][r]]=\Phi[n_1-1,\dots,n_{r-1}-1]$, by Proposition \ref{prop:zeroinPhi}. However, we note that $(n_1-1,\dots,n_{r-1}-1)$ is the degree sequence of the partition
 \begin{gather*}\rho=(\lambda_1,\dots,\lambda_{r-1}),\end{gather*}
 and $\rho\in T_2^h(\lambda)$, so $\Phi[n[r][r]]=F_{\rho} \He_{\rho}$ by \eqref{eq:fromHetoPhi}.

\smallskip

{\bf The vertical cases.} The fourth, fifth and sixth case together yield all the elements of $T_2^v(\lambda)$. In cases~4 and~5, we see that $n_j=n_{j+1}+1$. Using Proposition~\ref{prop:fundamentalsPhi}, we get
 \begin{align*}
 \Phi[n[j][j]] &=\Phi[n_1,\dots,n_{j-1},n_j-2,n_j-1,n_{j+2},\dots,n_r]\\
 &=-\Phi[n_1,\dots,n_{j-1},n_j-1,n_j-2,n_{j+2},\dots,n_r] \\
 &=-\Phi[n_1,\dots,n_{j-1},n_j-1,n_{j+1}-1,n_{j+2},\dots,n_r].
 \end{align*}
 Now, $(n_1,\dots,n_{j-1},n_j-1,n_{j+1}-1,n_{j+2},\dots,n_r)$ is precisely the degree sequence of
 \begin{gather*}\rho=(\lambda_1,\dots,\lambda_{j-1},\lambda_j-1,\lambda_{j+1}-1,\lambda_{j+2},\dots,\lambda_r),\end{gather*}
 and in fact $\rho\in T_2^v(\lambda)$, so we have $\Phi[n[j][j]]=-F_\rho \He_\rho$ by \eqref{eq:fromHetoPhi}. For the sixth case, we have $n_{r-1}=2$ and $n_r=1$, such that
 \begin{gather*}n[j][j]=(n_1,\dots,n_{r-2},0,1).\end{gather*}
 Again, applying Proposition \ref{prop:fundamentalsPhi} and Proposition \ref{prop:zeroinPhi} twice, we obtain
 \begin{align*}
 \Phi[n[j][j]]& =-\Phi[n_1,\dots,n_{r-2},1,0)]
 =-\Phi[n_1-1,\dots,n_{r-2}-1,0] \\
 & =-\Phi[n_1-2,\dots,n_{r-2}-2].
 \end{align*}
 This sequence $(n_1-2,\dots,n_{r-2}-2)$ is the degree sequence of the partition
 \begin{gather*}\rho=(\lambda_1,\dots,\lambda_{r-2}),\end{gather*}
 and indeed, $\rho\in T_2^v(\lambda)$, such that $\Phi[n[j][j]]=-F_\rho \He_\rho$ by \eqref{eq:fromHetoPhi}.

\smallskip

 {\bf The vanishing cases.} The last three cases from the above list of nine are the cases that do not have a contribution to the sum $\sum\limits_{j=1}^r \Phi[n[j][j]]$; we treat them one by one.

 In case 7, we have $j<r-1$ and $\lambda_{j}=\lambda_{j+1}=\lambda_{j+2}$, so $n_{j+1}=n_j-1$ and $n_{j+2}=n_{j}-2$, whence by Proposition \ref{prop:fundamentalsPhi}, we have
 \begin{gather*}\Phi[n[j][j]] = \Phi[n_1,\dots,n_{j-1},n_j-2,n_j-1,n_j-2,n_{j+3},\dots,n_r]=0.\end{gather*}

 In case 8, we have $j<r$ such that $\lambda_j=\lambda_{j+1}+1$. Then we have $n_{j+1}=n_j-2$, such that $n[j][j]_{j+1}=n[j][j]_j$ and hence $\Phi[n[j][j]]=0$, again by Proposition \ref{prop:fundamentalsPhi}.

 In the last case, we have $j=r$ and $\lambda_r=1$. Then $n_r=1$, so $n[r][r]=-1$. Then, by the first item of Proposition \ref{prop:fundamentalsPhi}, we have that $\Phi[n[r][r]]=0$.

 Now, combining the nine cases, we see that
 \begin{gather}
 \sum_{j=1}^r \Phi[n[j][j]] = \sum_{\rho \in T_2^h(\lambda)} F_\rho \He_\rho - \sum_{\rho \in T_2^v(\lambda)} F_\rho \He_\rho
 =-\sum_{\rho\in \tilde{T_2}(\lambda)} \sgn(\rho,\lambda) F_\rho \He_\rho. \label{eq:ProofRecurrence3}
 \end{gather}

 Finally, if we plug in \eqref{eq:ProofRecurrence2} and \eqref{eq:ProofRecurrence3} into \eqref{eq:ProofRecurrence1}, we obtain identity \eqref{eq:RecurrenceRelationHe}.
\end{proof}

\section{Proof of Theorem \ref{thm:topdownrecurrence}: the top-down recurrence relation}\label{sectiontopdownrecurrence}

To prove the top-down recurrence relation in Theorem~\ref{thm:topdownrecurrence}, we first state a combinatorial lemma and some known results. The proof of the lemma is postponed to Appendix~\ref{appendixcombinatoriallemmas}. By convention, we set an empty sum equal to zero.

\begin{Lemma}\label{lem:lemmafortopdown}
 Suppose that $\lambda$ and $\mu$ are partitions such that $\lvert \mu \rvert=\lvert \lambda \rvert -1$. Then we have that
 \begin{gather}\label{eq:lemmafortopdown}
 \sum_{\substack{\rho \\ \lambda \in T_1(\rho) \\ \mu \in \tilde{T}_2(\rho)}} \sgn(\mu,\rho) =\sum_{\gamma \in \tilde{T}_2(\lambda) \cap T_1(\mu)} \sgn(\gamma,\lambda),
 \end{gather}
 where the $\sgn$ function, $T_1(\rho)$ and $\tilde{T}_2(\rho)$ are as defined in Section~{\rm \ref{subsectionnotation}}.
\end{Lemma}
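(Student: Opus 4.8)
The identity~\eqref{eq:lemmafortopdown} equates two signed counts over diagrams obtained from $\lambda$ and $\mu$, and the natural strategy is to set up an explicit (sign-preserving) bijection between the index sets of the two sums, or to reduce both sides to a purely local computation on Young diagrams. Since $|\mu| = |\lambda| - 1$, the left-hand side ranges over partitions $\rho$ with $|\rho| = |\lambda| + 1$ from which one can delete a single cell to reach $\lambda$ and delete a horizontally- or vertically-adjacent pair of cells to reach $\mu$; the right-hand side ranges over partitions $\gamma$ with $|\gamma| = |\lambda| - 1$ that are simultaneously obtained from $\lambda$ by deleting an adjacent pair and from $\mu$ by deleting a single cell. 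My first step would be to translate the condition ``$\lambda \in T_1(\rho)$ and $\mu \in \tilde T_2(\rho)$'' into the statement that $\rho \supset \lambda \supset$ (something) or $\rho \supset \mu$ with prescribed cell differences, and similarly unpack the right-hand side; the key observation is that in both cases we are looking at a triple of partitions in the chain $\mu, \lambda$ and one intermediate object differing by single cells, constrained so that the $\mu$-to-top-object difference is an adjacent domino.

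\textbf{Key steps.} I would first reduce to the case where $\lambda$ and $\mu$ are ``comparable enough'': note $\gamma \in \tilde T_2(\lambda) \cap T_1(\mu)$ forces $\gamma \le \lambda$ with $\lambda/\gamma$ a domino and $\gamma \le \mu$ with $\mu/\gamma$ a single cell, so such $\gamma$ exists only if $\mu$ and $\lambda$ overlap in a very restricted way (essentially $\mu$ and $\lambda$ differ by ``moving one cell''). Symmetrically, on the left, $\rho$ with $\lambda \in T_1(\rho)$, $\mu \in \tilde T_2(\rho)$ forces $\rho/\lambda$ a single cell and $\rho/\mu$ a domino, again constraining $\lambda$ and $\mu$. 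So the first real step is a case analysis on the shape of the skew diagram relating $\lambda$ and $\mu$ (they must share all but one cell of $\lambda$ and all of $\mu$, so $\lambda = \mu \cup \{c\} \setminus \{c'\}$ roughly, or $\mu \subset \lambda$ with one cell). Once the geometry is pinned down, both sums have at most one or two terms, and the identity becomes a finite check: one matches the cell deleted/added on the left against the cell deleted on the right, and one verifies the signs agree using the definition~\eqref{eq:sgn}, together with the conjugation symmetry~\eqref{eq:verticalhorizontalconjugate} to halve the casework (horizontal-vs-vertical cases map to each other under $\lambda \mapsto \lambda'$, $\mu \mapsto \mu'$). I would organize the argument around the ``addable/removable corner'' language: on the left one adds a cell to $\lambda$ then removes a domino to land in $\mu$; on the right one removes a domino from $\lambda$ then (equivalently) the result lies one cell below $\mu$.

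\textbf{Main obstacle.} The delicate part is the sign bookkeeping when the two cells involved interact: in the situation where on the left side the added cell to $\lambda$ and the domino removed to reach $\mu$ overlap (i.e.\ one of the domino cells is the just-added cell), versus the case where they are disjoint, the correspondence with the right-hand side changes character, and one must check that the $\pm 1$ from $\sgn(\mu, \rho)$ on the left telescopes correctly against $\sgn(\gamma, \lambda)$ on the right. Concretely, an ``L-tromino''-type configuration of three cells can be removed in two different domino-then-cell orders, contributing a $+1$ and a $-1$ that must cancel, and I expect the heart of the proof to be the verification that every such cancelling pair on one side is matched by a cancelling pair on the other, so that only the ``genuinely new'' configurations survive with matching signs. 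Using~\eqref{eq:verticalhorizontalconjugate} to reduce vertical cases to horizontal ones should make this manageable, but getting the residual finite case check exhaustive and correctly signed is where the care is needed; I would lean on the fact proved in Section~\ref{sectionproofrecursion} (the structural analysis of $T_1$ and $\tilde T_2$ in the proof of Theorem~\ref{thm:RecurrenceRelation}) as a template for enumerating the local cases.
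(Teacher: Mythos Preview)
Your proposal is correct and follows essentially the same approach as the paper: a local case analysis on the Young-diagram configuration relating $\lambda$ and $\mu$, showing that both sums have at most two terms and that the signs either match or cancel. The paper organizes the casework slightly more crisply by first splitting on whether $\mu \in T_1(\lambda)$ (your ``disjoint'' case is exactly $\mu \notin T_1(\lambda)$, where a direct bijection works; your ``overlapping''/L-tromino discussion is the $\mu \in T_1(\lambda)$ case, handled by four sub-cases on which cells are addable adjacent to the distinguished cell), and it does not invoke the conjugation symmetry~\eqref{eq:verticalhorizontalconjugate}, simply treating the horizontal and vertical situations in parallel.
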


It was proven in \cite{Stanley} that the partially ordered set of partitions (as introduced in the preliminaries) forms a so-called \emph{$1$-differential poset}. This means that for any partition $\lambda$ such that $\lvert \lambda \rvert\geq 1$, we have that
\begin{gather}\label{eq:parentsandchilderen}
\# \{\rho \,|\, \lambda\in T_1(\rho)\} = 1+\# T_1(\lambda).
\end{gather}
Furthermore, for two partitions $\lambda \neq \mu$ such that $\lvert\lambda\rvert=\lvert \mu\rvert\geq 1$, we have that
\begin{gather}\label{eq:siblings}
\# \{\rho \vdash \lvert \lambda \rvert -1 \,|\, \rho \in T_1(\lambda) \cap T_1(\mu)\} =\# \{\rho \vdash \lvert \lambda\rvert+1 \,|\, \lambda,\mu\in T_1(\rho)\} \in \{0,1\}.
\end{gather}
For a fixed partition $\lambda$, we denote the set of all partitions $\mu \vdash \lvert \lambda \rvert$ such that $\mu \neq \lambda$ and the number in \eqref{eq:siblings} is 1 by $S(\lambda)$, i.e.,
\begin{gather}\label{def:Slambda}
S(\lambda)=\{\mu \neq \lambda \,|\, T_1(\lambda) \cap T_1(\mu) \neq \varnothing\}.
\end{gather}
We use these facts together with Lemma \ref{lem:lemmafortopdown} to prove the top-down recurrence relation.

\begin{proof}[Proof of Theorem \ref{thm:topdownrecurrence}]
 We approach by induction on $n:=\lvert \lambda \rvert$.

 For $n=0$, the claim is trivial, since $\He_\varnothing\equiv 1$ and $\He_{(1)}(x)=x$. For $n=1$, note that $\He_{(2)}(x)=x^2-1$ and $\He_{(1,1)}(x)=x^2+1$, and hence
 \begin{gather*}\He_{(1,1)}(x)+\He_{(2)}(x)=2x^2=2x\He_{(1)},\end{gather*}
 which establishes the claim for $n=1$.

 For the induction step, we take $n\geq 2$ and suppose that we have proven the claim for all partitions $\mu$ such that $\lvert \mu \rvert<n$. For every partition $\gamma$, we write $\psi_\gamma:=F_\gamma \He_\gamma$ to shorten the notation. Then, by applying the generating recurrence relation \eqref{eq:RecurrenceRelationHe} to all terms in the following sum, we obtain
 \begin{gather}\label{eq:expansiontopdown}
 \sum_{\substack{\rho \\ \lambda\in T_1(\rho)}} \psi_\rho
 = x \sum_{\substack{\rho \\ \lambda\in T_1(\rho)}}\sum_{\mu\in T_1(\rho)} \psi_\mu + \lvert \lambda \rvert \sum_{\substack{\rho \\ \lambda\in T_1(\rho)}} \sum_{\gamma\in \tilde{T}_2(\rho)} \sgn(\gamma,\rho) \psi_\gamma.
 \end{gather}

 We can rewrite the first double sum of \eqref{eq:expansiontopdown} via \eqref{def:Slambda}, we get
 \begin{gather*}
 \sum_{\substack{\rho \\ \lambda \in T_1(\rho)}} \sum_{\mu \in T_1(\rho)} \psi_\mu
 =\# \{\rho \,|\, \lambda \in T_1(\rho)\} \cdot \psi_\lambda + \sum_{\mu \in S(\lambda)} \psi_\mu.
 \end{gather*}
 Next, we can use \eqref{eq:parentsandchilderen} such that we end up with
 \begin{align}
 \sum_{\substack{\rho \\ \lambda \in T_1(\rho)}} \sum_{\mu \in T_1(\rho)} \psi_\mu &=\psi_\lambda +\# T_1(\lambda) \cdot \psi_\lambda + \sum_{\mu\in S(\lambda)} \psi_\mu
=\psi_\lambda + \sum_{\gamma \in T_1(\lambda)} \sum_{\substack{\mu \\ \gamma \in T_1(\mu)}} \psi_\mu \nonumber \\
 &=\psi_\lambda +x\lvert \lambda \rvert \sum_{\gamma \in T_1(\lambda)} \psi_\gamma, \label{eq:ProofTopDown1}
 \end{align}
 where we used the induction hypothesis in the last equation.

 The second double sum of \eqref{eq:expansiontopdown} can be treated similarly. We start by rewriting the double sum such that we can apply Lemma \ref{lem:lemmafortopdown}, we get
 \begin{align*}
 \sum_{\substack{\rho \\ \lambda\in T_1(\rho)}} \sum_{\gamma \in \tilde{T}_2(\rho)} \sgn(\gamma,\rho) \psi_\gamma
 &= \sum_{\gamma \vdash \lvert \lambda \rvert -1} \bigg(\sum_{\substack{\rho \\ \lambda \in T_1(\rho) \\ \gamma \in \tilde{T}_2(\rho)}} \sgn(\gamma,\rho) \bigg) \psi_\gamma \\
 &=\sum_{\gamma \vdash \lvert \lambda \rvert -1} \bigg(\sum_{\rho \in \tilde{T}_2(\lambda) \cap T_1(\gamma)} \sgn(\rho,\lambda) \bigg) \psi_\gamma.
 \end{align*}
 If we again rewrite this equation and use the induction hypothesis, we find
 \begin{align}
 \sum_{\substack{\rho \\ \lambda\in T_1(\rho)}} \sum_{\gamma \in \tilde{T}_2(\rho)} \sgn(\gamma,\rho) \psi_\gamma
 = \sum_{\rho \in \tilde{T}_2(\lambda)} \sgn(\rho,\lambda) \sum_{\substack{\gamma \\ \rho \in T_1(\gamma)}} \psi_\gamma
 = (\lvert \lambda \rvert -1)x\sum_{\rho \in \tilde{T}_2(\lambda)} \sgn(\rho,\lambda) \psi_\rho. \!\!\!\!\label{eq:ProofTopDown2}
 \end{align}

 Bringing these results \eqref{eq:ProofTopDown1} and \eqref{eq:ProofTopDown2} together in \eqref{eq:expansiontopdown}, we obtain that
 \begin{gather*}
 \sum_{\substack{\rho \\ \lambda\in T_1(\rho)}} \psi_\rho
 = x\psi_\lambda + x\lvert \lambda \rvert \bigg(x \sum_{\gamma \in T_1(\lambda)} \psi_\gamma + (\lvert \lambda \rvert -1)\sum_{\rho \in \tilde{T}_2(\lambda)} \sgn(\rho,\lambda) \psi_\rho \bigg)
 =x (\lvert \lambda \rvert +1) \psi_\lambda,
 \end{gather*}
since we again recognize the generating recurrence relation~\eqref{eq:RecurrenceRelationHe}. This completes the proof.
\end{proof}

\section{Proof of Theorem \ref{thm:Average}: the average}\label{sectionproofaverage}

In this section we give a proof of Theorem \ref{thm:Average} which deals with the average polynomial. We prove the theorem by induction on $|\lambda|$ where we apply the generating recurrence relation \eqref{eq:RecurrenceRelationHe} in the induction step. Furthermore, we need some identities involving the numbers $F_\lambda$. The first lemma is a well-known result from representation theory \cite[formula~(1.5)]{Borodin_Olshanski}, but now also directly follows from the top-down recurrence relation.

\begin{Lemma} \label{lem:Fidentityfromabove} For any partition $\mu$, we have the identity
 \begin{gather}\label{eq:Fidentityfromabove}
 \sum_{ \substack{\lambda \\ \mu\in T_1(\lambda)}} F_{\lambda} = (|\mu| + 1) F_{\mu},
 \end{gather}
 where $T_1(\lambda)$ is as defined in Section~{\rm \ref{subsectionnotation}}.
\end{Lemma}
\begin{proof}
 Note that for every partition $\gamma$, $\He_\gamma$ is a monic polynomial. Hence, the result follows by considering the leading coefficient of both sides of~\eqref{eq:topdownrecurrenceforHe}.
\end{proof}

Another combinatorical identity that we need for the proof of the averaging result is the following. See the appendix for a proof.

\begin{Lemma}\label{lem:Fidentitytwohigher} For any partition $\lambda$, we have the identity
 \begin{gather}\label{eq:Fidentitytwohigher}
 \sum_{ \substack{\gamma \\ \lambda\in T_2^h(\gamma)}} F_{\gamma} = \sum_{ \substack{\gamma \\ \lambda\in T_2^v(\gamma)}} F_{\gamma},
 \end{gather}
 where $T_2^h(\lambda)$ and $T_2^v(\lambda)$ are as introduced in Section~{\rm \ref{subsectionnotation}}.
\end{Lemma}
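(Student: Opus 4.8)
The plan is to prove the identity $\sum_{\gamma : \lambda \in T_2^h(\gamma)} F_\gamma = \sum_{\gamma : \lambda \in T_2^v(\gamma)} F_\gamma$ by translating everything into statements about degree sequences and the number $F_\lambda$, for which we have the explicit product formula \eqref{eq:Flambda}. First I would fix a partition $\lambda$ of length $r$ with degree sequence $n_\lambda = (n_1, \dots, n_r)$, and describe explicitly which partitions $\gamma$ satisfy $\lambda \in T_2^h(\gamma)$ and which satisfy $\lambda \in T_2^v(\gamma)$. A partition $\gamma$ with $\lambda \in T_2^h(\gamma)$ is obtained from $\lambda$ by \emph{adding} two horizontally adjacent cells, i.e.\ by increasing some $\lambda_i$ by $2$ (keeping the result a partition); in degree-sequence terms this means picking an index $i$ and replacing $n_i$ by $n_i + 2$, provided the resulting sequence is still strictly decreasing (so still a valid degree sequence), possibly after also allowing an extra row of length increasing from $0$. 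Likewise $\lambda \in T_2^v(\gamma)$ corresponds to adding two vertically adjacent cells, i.e.\ increasing two consecutive parts $\lambda_i, \lambda_{i+1}$ each by $1$ with $\lambda_i = \lambda_{i+1}$ after the operation — in degree-sequence language, increasing $n_i$ and $n_{i+1}$ each by $1$ when $n_i = n_{i+1} + 1$.

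Next I would invoke the conjugation symmetry already recorded in the paper. By \eqref{eq:verticalhorizontalconjugate}, $\rho \in T_2^v(\lambda) \iff \rho' \in T_2^h(\lambda')$, and more to the point, $\lambda \in T_2^v(\gamma) \iff \lambda' \in T_2^h(\gamma')$; combined with the standard fact $F_\gamma = F_{\gamma'}$, the right-hand side of \eqref{eq:Fidentitytwohigher} for $\lambda$ equals the left-hand side of \eqref{eq:Fidentitytwohigher} for $\lambda'$. So the identity is equivalent to the self-dual statement that the common value $\sum_{\gamma : \lambda \in T_2^h(\gamma)} F_\gamma$ is invariant under $\lambda \mapsto \lambda'$ — which, while suggestive, still needs to be proven, so this symmetry alone does not close the argument. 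I would instead aim for a direct computation: write each $F_\gamma$ appearing on either side via \eqref{eq:Flambda} as $\frac{(|\lambda|+2)!\,\Delta(m_\gamma)}{\prod_k m_k!}$ up to sign, where $m_\gamma$ is the degree sequence of $\gamma$, then factor out the common quantities depending only on $\lambda$ and reduce both sides to rational-function identities in the $n_i$.

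Concretely, after dividing through by $(|\lambda|+2)!$ and by $F_\lambda \cdot \frac{(|\lambda|)!^{-1}}{\text{stuff}}$, the left-hand sum should collapse to something like $\sum_i n_i(n_i+1) \cdot \text{(correction)}$ coming from the ratio $\Delta(m_\gamma)/\Delta(n_\lambda)$ and the factorial ratio $\prod_k n_k!/\prod_k m_k!$, while the right-hand sum becomes $\sum_i (n_i - n_{i+1}) \cdot \text{(correction)}$ type terms; the boundary cases (adding to create a new row of $\gamma$, or the constraint that $n_i + 2$ must not collide with $n_{i-1}$) should be exactly accounted for by the $F$-formula automatically sending the colliding or negative-entry terms to zero. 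The cleanest way to organize this may be to prove it by induction on $|\lambda|$ using the already-established Lemma~\ref{lem:Fidentityfromabove} (the $1$-differential poset identity), by relating a ``two-up'' sum to an iterated ``one-up'' sum minus correction terms, and then matching the horizontal and vertical corrections. I expect the main obstacle to be the bookkeeping of boundary/degenerate cases — precisely which added cells fail to produce a valid partition — and showing that the $h$ and $v$ versions of these exceptional contributions match up; the generic interior terms should cancel in pairs or combine cleanly via the Vandermonde ratio, but getting the edge cases right is where the real work lies.
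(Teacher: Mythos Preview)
Your proposal sketches two routes but completes neither, so as written it has a genuine gap. The direct computation via \eqref{eq:Flambda} is indeed feasible --- the paper remarks that the authors also verified the identity this way --- but you have not carried it out, and ``should collapse to something like $\sum_i n_i(n_i+1)\cdot\text{(correction)}$'' is not yet an argument: the Vandermonde ratios and the degenerate boundary cases you flag are exactly the content of such a proof, not peripheral technicalities.

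Your inductive sketch is close in spirit to the paper's actual proof, but the decisive step is missing. The paper uses the \emph{downward} branching rule $F_\gamma=\sum_{\rho\in T_1(\gamma)}F_\rho$ (Lemma~\ref{lem:basicFidentity}), not Lemma~\ref{lem:Fidentityfromabove}, to rewrite $\sum_{\gamma\in\Hor(\lambda)}F_\gamma=\sum_{\rho\in A_\lambda}F_\rho$ with $A_\lambda:=\bigcup_{\gamma\in\Hor(\lambda)}T_1(\gamma)$, and compares this to $B_\lambda:=\bigcup_{\mu\in T_1(\lambda)}\Hor(\mu)$ (and the analogous sets $C_\lambda,D_\lambda$ built from $\Ver$). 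The induction then closes via the purely set-theoretic identities
\[
A_\lambda\setminus B_\lambda=C_\lambda\setminus D_\lambda,\qquad B_\lambda\setminus A_\lambda=D_\lambda\setminus C_\lambda,
\]
which are checked by a short case analysis on the position of the added cell relative to the shape of $\lambda$. This is precisely the ``matching the horizontal and vertical corrections'' that you correctly identify as the crux but do not actually perform; without isolating and proving these equalities (or an equivalent statement), the inductive argument does not go through.
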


The result of Lemma \ref{lem:Fidentitytwohigher} may not be new and we expect that it can also be (or already has been) proven using results in representation theory. Furthermore, the authors have also proven both identities \eqref{eq:Fidentityfromabove} and \eqref{eq:Fidentitytwohigher} by direct computation using \eqref{eq:Flambda}, but have chosen to provide only a combinatorial proof of~\eqref{eq:Fidentitytwohigher} in the appendix.

Since we have now stated the necessary lemmas, we are able to prove Theorem~\ref{thm:Average}.

\begin{proof}[Proof of Theorem \ref{thm:Average}] We prove the result by induction on $n:=|\lambda|$.

 When $n=0$, we have that $\{\lambda \vdash 0\}=\{\varnothing\}$ and $\He_{\varnothing} = 1$. Furthermore, $F_\varnothing=1$, so
 \begin{gather*}\frac{F_\varnothing}{0!} \He_\varnothing=1=x^0.\end{gather*}
 For $n=1$, we have $\{\lambda \vdash 1\}=\{(1)\}$, and $F_{(1)}=1$, and $\He_{(1)}=x$, so also for $n=1$ the result is true.

 Now suppose that $n>1$ is such that the statement is true for all $k<n$. Then, using the generating recurrence relation \eqref{eq:RecurrenceRelationHe}, we obtain that
 \begin{gather}
 \frac{1}{n!}\sum_{\lambda \vdash n} F_{\lambda}^2 \cdot \He_{\lambda}(x)
 = \frac{x}{n!} \sum_{\lambda \vdash n} \sum_{\mu\in T_1(\lambda)} F_{\lambda} F_{\mu} \He_{\mu}(x)\nonumber\\
 \hphantom{\frac{1}{n!}\sum_{\lambda \vdash n} F_{\lambda}^2 \cdot \He_{\lambda}(x)=}{}
 + \frac{|\lambda|-1}{n!} \sum_{\lambda \vdash n} \sum_{\rho\in \tilde{T}_2(\lambda)} \sgn(\rho,\lambda) F_{\lambda} F_{\rho} \He_{\rho}(x).\label{eq:ProofAverage1}
 \end{gather}
 We now consider both double sums separately. By first interchanging the sums, then applying Lemma~\ref{lem:Fidentityfromabove}, and finally using the induction hypothesis, we see that
 \begin{align}
 \sum_{\lambda \vdash n} \sum_{\mu \in T_1(\lambda)} F_\lambda F_\mu \He_\mu =\sum_{\mu \vdash n-1} \sum_{\substack{\lambda \\ \mu \in T_1(\lambda)}} F_\lambda F_\mu \He_\mu
 =\sum_{\mu \vdash n-1} (\lvert \mu \rvert +1) F_\mu^2 \He_\mu
 =n! x^{n-1}. \label{eq:ProofAverage2}
 \end{align}
 Furthermore, a similar computation, but now using Lemma \ref{lem:Fidentitytwohigher}, shows that
 \begin{align}
 \sum_{\lambda \vdash n} \sum_{\rho \in \tilde{T}_2(\lambda)} \sgn(\rho,\lambda) F_\lambda F_\rho \He_\rho &= \sum_{\rho \vdash n-2} \sum_{\substack{\lambda \\ \rho \in \tilde{T}_2(\lambda)}} \sgn(\rho,\lambda) F_\lambda F_\rho \He_\rho \nonumber \\
 &=\sum_{\rho \vdash n-2} \bigg(\sum_{\substack{\lambda \\ \rho\in T^v_2(\lambda)}} F_{\lambda} - \sum_{\substack{\lambda \\ \rho\in T^h_2(\lambda)}} F_{\lambda}\bigg) F_{\rho} \He_{\rho}(x) = 0. \label{eq:ProofAverage3}
 \end{align}
 So, using \eqref{eq:ProofAverage2} and \eqref{eq:ProofAverage3} in \eqref{eq:ProofAverage1}, we obtain that
 \begin{gather*}
 \frac{1}{n!} \sum_{\lambda \vdash n} F_\lambda^2 \He_\lambda(x) = \frac{x}{n!} n! x^{n-1}=x^n,
 \end{gather*}
 which is what we wanted to show. Hence the averaging result is established.
\end{proof}

\section{Proof of Proposition \ref{prop:OmegaDer}: the derivative}\label{sectionproofderivative}

To prove the result about the derivative of Wronskian Hermite polynomials, see Proposition \ref{prop:OmegaDer}, we again first need a combinatorial lemma.

\begin{Lemma}\label{lem:lemmaforderivative} Suppose that $n\geq 3$ is an integer and let $\lambda$ be a partition of $n$. Then, for every partition $\gamma \vdash \lvert \lambda \rvert -3$, we have that
 \begin{gather}\label{eq:ResultForDerivative}
 \sum_{\substack{\rho \in \tilde{T}_2(\lambda) \\ \gamma \in T_1(\rho)}} \sgn(\rho,\lambda)
 = \sum_{\substack{\mu \in T_1(\lambda)\\ \gamma \in \tilde{T}_2(\mu)}}\sgn(\gamma,\mu),
 \end{gather}
 where the $\sgn$ function, $T_1(\lambda)$ and $\tilde{T}_2(\lambda)$ are as defined in Section~{\rm \ref{subsectionnotation}}.
\end{Lemma}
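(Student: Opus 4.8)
The plan is to localise the identity to the three‑box skew shape $\theta:=\lambda/\gamma$ and then dispatch it by a short case analysis according to the connected components of $\theta$.

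First I would observe that every $\rho$ occurring in the left‑hand sum satisfies $\gamma\le\rho\le\lambda$ (from $\rho\in\tilde{T}_2(\lambda)$ and $\gamma\in T_1(\rho)$), and likewise every $\mu$ in the right‑hand sum satisfies $\gamma\le\mu\le\lambda$; hence the deleted boxes always lie inside $\theta$, and everything is determined by the three boxes of $\theta$. Let $\mathcal P$ denote the set of dominoes (edge‑adjacent pairs of boxes) contained in $\theta$, and for $d\in\mathcal P$ write $c_d$ for the single box of $\theta\setminus d$ and $\sgn(d)=-1,+1$ according as $d$ is horizontal or vertical. The reformulation step is to show that the left‑hand side equals $\sum_{d\in\mathcal P}\sgn(d)\,\bigl[\lambda\setminus d\ \text{is a Young diagram}\bigr]$ and the right‑hand side equals $\sum_{d\in\mathcal P}\sgn(d)\,\bigl[\lambda\setminus\{c_d\}\ \text{is a Young diagram}\bigr]$. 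For the left side this uses that once $\rho:=\lambda\setminus d$ is a Young diagram it automatically contains $\gamma$ and has $\rho\setminus\{c_d\}=\gamma$, so $\gamma\in T_1(\rho)$, while the orientation of $d$ forces $\rho\in T_2^h(\lambda)\cup T_2^v(\lambda)$ with $\sgn(\rho,\lambda)=\sgn(d)$; the right side is handled symmetrically, the orientation of $\mu/\gamma=d$ dictating $\sgn(\gamma,\mu)=\sgn(d)$.

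Now I would split on the connected components of $\theta$, which (since $|\theta|=3$) form the pattern $1+1+1$, $2+1$, or $3$. If $\theta$ is three mutually non‑adjacent boxes then $\mathcal P=\varnothing$ and both sides are $0$. If $\theta$ is a domino $D$ together with an isolated box $e$, then $\mathcal P=\{D\}$, and using the elementary fact that deleting a connected component of a skew shape $\lambda/\gamma$ from $\lambda$ again yields a Young diagram, both $\lambda\setminus D$ and $\lambda\setminus\{e\}$ are Young diagrams, so each side equals $\sgn(D)$. If $\theta$ is connected it is a size‑$3$ ribbon, and the only ones that can occur as $\lambda/\gamma$ are the horizontal strip, the vertical strip, and the two ``bent'' ribbons (a horizontal domino with a box attached directly below its left box, and a horizontal domino with a box attached directly above its right box); using $\rho\in T_2^v(\lambda)\iff\rho'\in T_2^h(\lambda')$ together with $\sgn(\rho,\lambda)=-\sgn(\rho',\lambda')$ and the conjugation‑invariance of $T_1$, it suffices to treat the horizontal strip and the two bent ribbons directly. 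For the horizontal strip, among the two dominoes in $\mathcal P$ exactly the right‑hand one has $\lambda\setminus d$ a Young diagram and exactly the left‑hand one has $c_d$ a removable corner of $\lambda$; both are horizontal, so both sides equal $-1$, and the vertical strip then gives $+1$ by conjugation. For each bent ribbon $\mathcal P$ consists of one horizontal and one vertical domino; a direct check of the local row lengths shows that for one bent shape both left indicators are $0$ while both right ones are $1$, and for the other both right indicators are $0$ while both left ones are $1$ — and since the two dominoes have opposite sign, each side equals $(-1)+(+1)=0$ (or $0+0$) either way.

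The main obstacle is precisely these connected bent‑ribbon cases: there the indicators $[\lambda\setminus d\in\text{YD}]$ and $[\lambda\setminus\{c_d\}\in\text{YD}]$ differ from one domino to the other, so the two sides cannot be matched term by term and the equality emerges only after the horizontal ($-1$) and vertical ($+1$) contributions cancel. The individual verifications are short once one is careful about which deletions are legal — deleting a horizontal domino $\{(i,j),(i,j+1)\}$ is illegal whenever $(i,j+2)\in\lambda$, and deleting a single box is legal only at an outer corner — so it is the bookkeeping, not any genuine difficulty, that the proof demands.
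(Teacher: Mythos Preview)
Your proof is correct and follows essentially the same route as the paper: reduce to the case $\gamma\le\lambda$, then do a case analysis on the three-cell skew shape $\theta=\lambda/\gamma$ according to how the cells are mutually adjacent (none adjacent; one adjacent pair plus a loose cell; all three connected, splitting further into the straight and bent ribbons). Your domino/indicator reformulation and the use of the conjugation symmetry $\rho\in T_2^v(\lambda)\Leftrightarrow\rho'\in T_2^h(\lambda')$ to halve the connected subcases are tidy touches, but they do not change the structure of the argument; the paper simply treats the four connected alignments one by one and reaches the same values $c_\gamma=d_\gamma\in\{-1,0,0,1\}$.
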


The proof of the above lemma is postponed to the appendix. Indeed, this lemma is just a~combinatorial result and allows us to interchange sums. This is precisely how we are using it in the proof of Proposition \ref{prop:OmegaDer}; we prove the claim by induction and we need to interchange the sums in the induction step.

\begin{proof}[Proof of Proposition \ref{prop:OmegaDer}] We approach by induction on $n:=\lvert \lambda \rvert$.

 If $n \in \{0,1\}$, the result is trivial. For $n=2$, we either have $\lambda=(2)$, or $\lambda=(1,1)$. Note that in both cases $T_1(\lambda)=\{(1)\}$, that $F_{(1)}=F_{(1,1)}=F_{(2)}=1$, $\He_{(1)}(x)=x$, $\He_{(2)}(x)=x^2-1$ and $\He_{(1,1)}=x^2+1$. So indeed, $\He_{(2)}'=2\He_{(1)}$ and $\He_{(1,1)}'=2\He_{(1)}$, as desired.

 Now suppose that $n\geq 3$ and suppose that the claim holds for every partition $\mu$ such that $\lvert \mu \rvert < n$. Then, by the generating recurrence relation \eqref{eq:RecurrenceRelationHe}, we obtain
 \begin{gather*}
 \left(F_{\lambda} \He_{\lambda}\right)'(x) = \sum_{\mu\in T_1(\lambda)} F_{\mu} \He_{\mu}(x) + x \sum_{\mu\in T_1(\lambda)} \left(F_{\mu} \He_{\mu}\right)'(x) \\
 \hphantom{\left(F_{\lambda} \He_{\lambda}\right)'(x) =}{} + (|\lambda|-1) \sum_{\rho\in \tilde{T}_2(\lambda)} \sgn(\rho,\lambda) \left(F_{\rho} \He_{\rho}\right)'(x).
 \end{gather*}
 Now, applying the induction hypothesis to all the terms in the second and third sum, we arrive at
 \begin{gather*}
 \left(F_{\lambda} \He_{\lambda}\right)'(x) = \sum_{\mu\in T_1(\lambda)} F_{\mu} \He_{\mu}(x) + (|\lambda|-1) \bigg( x \sum_{\mu\in T_1(\lambda)} \sum_{\rho \in T_1(\mu)} F_{\rho} \He_{\rho}(x) \\
 \hphantom{\left(F_{\lambda} \He_{\lambda}\right)'(x) =}{} + (|\lambda|-2) \sum_{\rho\in \tilde{T}_2(\lambda)} \sgn(\rho,\lambda) \sum_{\gamma\in T_1(\rho)} F_{\gamma} \He_{\gamma}(x)\bigg).
 \end{gather*}
 After invoking Lemma \ref{lem:lemmaforderivative} for the last double sum and realizing that $\lvert\mu\rvert=\lvert\lambda\rvert-1$ for all $\mu \in T_1(\lambda)$, we see that
 \begin{gather*}
 \left(F_{\lambda} \He_{\lambda}\right)'(x) = \sum_{\mu\in T_1(\lambda)} F_{\mu} \He_{\mu}(x) + (|\lambda|-1) \sum_{\mu\in T_1(\lambda)} \bigg( x \sum_{\rho \in T_1(\mu)} F_{\rho} \He_{\rho}(x) \\
 \hphantom{\left(F_{\lambda} \He_{\lambda}\right)'(x) =}{} + (|\mu|-1) \sum_{\gamma\in \tilde{T}_2(\mu)} \sgn(\gamma,\mu) F_{\gamma} \He_{\gamma}(x)\bigg).
 \end{gather*}
 Hence, after applying the generating recurrence relation \eqref{eq:RecurrenceRelationHe} to all $\mu \in T_1(\lambda)$, we find
 \begin{gather*}(F_\lambda \He_\lambda)'=\lvert \lambda \rvert \sum_{\mu \in T_1(\lambda)} F_\mu \He_\mu,\end{gather*}
 which concludes the induction step and hence the proof.
\end{proof}

\appendix

 \section{Combinatorial lemmas}\label{appendixcombinatoriallemmas}

 \subsection{Proof of Lemma \ref{lem:lemmafortopdown}}

 \begin{proof}[Proof of Lemma \ref{lem:lemmafortopdown}] We make the distinction between $\mu\notin T_1(\lambda)$ and $\mu\in T_1(\lambda)$, and show that \eqref{eq:lemmafortopdown} holds in both cases.

 Let us first assume that $\mu\notin T_1(\lambda)$. Then suppose that the left hand side of \eqref{eq:lemmafortopdown} is non-zero. Then there exists a partition $\rho \vdash |\lambda|+1$ such that $\lambda \in T_1(\rho)$ and $\mu \in \tilde{T}_2(\rho)$. Consider the Young diagram of $\rho$ and mark the two cells that are not in the Young diagram of $\mu$ with an $A$. By definition of~$\mu \in \tilde{T}_2(\rho)$, these two cells are adjacent. Likewise, mark the unique cell in the Young diagram of $\rho$ that is not in the Young diagram of~$\lambda$ with a~$B$, see Fig.~\ref{fig:case1} for an example.
 \begin{figure}[h!]
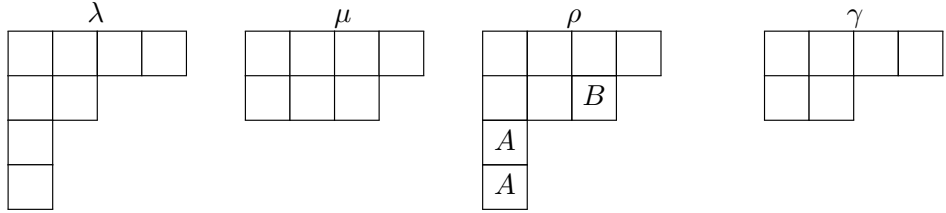

 \centering
 $\lambda$ \hspace{2.8cm} $\mu$ \hspace{2.6cm} $\rho$\hspace{3.4cm} $\gamma$ \\
 \ydiagram[*(white)]{4,2,1,1} \qquad
 \ydiagram[*(white)]{4,3} \qquad
 \ydiagram[A]{4+0,2+0,0+1,0+1}*[B]{4+0,2+1,1+0,1+0}*[*(white)]{4,2} \qquad
 \ydiagram[*(white)]{4,2}
 \caption{An illustration of the situation when $\mu\notin T_1(\lambda)$ and the left hand side of \eqref{eq:lemmafortopdown} is non-zero.}
 \label{fig:case1}
 \end{figure}

 Since $\mu\notin T_1(\lambda)$, there are now three marked cells in the Young diagram of $\rho$, i.e., the cell marked with a $B$ does not coincide with either one the $A$-cells. Furthermore, since $\lambda$ and $\mu$ are both partitions, the $B$-cell is not adjacent to either one of the $A$-cells. Therefore, removing all three marked cells in the Young diagram of $\rho$ yield a partition $\gamma\vdash |\lambda|-2$, and in fact $\gamma \in \tilde{T}_2(\lambda) \cap T_1(\mu)$, as illustrated in Fig.~\ref{fig:case1}. Furthermore, it is easy to see that $\rho$ is the only partition contributing to the left hand side of \eqref{eq:lemmafortopdown} and $\gamma$ is the only partition contributing to the right-hand side of \eqref{eq:lemmafortopdown}. Since $\sgn(\mu,\rho)=\sgn(\gamma,\lambda)$, see~\eqref{eq:sgn}, we obtain that \eqref{eq:lemmafortopdown} holds in this case. A similar argument shows that \eqref{eq:lemmafortopdown} also holds when its right hand side is non-zero and \eqref{eq:lemmafortopdown} trivially holds when both sides are zero. This establishes \eqref{eq:lemmafortopdown} if $\mu \not\in T_1(\lambda)$.

 If $\mu\in T_1(\lambda)$, there is precisely one cell in the Young diagram of $\lambda$ that is not in the Young diagram of $\mu$; mark this cell with the colour gray. We argue that \eqref{eq:lemmafortopdown} holds by making the distinction whether a cell can be added directly to the right and/or directly below the gray cell in the Young diagram of $\lambda$. Since $\mu$ is a partition, there are four cases which we treat separately.
 \begin{enumerate}\itemsep=0pt
 \item
 If a cell can be added both to the right and below the gray cell, then we are in the situation as sketched in Fig.~\ref{fig:case2}. We directly see that $\tilde{T}_2(\lambda) \cap T_1(\mu)$ is empty so the right hand side of~\eqref{eq:lemmafortopdown} is~0. Moreover, we also find that there are precisely two partitions~$\rho$ and~$\bar{\rho}$ contributing to the left hand side of~\eqref{eq:lemmafortopdown}; $\rho$~is obtained by adding a cell to the right of the gray cell and~$\bar{\rho}$ by adding a cell below the gray cell. We then have that $\sgn(\mu,\rho)=-1$ and $\sgn(\mu,\bar{\rho})=1$, so the left hand side of~\eqref{eq:lemmafortopdown} is 0 too.
 \begin{figure}[h]
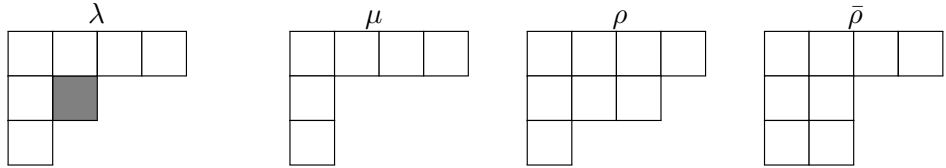

 \centering
 $\lambda$ \hspace{3.2cm} $\mu$ \hspace{2.8cm} $\rho$\hspace{2.8cm} $\bar{\rho}$ \\
 \ydiagram[*(white)]{4,1,1}*[*(gray)]{4+0,1+1,1+0} \qquad
 \ydiagram[*(white)]{4,1,1} \qquad
 \ydiagram[*(white)]{4,3,1} \qquad
 \ydiagram[*(white)]{4,2,2}
 \caption{An illustration of the situation when $\mu\in T_1(\lambda)$ and a cell can be added both to the right and below the gray cell.}
 \label{fig:case2}
 \end{figure}

 \item
 If no cell can be added to the right nor below the gray cell in the Young diagram of $\lambda$, then a similar analysis as in the first case shows that the left hand side of \eqref{eq:lemmafortopdown} is an empty sum, whereas the right hand sum equals zero. See Fig.~\ref{fig:case3} for an example.
 \begin{figure}[h]
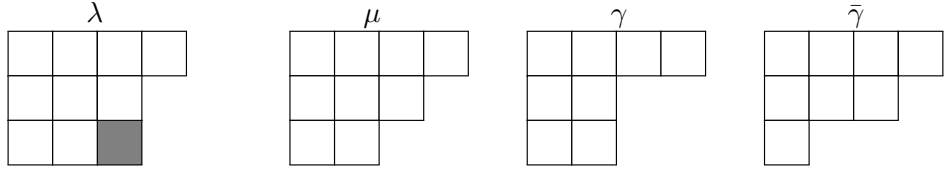

 \centering
 $\lambda$ \hspace{3.2cm} $\mu$ \hspace{2.8cm} $\gamma$\hspace{2.8cm} $\bar{\gamma}$ \\
 \ydiagram[*(white)]{4,3,2}*[*(gray)]{4+0,3+0,2+1} \qquad
 \ydiagram[*(white)]{4,3,2} \qquad
 \ydiagram[*(white)]{4,2,2} \qquad
 \ydiagram[*(white)]{4,3,1}
 \caption{An illustration of the situation when $\mu\in T_1(\lambda)$ and no cell can be added both to the right nor below the gray cell.}
 \label{fig:case3}
 \end{figure}

 \item
 If a cell can be added to the right but not below the gray cell, then both sums in \eqref{eq:lemmafortopdown} have precisely one term, and both are equal to~$-1$. See Fig.~\ref{fig:case4} for an example.
 \begin{figure}[h]
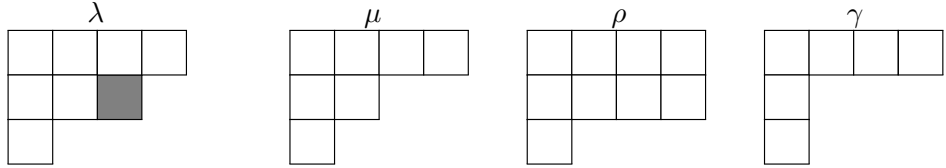

 \centering
 $\lambda$ \hspace{3.2cm} $\mu$ \hspace{2.8cm} $\rho$\hspace{2.8cm} $\gamma$ \\
 \ydiagram[*(white)]{4,2,1}*[*(gray)]{4+0,2+1,1+0} \qquad
 \ydiagram[*(white)]{4,2,1} \qquad
 \ydiagram[*(white)]{4,4,1} \qquad
 \ydiagram[*(white)]{4,1,1}
 \caption{An illustration of the situation when $\mu\in T_1(\lambda)$ and a cell can be added to the right, but not below the gray cell.}
 \label{fig:case4}
 \end{figure}

 \item
 The last case is when a cell can be added below, but not to the right of the gray cell. The same argument as in the third case can be used, but now both sides of~\eqref{eq:lemmafortopdown} are equal to~1, see Fig.~\ref{fig:case5} for an example.
 \begin{figure}[h!]
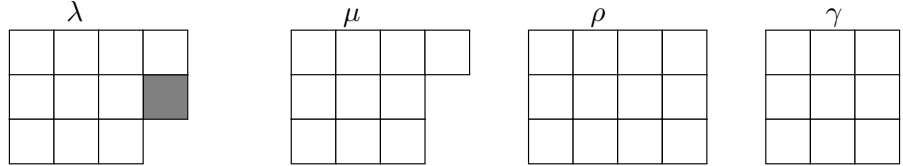

 \centering
 $\lambda$ \hspace{3.2cm} $\mu$ \hspace{2.8cm} $\rho$\hspace{2.8cm} $\gamma$ \\
 \ydiagram[*(white)]{4,3,3}*[*(gray)]{4+0,3+1,3+0} \qquad
 \ydiagram[*(white)]{4,3,3} \qquad
 \ydiagram[*(white)]{4,4,4} \qquad
 \ydiagram[*(white)]{3,3,3}
 \caption{An illustration of the situation when $\mu\in T_1(\lambda)$ and no cell can be added to the right, but there can be added a cell below the gray cell.}
 \label{fig:case5}
 \end{figure}
 \end{enumerate}
 So \eqref{eq:lemmafortopdown} also holds when $\mu\in T_1(\lambda)$. This concludes the proof.
 \end{proof}

 \subsection{Proof of Lemma \ref{lem:Fidentitytwohigher}}
 For the proof of Lemma \ref{lem:Fidentitytwohigher}, we use some new notation. For any partition $\lambda$, we define the sets $\Hor(\lambda)$ and $\Ver(\lambda)$, by
 \begin{gather*}\label{eq:defHor&Ver}
 \rho\in \Hor(\lambda) \Leftrightarrow \lambda \in T_2^h(\rho), \qquad \gamma \in \Ver(\lambda) \Leftrightarrow \lambda \in T_2^v(\gamma).
 \end{gather*}
 Indeed, in this notation, the result we prove in this section is that for any partition $\lambda$, we have
 \begin{gather}\label{eq:HorVeridentity}
 \sum_{\rho \in \Hor(\lambda)} F_\rho =\sum_{\gamma \in \Ver(\lambda)} F_\gamma.
 \end{gather}

 We prove this identity by induction on $n:=|\lambda|$. To be able to do this, we use a very basic identity.

 \begin{Lemma}\label{lem:basicFidentity}
 Suppose that $\lambda$ is a partition of $n\geq 1$. Then
 \begin{gather*}F_\lambda=\sum_{\mu\in T_1(\lambda)} F_\mu.\end{gather*}
 \end{Lemma}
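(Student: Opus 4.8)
The plan is to give a bijective proof using the combinatorial interpretation of $F_\lambda$ as the number of standard Young tableaux of shape $\lambda$. The key observation is that in any standard Young tableau $T$ of shape $\lambda\vdash n$, the entry $n$ must occupy a removable (corner) cell: since the entries strictly increase along rows and down columns and $n$ is the largest entry, the cell of $T$ containing $n$ can have no cell to its right and no cell below it in the Young diagram of $\lambda$. Deleting that cell from $T$ therefore yields a standard Young tableau of some shape $\mu$ obtained from $\lambda$ by removing a single corner cell, and such $\mu$ are exactly the elements of $T_1(\lambda)$ (a subdiagram of $\lambda$ with one cell fewer).

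Next I would exhibit the inverse map. Given $\mu\in T_1(\lambda)$ and a standard Young tableau $S$ of shape $\mu$, there is a unique cell of the Young diagram of $\lambda$ not belonging to $\mu$; filling it with the entry $n$ produces a filling of shape $\lambda$, and this filling is again standard. The only point to verify is the increasing condition at the new cell: since $\mu$ and $\lambda$ agree in every row except the one in which the cell is added, the neighbour directly above the new cell (if it exists) and the neighbour directly to its left (if it exists) already lie in $\mu$, hence carry entries strictly smaller than $n$. These two maps are mutually inverse, so they set up a bijection between standard Young tableaux of shape $\lambda$ and the disjoint union, over $\mu\in T_1(\lambda)$, of standard Young tableaux of shape $\mu$. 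Comparing cardinalities gives $F_\lambda=\sum_{\mu\in T_1(\lambda)}F_\mu$; the case $n=1$ is covered by the convention $F_\varnothing=1$.

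I do not expect any genuine obstacle: this is precisely the edge-counting recursion in Young's lattice, and the argument above is elementary. If one prefers an algebraic route instead, one may substitute the explicit formula~\eqref{eq:Flambda} into both sides and verify the identity by a direct manipulation of the degree sequences of $\lambda$ and of the $\mu\in T_1(\lambda)$; the resulting computation is routine but noticeably longer, so the bijective proof is the one I would present.
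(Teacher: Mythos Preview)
Your proof is correct and follows exactly the same bijective argument as the paper: partition the standard Young tableaux of shape $\lambda$ according to the position of the entry $n$, observe that this cell is a removable corner, and note that deleting it gives a bijection with $\bigcup_{\mu\in T_1(\lambda)}\{\text{SYT of shape }\mu\}$. You simply spell out the verification of the inverse map in more detail than the paper does.
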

 \begin{proof}
 $F_\lambda$ is the number of standard Young tableaux of shape $\lambda$. We partition the whole set of such standard Young tableaux according to the position of the number $n$. Indeed, removing the cell marked with $n$ yields a standard Young tableau of an element of $T_1(\lambda)$, and in fact this map is bijective.
 \end{proof}

 Next, for any partition $\lambda$, we study the sets
 \begin{alignat}{3}
& A_\lambda = \bigcup_{\gamma\in \Hor(\lambda)} T_1(\gamma), \qquad && B_\lambda = \bigcup_{\mu \in T_1(\lambda)} \Hor(\mu),& \nonumber\\
& C_\lambda = \bigcup_{\gamma\in \Ver(\lambda)} T_1(\gamma), \qquad && D_\lambda = \bigcup_{\mu \in T_1(\lambda)} \Ver(\mu).& \label{eq:ABCD}
 \end{alignat}
 In particular, $A_\lambda$, $B_\lambda$, $C_\lambda$ and $D_\lambda$ are all sets of partitions of $\lvert \lambda \rvert +1$. We need the following properties of these sets in the proof of~\eqref{eq:HorVeridentity}.

 \begin{Lemma}\label{lem:propertiesABCD}
 Suppose that $\lambda$ is a partition. Then the following three statements hold:
 \begin{enumerate}\itemsep=0pt
 \item[$1.$] the defining expressions for $A_\lambda$, $B_\lambda$, $C_\lambda$ and $D_\lambda$ in \eqref{eq:ABCD} are disjoint unions,
 \item[$2.$] $A_\lambda \setminus B_\lambda = C_\lambda \setminus D_\lambda$,
 \item[$3.$] $B_\lambda \setminus A_\lambda = D_\lambda \setminus C_\lambda$.
 \end{enumerate}
 \end{Lemma}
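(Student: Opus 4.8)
The plan is to argue entirely on the level of Young diagrams, using the conjugation relation \eqref{eq:verticalhorizontalconjugate} to halve the work. Throughout, for partitions $\nu,\lambda$ I write $\nu\setminus\lambda$, $\lambda\cup\nu$, $\lambda\cap\nu$ for the corresponding operations on the underlying sets of cells; recall that $\lambda\cup\nu$ and $\lambda\cap\nu$ are again partitions, with $|\lambda\cup\nu|+|\lambda\cap\nu|=|\lambda|+|\nu|$.

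For the first statement it is enough to show that $T_1(\gamma)\cap T_1(\gamma')=\varnothing$ for distinct $\gamma,\gamma'\in\Hor(\lambda)$ and that $\Hor(\mu)\cap\Hor(\mu')=\varnothing$ for distinct $\mu,\mu'\in T_1(\lambda)$; the statements for $C_\lambda$ and $D_\lambda$ then follow by conjugating via \eqref{eq:verticalhorizontalconjugate}. If $\gamma,\gamma'$ arise from $\lambda$ by adding a horizontal domino in two different rows, then $\gamma\cap\gamma'=\lambda$, so any common element of $T_1(\gamma)$ and $T_1(\gamma')$ would be contained in $\lambda$, which is impossible since it has $|\lambda|+1$ cells. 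Dually, if $\mu,\mu'$ arise from $\lambda$ by deleting a cell from two different rows, then $\mu\cup\mu'=\lambda$; hence any $\nu\in\Hor(\mu)\cap\Hor(\mu')$ contains $\lambda$ with $\nu\setminus\lambda$ a single cell, and the requirement that both $\nu\setminus\mu$ and $\nu\setminus\mu'$ be horizontal dominoes forces that single cell to lie in the same row as the cell of $\lambda\setminus\mu$ and in the same row as the cell of $\lambda\setminus\mu'$, a contradiction.

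For the second and third statements I would first record the identities $(A_\lambda)'=C_{\lambda'}$ and $(B_\lambda)'=D_{\lambda'}$, which are immediate from \eqref{eq:verticalhorizontalconjugate} and the fact that conjugation is an automorphism of the partial order, hence commutes with $T_1$. Consequently $C_\lambda\setminus D_\lambda=(A_{\lambda'}\setminus B_{\lambda'})'$ and $D_\lambda\setminus C_\lambda=(B_{\lambda'}\setminus A_{\lambda'})'$, so it suffices to prove the two inclusions $A_\lambda\setminus B_\lambda\subseteq(A_{\lambda'}\setminus B_{\lambda'})'$ and $B_\lambda\setminus A_\lambda\subseteq(B_{\lambda'}\setminus A_{\lambda'})'$ for every $\lambda$, the reverse inclusions being obtained by applying these to $\lambda'$ and conjugating. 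To make the inclusions checkable I would establish explicit descriptions of $A_\lambda$ and $B_\lambda$. For $\nu$ with $|\nu|=|\lambda|+1$ (so $|\nu\setminus\lambda|\geq 1$): if $|\nu\setminus\lambda|\geq 3$ then $\nu\notin A_\lambda\cup B_\lambda$ (the would-be witness $\gamma$, resp.\ $\mu$, has the wrong cardinality); if $|\nu\setminus\lambda|=2$ then the witness must be $\lambda\cup\nu$, resp.\ $\lambda\cap\nu$, so $\nu\in A_\lambda\iff\nu\setminus\lambda$ is a horizontal domino $\iff\nu\in B_\lambda$; and if $\lambda\leq\nu$, with $\nu$ obtained from $\lambda$ by adding a cell in row $i$, then tracing the witness one gets $\nu\in A_\lambda\iff\lambda_{i-1}\geq\lambda_i+2$ while $\nu\in B_\lambda\iff\lambda_{i-1}\geq\lambda_i+1$ and $\lambda_i>\lambda_{i+1}$. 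Thus $A_\lambda$ and $B_\lambda$ coincide outside $\{\nu:\lambda\leq\nu\}$, and comparing conditions there shows that $A_\lambda\setminus B_\lambda$ consists exactly of the partitions obtained from $\lambda$ by adding a cell in a row $i$ with $\lambda_{i-1}\geq\lambda_i+2$ and $\lambda_i=\lambda_{i+1}$, while $B_\lambda\setminus A_\lambda$ consists of those obtained by adding a cell in a row $i$ with $\lambda_{i-1}=\lambda_i+1$ and $\lambda_i>\lambda_{i+1}$.

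With these descriptions in hand, the inclusion $A_\lambda\setminus B_\lambda\subseteq(A_{\lambda'}\setminus B_{\lambda'})'$ reduces to a short computation in conjugate coordinates: if $\nu$ is obtained from $\lambda$ by adding the cell $(i,\lambda_i+1)$ with $\lambda_{i-1}\geq\lambda_i+2$ and $\lambda_i=\lambda_{i+1}$, then setting $j:=\lambda_i+1$ one checks $\lambda'_j=i-1$, $\lambda'_{j-1}\geq i+1=\lambda'_j+2$ and $\lambda'_{j+1}=i-1=\lambda'_j$, and that $\nu'$ is obtained from $\lambda'$ by adding the cell $(j,\lambda'_j+1)$; hence $\nu'\in A_{\lambda'}\setminus B_{\lambda'}$, as required. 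The inclusion for $B_\lambda\setminus A_\lambda$ is handled by the same bookkeeping. I expect the main obstacle to be the case analysis yielding the explicit descriptions of $A_\lambda$ and $B_\lambda$ — in particular the verification that they agree on every $\nu$ with $|\nu\setminus\lambda|=2$, and the careful treatment of the boundary conventions (an empty row $0$, empty rows below $\lambda$) in the case $\lambda\leq\nu$; everything else is a one-line diagram argument or a routine substitution.
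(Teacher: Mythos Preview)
Your proof is correct, but it takes a somewhat different route from the paper's. The paper argues directly and pictorially: after observing (as you do) that any $\rho\in A_\lambda\setminus B_\lambda$ must satisfy $\lambda\in T_1(\rho)$, it characterises such $\rho$ by the single geometric condition ``a cell can be added both to the right and below the extra cell of $\rho\setminus\lambda$'', and then checks in one line that the \emph{same} condition characterises $C_\lambda\setminus D_\lambda$; item~3 is handled by the complementary condition ``no cell can be added to the right nor below''. You instead use the conjugation relation \eqref{eq:verticalhorizontalconjugate} to reduce items~2 and~3 to inclusions of the form $A_\lambda\setminus B_\lambda\subseteq (A_{\lambda'}\setminus B_{\lambda'})'$, and to verify these you give an explicit algebraic description of $A_\lambda$ and $B_\lambda$ by a case split on $|\nu\setminus\lambda|$. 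The two descriptions coincide: your conditions $\lambda_{i-1}\geq\lambda_i+2$ and $\lambda_i=\lambda_{i+1}$ are exactly the paper's ``can add to the right'' and ``can add below''. What your approach buys is that you never have to touch $C_\lambda$ or $D_\lambda$ directly (conjugation does that work for you), and your case analysis on $|\nu\setminus\lambda|$ makes the claim ``$A_\lambda$ and $B_\lambda$ agree outside $\{\nu:\lambda\leq\nu\}$'' fully explicit; what the paper's approach buys is brevity, since it avoids the conjugate-coordinate bookkeeping and the separate treatment of the $|\nu\setminus\lambda|=2$ case. Your stated concern about boundary conventions is the only place requiring care, and it works out with the natural choice $\lambda_0=+\infty$.
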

 \begin{proof}
 The first item is straightforward from definitions.

 For the second item, note that if $\rho$ is an element of $A_\lambda$ and $\lambda \not\in T_1(\rho)$, then $\rho\in B_\lambda$ too. Therefore, if $\rho \in A_\lambda \setminus B_\lambda$, we certainly have that $\lambda \in T_1(\rho)$. Now consider the Young diagram of $\rho$ and colour the (unique) cell that is not in $\lambda$. Since $\rho \in A_\lambda$ but $\rho \not\in B_\lambda$, we are able to add a cell to the right of the coloured cell and below the coloured cell, like in the example indicated in Fig.~\ref{fig:AwithoutB}. With the same reasoning it is easy to see that the elements of $C_\lambda \setminus D_\lambda$ are also given by this same condition, hence the second item holds.

\begin{figure}[t]
 \centering
 \ydiagram[*(gray)]{4+0,2+1,2+0,1+0}*[*(white)]{4,2,2,1}
 \caption{An example that indicates an element $\rho \in A_\lambda \setminus B_\lambda$.} \label{fig:AwithoutB}
 \vspace{0.25cm}
 \ydiagram[*(gray)]{5+0,3+0,2+1,1+0}*[*(white)]{5,3,2,1}
 \caption{An example that indicates an element $\rho \in B_\lambda \setminus A_\lambda$. } \label{fig:BwithoutA}
 \end{figure}

 For the third item, note that the elements of $B_\lambda\setminus A_\lambda$ have to be partitions $\rho$ such that $\lambda \in T_1(\rho)$. Again, we colour the unique cell in the Young diagram of $\rho$ that is not in the Young diagram of $\lambda$. Since $\rho \in B_\lambda$ and $\rho \not\in A_\lambda$, we must then have that no cell can be added to the right of the coloured cell and neither a cell below the coloured cell, like in the example in Fig.~\ref{fig:BwithoutA}. It is easy to check that the elements of $D_\lambda \setminus C_\lambda$ have the same property; hence this establishes the third item.
 \end{proof}

We can now turn to the proof of Lemma \ref{lem:Fidentitytwohigher}. Recall that we are actually proving the equivalent statement \eqref{eq:HorVeridentity}. For simplicity, we write $A$ for $A_\lambda$, $B$ for $B_\lambda$, and so on.

 \begin{proof}[Proof of Lemma \ref{lem:Fidentitytwohigher}] We prove this by induction on $n:=\lvert \lambda \rvert$.

 When $n=0$, then $\lambda=\varnothing$, $\Hor(\lambda)=\{(2)\}$ and $\Ver(\lambda)=\{(1,1)\}$, and $F_{(2)}=F_{(1,1)}=1$, which establishes the required identity.

 Now suppose that we have proven the result for all partitions $\mu$ such that $\lvert \mu \rvert <n$. Then in particular, for all $\mu\in T_1(\lambda)$, we know that
 \begin{gather*}\sum_{\gamma \in \Hor(\mu)} F_\gamma= \sum_{\gamma \in \Ver(\mu)} F_\gamma.\end{gather*}
 Using this, we can now prove the induction step. By first using Lemma \ref{lem:basicFidentity}, then the first item of Lemma \ref{lem:propertiesABCD}, some basic set theory and again the first item of Lemma~\ref{lem:propertiesABCD}, we see that
 \begin{align}
 \sum_{\gamma \in \Hor(\lambda)} F_\gamma &= \sum_{\gamma \in \Hor(\lambda)} \sum_{\rho \in T_1(\gamma)} F_\rho = \sum_{\rho \in A} F_\rho
 =\sum_{\rho \in A\setminus B} F_\rho -\sum_{\rho \in B\setminus A} F_\rho + \sum_{\rho \in B} F_\rho \nonumber\\
 &=\sum_{\rho \in A\setminus B} F_\rho -\sum_{\rho \in B\setminus A} F_\rho + \sum_{\mu \in T_1(\lambda)} \sum_{\rho \in \Hor(\mu)} F_\rho. \label{eq:Horrewritten}
 \end{align}
 In a similar fashion, it follows that
 \begin{gather}\label{eq:Verrewritten}
 \sum_{\gamma \in \Ver(\lambda)} F_\gamma =\sum_{\rho \in C\setminus D} F_\rho -\sum_{\rho \in D\setminus C} F_\rho + \sum_{\mu \in T_1(\lambda)} \sum_{\rho \in \Ver(\mu)} F_\rho.
 \end{gather}

 We see that \eqref{eq:Horrewritten} and \eqref{eq:Verrewritten} coincide, by using Lemma \ref{lem:propertiesABCD} for the first two sums and the induction hypothesis for the last sum. Hence we have established \eqref{eq:HorVeridentity}.
 \end{proof}

 \subsection{Proof of Lemma \ref{lem:lemmaforderivative}}

 \begin{proof}[Proof of Lemma \ref{lem:lemmaforderivative}]
 Fix a partition $\lambda$. Let $\gamma\vdash |\lambda|-3$. Take $k=3$ in definition \eqref{eq:Tk}. Then, $\gamma\notin T_3(\lambda)$ or $\gamma\in T_3(\lambda)$.

 Clearly, if $\gamma\notin T_3(\lambda)$, then both sides of \eqref{eq:ResultForDerivative} are empty sums. If $\gamma\in T_3(\lambda)$, write the left hand side and right hand side of \eqref{eq:ResultForDerivative} as
 \begin{gather*}
 c_{\gamma}= \sum_{ \substack{\rho\in \tilde{T}_2(\lambda) \\ \gamma\in T_1(\gamma)}} \sgn(\rho,\lambda), \qquad
 d_{\gamma}= \sum_{ \substack{\mu\in T_1(\lambda) \\ \gamma\in \tilde{T}_2(\mu)}} \sgn(\rho,\lambda).
 \end{gather*}
 By definition \eqref{eq:sgn}, both values $c_{\gamma}$ and $d_{\gamma}$ are integers. Our goal is now to prove that $c_\gamma=d_\gamma$, since then~\eqref{eq:ResultForDerivative} is established. We split up in several cases. For this, suppose that $\gamma \in T_3(\lambda)$ and consider the Young diagrams of $\lambda$ and $\gamma$; there are three cells of the Young diagram of $\lambda$ that are not in the Young diagram of $\gamma$ and the rest of the cells is precisely the Young diagram of $\gamma$. We argue by how these three cells are positioned with respect to each other.

 \begin{enumerate}\itemsep=0pt
 \item If all three are pairwise non-adjacent (either vertically or horizontally), then both sums are empty and hence we have $c_\gamma=d_\gamma=0$.

 \item If two of the cells are adjacent and the third one is not adjacent to either one of these two cells, then we label the adjacent pair by $A$ and $B$, and the third loose cell with $C$. If $A$ and $B$ are horizontally adjacent, then $c_\gamma=d_\gamma=-1$. This can be seen by observing that the only $\rho\in \tilde{T}_2(\lambda)$ such that $\gamma \in T_1(\rho)$ is precisely the one by removing cells $A$ and $B$ from $\lambda$. Likewise, the only $\mu\in T_1(\lambda)$ such that $\gamma \in \tilde{T}_2(\mu)$ is precisely the one by removing cell $C$ from $\lambda$. Similarly, if $A$ and $B$ are vertically adjacent, then $c_\gamma=d_\gamma=1$.

 \item The most elaborate case is the case where the three cells form an adjacent group of cells in the Young diagram of $\lambda$. We then have that there is one cell (labeled by $B$) that is adjacent to both the other cells (labeled by $A$ and $C$) and $A$ and $C$ are not adjacent. Since both $\lambda$ and $\gamma$ are partitions, there are now four cases (up to interchanging the labels $A$ and $C$) for the alignment of $A$, $B$ and $C$ in the Young diagram, that we treat case by case.

{\bf Alignment 1} \ \begin{ytableau} A & B & C \end{ytableau} \vspace{0.15cm}

 There is precisely one $\rho \in \tilde{T}_2(\lambda)$ such that $\gamma\in T_1(\rho)$; indeed, the one obtained by remo\-ving~$B$ and $C$ together from the Young diagram of~$\lambda$. Therefore $c_\gamma=-1$. Likewise, $d_\gamma=-1$.

{\bf Alignment 2} \ \begin{ytableau} \none & C \\ A & B \end{ytableau} \vspace{0.15cm}

 There are now two elements $\rho \in \tilde{T}_2(\lambda)$ such that $\gamma \in T_1(\rho)$; indeed, the one obtained by removing $A$ and $B$
 and the one obtained by removing $B$ and $C$. Note that we therefore have that $c_\gamma=1-1=0$. Furthermore, there is no $\mu \in T_1(\lambda)$ such that $\gamma \in \tilde{T}_2(\mu)$, so $d_\gamma=0$ too.

{\bf Alignment 3} \ \begin{ytableau} B & C \\ A \end{ytableau} \vspace{0.15cm}

 This is similar to the second alignment case. We now have that $c_\gamma=0$ and $d_\gamma=1-1=0$ too.

{\bf Alignment 4} \ \begin{ytableau} A \\ B \\ C \end{ytableau} \vspace{0.15cm}

 This is very similar to the first alignment case; we now have that $c_\gamma=d_\gamma=1$.
 \end{enumerate}

 So, we see that in all cases we have that $c_\gamma=d_\gamma$, and that concludes the proof.
 \end{proof}

\subsection*{Acknowledgements}
We thank Arno Kuijlaars and Walter Van Assche for fruitful discussions and a careful reading of a preliminary version of this article. The authors are supported in part by the long term structural funding-Methusalem grant of the Flemish Government, and by EOS project 30889451 of the Flemish Science Foundation (FWO). Marco Stevens is also supported by the Belgian Interuniversity Attraction Pole P07/18, and by FWO research grant G.0864.16.

\pdfbookmark[1]{References}{ref}
\LastPageEnding

\end{document}